
\documentclass[10pt,reqno]{amsart}%
\usepackage[english, activeacute]{babel}
\usepackage{epsfig}
\usepackage{amssymb}
\usepackage{latexsym}
\usepackage{amsmath}
\usepackage{amsthm}
\usepackage{amsfonts}
\usepackage{graphicx}%
\setcounter{MaxMatrixCols}{30}
\makeatletter \numberwithin{equation}{section}
\newcommand{\equ}[1]{(\ref{#1})}

\providecommand{\abs}[1]{\lvert#1 \rvert}
\providecommand{\norm}[1]{\lVert#1 \rVert}

\newcommand{\ve}{\varepsilon}
\theoremstyle{plain}
\newtheorem{claim}{Claim}

\newtheorem{teo}{Theorem}
\newtheorem{prop}{Proposition}[section]

\newtheorem{lema}{Lemma}[section]
\newtheorem{remark}{Remark}[section]

\theoremstyle{definition}

\begin{document}
\date{}
\title[Singular limits for Bi-Laplacian operator in $\mathbb{R}^{4}$ ]{Singular limits for the Bi-Laplacian operator with exponential nonlinearity in
$\mathbb{R}^{4}$}
\author{M\'onica Clapp}
\address{Instituto de Matem\'{a}ticas, Universidad Nacional Aut\'{o}noma de M\'{e}xico,
Circuito Exterior, C.U., 04510 M\'{e}xico DF, M\'{e}xico}
\email{mclapp@matem.unam.mx}
\author{Claudio Mu\~noz}
\address{Departamento de Ingenier\'ia Matem\'atica, Universidad de Chile, Casilla 170,
Correo 3, Santiago, Chile.}
\email{cmunoz@dim.uchile.cl }
\author{Monica Musso}
\address{Departamento de Matematica, Pontificia Universidad Catolica de
Chile, Avenida Vicuna Mackenna 4860, Macul, Santiago, Chile and
Dipartimento di Matematica, Politecnico di Torino, Corso Duca degli
Abruzzi, 24 -- 10129 Torino, Italy.} \email{mmusso@mat.puc.cl}

\begin{abstract}
Let $\Omega$ be a bounded smooth domain in $\mathbb{R}^{4}$ such that for some
integer $d\geq1$ its $d$-th singular cohomology group with coefficients in
some field is not zero, then problem
\[
\quad%
\begin{cases}
\Delta^{2}u-\rho^{4}k(x)e^{u}=0 & \hbox{ in }\Omega,\\
u=\Delta u=0 & \hbox{ on }\partial\Omega,
\end{cases}
\]
has a solution blowing-up, as $\rho\rightarrow0$, at $m$ points of $\Omega$,
for any given number $m$.

\end{abstract}
\maketitle




\section{Introduction and statement of main results}

\noindent Let $\Omega$ be a bounded and smooth domain in $\mathbb{R}^{4}$. We
are interested in studying existence and qualitative properties of positive
solutions to the following boundary value problem
\begin{equation}
\quad%
\begin{cases}
\Delta^{2}u-\rho^{4}k(x)e^{u}=0 & \hbox{ in }\Omega,\\
u=\Delta u=0 & \hbox{ on }\partial\Omega,
\end{cases}
\label{In1}%
\end{equation}
where $k\in C^{2}(\bar{\Omega})$ is a non-negative, not identically zero
function, and $\rho>0$ is a small, positive parameter which tends to $0$.

In a four-dimensional manifold, this type of equations and similar ones arise
from the problem of prescribing the so-called $Q$-curvature, which was
introduced in \cite{6}. More precisely, given $(M,g)$ a four-dimensional
Riemannian manifold, the problem consists in finding a conformal metric
$\tilde{g}$ for which the corresponding $Q$-curvature $Q_{\tilde{g}}$ is
a-priori prescribed. The $Q$-curvature for the metric $g$ is defined as
\[
Q_{g}=-{\frac{1}{2}}\left(  \Delta_{g}R_{g}-R_{g}^{2}+3|{\mbox {Ric}}_{g}%
|^{2}\right)  ,
\]
where $R_{g}$ is the scalar curvature and ${\mbox {Ric}}_{g}$ is the Ricci
tensor of $(M,g)$. Writing $\tilde{g}=e^{2w}g$, the problem reduces to finding
a scalar function $w$ which satisfies
\begin{equation}
P_{g}w+2Q_{g}=2Q_{\tilde{g}}e^{4w}, \label{pane}%
\end{equation}
where $P_{g}$ is the Paneitz operator \cite{38,cy1} defined as
\[
P_{g}w=\Delta_{g}^{2} w+div\left(  {\frac{2}{3}}R_{g}g-2{\mbox {Ric}}%
_{g}\right)  dw.
\]
Problem (\ref{pane}) is thus an elliptic fourth-order partial differential
equation with exponential non-linearity. Several results are already known for
this problem \cite{c1,cy1} and related ones \cite{ARS,DR,MS}. When the metric
$g$ is not Riemannian, the problem has been recently treated by Djadli and
Malchiodi in \cite{DM} via variational methods.

In the special case where the manifold is the Euclidean space and $g$ is the
Euclidean metric, we recover the equation in (\ref{In1}), since (\ref{pane})
takes the simplified form
\[
\Delta^{2}w-2Qe^{4w}=0.
\]

Problem (\ref{In1}) has a variational structure. Indeed, solutions of
(\ref{In1}) correspond to critical points in $H^{2}(\Omega)\cap H_{0}%
^{1}(\Omega)$ of the following energy functional
\[
J_{\rho}(u)={\frac{1}{2}}\int_{\Omega}|\Delta u|^{2}-\rho^{4}\int_{\Omega
}k(x)e^{u}.
\]
For any $\rho$ sufficiently small, the functional above has a local minimum
which represents a solution to (\ref{In1}) close to $0$. Furthermore, the
Moser-Trudinger inequality assures the existence of a second solution, which
can be obtained as a mountain pass critical point for $J_{\rho}$. Thus, as
$\rho\rightarrow0$, this second solution turns out not to be bounded. The aim
of the present paper is to study multiplicity of solutions to (\ref{In1}), for
$\rho$ positive and small, under some topological assumption on $\Omega,$ and
to describe the asymptotic behaviour of such solutions as the parameter $\rho$
tends to zero. Indeed, we prove that, if some cohomology group of $\Omega$ is
not zero, then given any integer $m$ we can construct solutions to (\ref{In1})
which concentrate and blow-up, as $\rho\rightarrow0$, around some given $m$
points of the domain. These are the singular limits.

Let us mention that concentration phenomena of this type, in domains with
topology, appear also in other problems. As a first example, the
two-dimensional version of problem (\ref{In1}) is the boundary value problem
associated to Liouville%
\'{}%
s equation \cite{l}
\begin{equation}
\left\{
\begin{array}
[c]{l}%
\Delta u+\rho^{2}\,k(x)\,e^{u}=0,\quad\mbox{in}\;\Omega,\cr\cr u=0,\quad
\mbox{on}\;\partial\Omega,
\end{array}
\right.  \label{l}%
\end{equation}
where $k(x)$ is a non-negative function and now $\Omega$ is a smooth bounded
domain in $\mathbb{R}^{2}$. In \cite{KMP} it is proved that problem (\ref{l})
admits solutions concentrating, as $\rho\rightarrow0$, around some given set
of $m$ points of $\Omega$, for any given integer $m$, provided that $\Omega$
is not simply connected. See also \cite{bp,egp,e,cl2,bm,wei,ns,s,w,t,tt} for
related results. A similar result holds true for another semilinear elliptic
problem, still in dimension $2$, namely
\begin{equation}
\left\{
\begin{array}
[c]{l}%
\Delta u+u^{p}=0,\quad u>0,\quad\mbox{in}\;\Omega,\cr\cr u=0,\quad
\mbox{on}\;\partial\Omega,
\end{array}
\right.  \label{l1}%
\end{equation}
where $p$ now is a parameter converging to $+\infty$. Again in this situation,
if $\Omega$ is not simply connected, then for $p$ large there exists a
solution to (\ref{l1}) concentrating around some set of $m$ points of $\Omega
$, for any positive integer $m$ \cite{EMP}.

In higher dimensions, the analogy is with the classical Bahri-Coron problem.
In \cite{bc}, Bahri and Coron show that, if $N\geq3$ and $\Omega
\subset\mathbb{R}^{N}$ is a bounded domain, then the presence of topology in
the domain guarantees existence of solutions to
\begin{equation}
\left\{
\begin{array}
[c]{l}%
\Delta u+u^{{\frac{N+2}{N-2}}}=0,\quad u>0,
\quad\mbox{in}\;\Omega,\cr\cr  u=0,\quad\mbox{on}\;\partial\Omega.
\end{array}
\right.
\end{equation}
Partial results in this direction are also known in the slightly
super critical version of Bahri-Coron's problem, namely
\begin{equation}
\left\{
\begin{array}
[c]{l}%
\Delta u+u^{{\frac{N+2}{N-2}}+\varepsilon}=0,\quad\mbox{in}\;\Omega
,\cr\cr u>0\;,\quad u=0,\quad\mbox{on}\;\partial\Omega,
\end{array}
\right.  \label{bn}%
\end{equation}
with $\varepsilon>0$ small. In \cite{dfm} it is proved that, under the
assumption that $\Omega$ is a bounded smooth domain in $\mathbb{R}^{N}$ with a
sufficiently small hole, then a solution to (\ref{bn}) exhibiting
concentration in two points is present. See also \cite{blr,r,ddm,pr}.

The main point of this paper is to show that the presence of topology in the
domain implies strongly existence of blowing-up solutions for problem
(\ref{In1}).

Let $H^{\ast}:=H^{\ast}($ $\cdot$ $;\mathbb{K})$ denote singular cohomology
with coefficients in a field $\mathbb{K}$. We also denote by $H^{d}(\Omega)$
the $d$-th cohomology group in the field $\mathbb{K}$. We shall prove the
following \medskip

\begin{teo}
\label{teo1} Assume that there exists $d\geq1$ such that $H^{d}(\Omega
)\not =0$ and that $\inf_{\Omega}k>0$. Then, given any integer $m\geq1$, there
exists a family of solutions $u_{\rho}$ to Problem \eqref{In1}, for $\rho$
small enough, with the property that
\[
\lim_{\rho\rightarrow0}\rho^{4}\int_{\Omega}k(x)e^{u_{\rho}(x)}\,dx=64\,\pi
^{2}\,m.
\]
Furthermore, there are $m$ points $\xi_{1}^{\rho},\ldots,\xi_{m}^{\rho}$ in
$\Omega$, separated at uniform positive distance from each other and from the
boundary as $\rho\rightarrow0,$ for which $u_{\rho}$ remains uniformly bounded
on $\Omega\setminus\cup_{j=1}^{m}B_{\delta}(\xi_{j}^{\rho})$ and
\[
\sup_{B_{\delta}(\xi_{j}^{\rho})}u_{\rho}\rightarrow+\infty,
\]
for any $\delta>0$.
\end{teo}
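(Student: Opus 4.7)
The plan is to employ a finite-dimensional Lyapunov--Schmidt reduction, in the spirit of the argument used for the two-dimensional Liouville problem \eqref{l} in \cite{KMP}. The strategy splits naturally into three stages: construction of an accurate approximate solution depending on $m$ concentration parameters, a nonlinear perturbation producing an exact solution orthogonal to the approximate kernel, and a topological min-max procedure on the reduced functional that exploits the assumption $H^{d}(\Omega)\neq 0$.

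The basic building blocks are the finite-mass entire solutions of $\Delta^{2}U = e^{U}$ in $\mathbb{R}^{4}$, namely
\[
U_{\mu,\xi}(x) = \log\frac{384\,\mu^{4}}{(\mu^{2}+|x-\xi|^{2})^{4}},
\]
which enjoy $\int_{\mathbb{R}^{4}}e^{U_{\mu,\xi}}=64\pi^{2}$, explaining the constant in the statement. For an $m$-tuple $\xi=(\xi_{1},\ldots,\xi_{m})$ of distinct points of $\Omega$ uniformly separated from each other and from $\partial\Omega$, I would take as ansatz $W_{\rho,\xi}=\sum_{j=1}^{m}PU_{\mu_{j},\xi_{j}}+c_{\rho,\xi_{j}}$, where $P$ is the projection into $H^{2}(\Omega)\cap H^{1}_{0}(\Omega)$ onto the Navier boundary conditions $u=\Delta u=0$, $c_{\rho,\xi_{j}}$ is a logarithmic shift that absorbs $\log(\rho^{4}k(\xi_{j}))$, and the scales $\mu_{j}=\mu_{j}(\rho,\xi)$ are of the order of $\rho$ and tuned so that the projected bubbles match along $\partial\Omega$ against the regular part of the biharmonic Green's function $G_{N}$ under Navier conditions; equivalently, they are chosen so that a suitable reduced energy expansion below is consistent.

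Next, writing $u=W_{\rho,\xi}+\phi$, the problem linearizes around $W_{\rho,\xi}$ to an operator whose kernel near each bubble is essentially spanned by the five scaling/translation directions $\partial_{\mu_{j}}U_{\mu_{j},\xi_{j}}$ and $\partial_{\xi_{j,\ell}}U_{\mu_{j},\xi_{j}}$. The heart of the analytical work is to establish, in a weighted $L^{\infty}$ norm tailored to the biharmonic bubble, a uniform a priori estimate of order $|\log\rho|$ for this linearization modulo that finite-dimensional space; combined with a quantitative $O(\rho^{4}|\log\rho|)$ control on the error $\Delta^{2}W_{\rho,\xi}-\rho^{4}k(x)e^{W_{\rho,\xi}}$, a standard contraction-mapping argument then yields, for each admissible $\xi$, a unique small correction $\phi_{\rho}(\xi)$ smoothly depending on $\xi$. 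This is the stage where the fourth-order nature of the operator is felt most strongly, since the usual maximum-principle shortcuts available for \eqref{l} are no longer at one's disposal and must be replaced by delicate blow-up/contradiction arguments combined with elliptic decay estimates.

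Once $\phi_{\rho}(\xi)$ is in hand, the original problem \eqref{In1} reduces to finding critical points of $F_{\rho}(\xi):=J_{\rho}(W_{\rho,\xi}+\phi_{\rho}(\xi))$ on the configuration domain $\Omega^{m}_{\ast}$ of $m$-tuples of uniformly separated interior points. A careful expansion shows
\[
F_{\rho}(\xi) = a_{m} + b_{m}\rho^{4}|\log\rho| + c\,\rho^{4}\,\varphi_{m}(\xi) + o(\rho^{4}),
\]
where $\varphi_{m}(\xi)$ is an explicit combination of the Robin function $H(\xi_{j},\xi_{j})$ of the Navier biharmonic Green's function, the cross terms $G_{N}(\xi_{i},\xi_{j})$, and $\log k(\xi_{j})$, and $\varphi_{m}\to-\infty$ whenever two points collide or one point approaches $\partial\Omega$. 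The last and, in my view, most delicate step is to extract a critical point that is stable under the $o(\rho^{4})$ perturbation. Here the topological hypothesis $H^{d}(\Omega)\neq 0$ is used, as in \cite{KMP}, to produce a non-trivial min-max class on the configuration space $F_{m}(\Omega)$ of unordered $m$-tuples: the non-vanishing of some positive-degree cohomology of $\Omega$ forces, via a cup-length / Künneth-type argument, the cohomology of $F_{m}(\Omega)$ to be non-trivial in a degree that produces a min-max level of $\varphi_{m}$ strictly inside $\Omega^{m}_{\ast}$. The blow-up of $\varphi_{m}$ at the boundary of $\Omega^{m}_{\ast}$ ensures that this min-max level is a genuine critical value, which in turn provides the family $u_{\rho}$ of solutions with the announced concentration behavior, the identity $\rho^{4}\int_{\Omega}k(x)e^{u_{\rho}}\to 64\pi^{2}m$ being a direct consequence of the ansatz and the mass of the standard bubble.
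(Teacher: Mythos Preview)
Your overall Lyapunov--Schmidt strategy is correct and coincides with the paper's: build a sum of projected bubbles, invert the linearized operator modulo the approximate kernel, and reduce to finding critical points of $\varphi_{m}$ up to a $C^{1}$-small perturbation. Two inaccuracies are worth noting, one minor and one substantial.

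The minor one is the form of the expansion: the paper obtains
\[
J_{\rho}[U(\xi)] = -128\pi^{2}m + 256\pi^{2}m\,|\log\varepsilon| + 32\pi^{2}\varphi_{m}(\xi) + O(\varepsilon),
\]
so $\varphi_{m}$ enters at order $O(1)$, not at order $\rho^{4}$; the remainder is $O(\varepsilon)$ in $C^{1}$, not $o(\rho^{4})$. This does not affect the scheme but your stated orders would not survive the actual computation.

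The substantial gap is in the final topological step. Your claim that ``$\varphi_{m}\to-\infty$ whenever two points collide or one point approaches $\partial\Omega$'' is false for the second alternative. With the paper's normalization $H(x,\xi)=G(x,\xi)+8\log|x-\xi|$, one has $H(\xi,\xi)\sim 8\log\mathrm{dist}(\xi,\partial\Omega)\to-\infty$ near $\partial\Omega$, hence the term $-H(\xi_{j},\xi_{j})$ in $\varphi_{m}$ forces $\varphi_{m}\to+\infty$ there. Thus $\varphi_{m}$ goes to $-\infty$ on the diagonal but to $+\infty$ near $\partial\Omega$, and the configuration set is \emph{not} trapped by a uniform sublevel condition; the min-max level you describe could a priori be attained (or escape) along $\partial\Omega$. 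This is exactly the difficulty the paper has to overcome, and it does so by an extra ingredient you are missing: a boundary lemma showing that, on any fixed energy window, for $\xi\in\partial(\Omega_{\delta}^{m})$ some coordinate $\xi_{i}$ has $\nabla_{\xi_{i}}\varphi_{m}(\xi)$ nonzero (tangentially if $\xi_{i}\in\partial\Omega_{\delta}$). This is proved by a blow-up argument reducing to the half-space model. It then allows the construction of a pseudo-gradient vector field tangent to the strata of $\overline{\Omega_{\delta}}^{\,m}$, so that the associated negative flow preserves $\overline{\Omega_{\delta}}^{\,m}$ between the chosen levels. Only with this flow in hand does the cohomological input (your K\"unneth/cup-length idea, which the paper states as $H^{md}(\Omega^{m},D)\neq 0$) yield a genuine critical point stable under the $C^{1}$-perturbation. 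Without this boundary analysis, your argument as written does not close.
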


As a simple example, we can say that any bounded domain in $\mathbb{R}^{4}$
that is not simply connected satisfies $H^{1}(\Omega)\neq0$ and thus above
theorem ensures existence of multiple solutions for Problem (\ref{In1}) for
$\rho$ small enough.

The general behaviour of arbitrary families of solutions to
\equ{In1} has been studied by C.S. Lin and J.-C. Wei in
\cite{referee}, where they show that, when blow-up occurs for
\equ{In1} as $\rho \to 0$, then it is located at a finite number of
peaks, each peak being isolated and carrying the energy $64 \pi^2$
(at a peak, $u \to +\infty$ and outside a peak, $u$ is bounded). See
\cite{linwei1} and \cite{linwei2} for related results.

We shall see that the sets of points where the solution found in
Theorem \ref{teo1} blows-up can be characterized in terms of Green's
function for the biharmonic operator in $\Omega$ with the
appropriate boundary conditions. Let $G(x,\xi)$ be the Green
function defined by
\begin{equation}
\quad%
\begin{cases}
\Delta_{x}^{2}G(x,\xi)=64\pi^{2}\delta_{\xi}(x), & x\in\Omega,\\
\quad G(x,\xi)=\Delta_{x}G(x,\xi)=0, & x\in\partial\Omega
\end{cases}
\label{Un1}%
\end{equation}
and let $H(x,\xi)$ be its \emph{regular part}, namely, the smooth function
defined as
\[
H(x,\xi):=G(x,\xi)+8\log\,\abs{x-\xi}.
\]

\noindent The location of the points of concentration is related to the set of
critical points of the function
\begin{equation}
\varphi_{m}(\xi)=-\sum_{j=1}^{m}\left\{  2\log k(\xi_{j})+H(\xi_{j},\xi
_{j})\right\}  -\sum_{i\neq j}G(\xi_{i},\xi_{j}), \label{Rob}%
\end{equation}
defined for points $\xi=(\xi_{1},\dots,\xi_{m})$ such that $\xi_{i}\in\Omega$
and $\xi_{i}\neq\xi_{j}$ if $i\neq j$.

In \cite{BaPa} the authors prove that for each \textit{nondegenerate} critical
point of $\varphi_{m}$ there exists a solution to (\ref{In1}), for any small
$\rho$, which concentrates exactly around such critical point as
$\rho\rightarrow0$. We shall show the existence of a solution under a weaker
assumption, namely, that $\varphi_{m}$ has a \textit{minimax value in an
appropriate subset.}

More precisely, we consider the following situation. Let $\Omega^{m}$ denote
the cartesian product of $m$ copies of $\Omega.$ Note that in any compact
subset of $\Omega^{m}$, we may define, without ambiguity,
\[
\varphi_{m}(\xi_{1},\ldots,\xi_{m})=-\infty\quad\hbox{if }\xi_{i}=\xi
_{j}\hbox{ for some }i\neq j.
\]
We shall assume that there exists an open subset $U$ of $\Omega$ with smooth
boundary, compactly contained in $\Omega,$ and such that $\inf_{U}k>0,$ with
the following properties:

\noindent\textbf{P1)} $U^{m}$\emph{ contains two closed subsets }$B_{0}\subset
B$\emph{ such that}
\[
\sup_{\xi\in B_{0}}\varphi_{m}(\xi)<\inf_{\gamma\in\Gamma}\sup_{\xi\in
B}\varphi_{m}(\gamma(\xi))=:c_{0},
\]
\emph{where} $\Gamma:=\{\gamma\in\mathcal{C}(B,\overline{U}^{m}):\gamma
(\xi)=\xi$ for every $\xi\in B_{0}\}.$

\noindent\textbf{P2)} \emph{For every} $\xi=(\xi_{1},...,\xi_{m})\in\partial
U^{m}$ \emph{with} $\varphi_{m}(\xi)=c_{0}$, \emph{there exists an}
$i\in\{1,...,m\}$ \emph{such that}%
\[%
\begin{array}
[c]{ll}%
\nabla_{\xi_{i}}\varphi_{m}(\xi)\not =0 & \text{if }\xi_{i}\in U,\\
\nabla_{\xi_{i}}\varphi_{m}(\xi)\cdot\tau\not =0\text{ \ \emph{for some} }%
\tau\in T_{\xi_{i}}(\partial U) & \text{if }\xi_{i}\in\partial U,
\end{array}
\]
\emph{where} $T_{\xi_{i}}(\partial U)$ \emph{denotes the tangent space to}
$\partial U$ \emph{at the point} $\xi_{i}.$

\medskip We will show that, under these assumptions, $\varphi_{m}$ has a
critical point $\xi\in U^{m}$ with critical value $c_{0}.$ Moreover, the same
is true for any small enough ${\mathcal{C}}^{1}$-perturbation of $\varphi
_{m}.$ Property \textbf{P1)} is a common way of describing a change of
topology of the sublevel sets of $\varphi_{m}$ at the level $c_{0},$ and
$c_{0}$ is called a minimax value of $\varphi_{m}.$ It is a critical value if
$U^{m}$ is invariant under the negative gradient flow of $\varphi_{m}.$ If
this is not the case, we use property \textbf{P2) }to modify the gradient
vector field of $\varphi_{m}$ near $\partial U^{m}$ at the level $c_{0}$ and
thus obtain a new vector field with the same stationary points, and such that
$\overline{U}^{m}$ is invariant and $\varphi_{m}$ is a Lyapunov function for
the associated negative flow near the level $c_{0}$ (see Lemmas \ref{pertgrad}%
\ and \ref{def}\ below). This allows us to prove Theorem \ref{teo1}\ and the following.

\begin{teo}
\label{teo2} Let $m\geq1$ and assume that there exists an open subset $U$ of
$\Omega$ with smooth boundary, compactly contained in $\Omega,$ with $\inf
_{U}k>0,$ which satisfies \emph{P1)} and \emph{P2)}. Then, for $\rho$ small
enough, there exists a solution $u_{\rho}$ to Problem \eqref{In1} with
\[
\lim_{\rho\rightarrow0}\rho^{4}\int_{\Omega}k(x)e^{u_{\rho}}=64\,\pi^{2}\,m.
\]
Moreover, there is an $m$-tuple $(x_{1}^{\rho},\ldots,x_{m}^{\rho})\in U^{m}$,
such that as $\rho\rightarrow0$
\[
\nabla\varphi_{m}(x_{1}^{\rho},\ldots,x_{m}^{\rho})\rightarrow0,\quad
\varphi_{m}(x_{1}^{\rho},\ldots,x_{m}^{\rho})\rightarrow c_{0},\quad
\]
for which $u_{\rho}$ remains uniformly bounded on $\Omega\setminus\cup
_{j=1}^{m}B_{\delta}(x_{i}^{\rho})$, and
\[
\sup_{B_{\delta}(x_{i}^{\rho})}u_{\rho}\rightarrow+\infty,
\]
for any $\delta>0$.
\end{teo}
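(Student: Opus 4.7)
The natural route is a Lyapunov--Schmidt finite-dimensional reduction followed by a minimax argument exploiting properties \textbf{P1)} and \textbf{P2)}. I would first construct an approximate solution concentrated at a variable $m$-tuple $\xi=(\xi_{1},\dots,\xi_{m})\in U^{m}$ with pairwise distance and distance to $\partial U$ bounded below. Up to scaling, the radial entire solutions of $\Delta^{2}U=e^{U}$ in $\R^{4}$ are
\[
U_{\mu,\xi}(x)=\log\frac{C\mu^{4}}{(\mu^{2}+|x-\xi|^{2})^{4}},
\]
each of mass $64\pi^{2}$. I would set
\[
V_{\xi}(x)=\sum_{j=1}^{m}PU_{\mu_{j},\xi_{j}}(x),
\]
where $P$ realizes the Navier boundary conditions $u=\Delta u=0$ on $\partial\Omega$, and $\mu_{j}=\mu_{j}(\xi,\rho)$ is chosen so that the outer behavior of $PU_{\mu_{j},\xi_{j}}$ matches $G(\cdot,\xi_{j})$; this forces a schematic relation
\[
-4\log\mu_{j}=\log\rho^{-4}+H(\xi_{j},\xi_{j})+\sum_{i\neq j}G(\xi_{i},\xi_{j})+2\log k(\xi_{j})+O(1),
\]
so $\mu_{j}\sim\rho$ up to a factor depending on $\xi$. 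Standard estimates produce a small error $R_{\xi}:=\Delta^{2}V_{\xi}-\rho^{4}k(x)e^{V_{\xi}}$ in a suitable weighted norm.

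Writing $u=V_{\xi}+\phi$ and linearizing gives the operator $L\phi:=\Delta^{2}\phi-\rho^{4}k(x)e^{V_{\xi}}\phi$, whose approximate kernel is spanned by the $5m$ derivatives of $PU_{\mu_{j},\xi_{j}}$ with respect to the parameters $\xi_{j}$ and $\mu_{j}$. A now-classical analysis (analogous to \cite{BaPa}) shows that $L$ is invertible on the $L^{2}$-orthogonal complement of this approximate kernel, with an inverse bounded uniformly in $\rho$ and in $\xi$ on compact subsets of $U^{m}$ where the $\xi_{j}$ remain separated. A contraction argument then produces a unique small $\phi=\phi(\xi)$, of class $C^{1}$ in $\xi$, solving the projected equation. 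The remaining task is to choose $\xi$ so that all Lagrange multipliers vanish, i.e.\ to find a critical point of the reduced functional
\[
F_{\rho}(\xi):=J_{\rho}(V_{\xi}+\phi(\xi)).
\]
A careful expansion, using the defining relations for the $\mu_{j}$, yields
\[
F_{\rho}(\xi)=a_{0}-a_{1}\rho^{4}\log\rho^{-4}-a_{1}\rho^{4}\varphi_{m}(\xi)+o(\rho^{4})\quad\text{in }C^{1},
\]
uniformly on compact subsets of $\{\xi_{i}\neq\xi_{j}\}\cap\overline{U}^{m}$, with explicit positive constants $a_{0},a_{1}$. Thus, up to additive and multiplicative constants, $F_{\rho}$ is a $C^{1}$-small perturbation of $-\varphi_{m}$.

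The final step is to locate a critical point of $\varphi_{m}$, and hence of $F_{\rho}$, at the level $c_{0}$. Property \textbf{P1)} supplies the linking structure that defines $c_{0}$, but $\overline{U}^{m}$ need not be invariant under the negative gradient flow of $\varphi_{m}$, so a plain deformation could push trajectories out of $U^{m}$. Property \textbf{P2)} repairs this: it permits a modification of the (pseudo-)gradient of $\varphi_{m}$ near $\partial U^{m}$ at level $c_{0}$, with the same zeros and with $\varphi_{m}$ still a Lyapunov function, while making $\overline{U}^{m}$ negatively invariant. This is the content of Lemmas \ref{pertgrad} and \ref{def}. A standard deformation/intersection argument then produces a critical point of $\varphi_{m}$ in $U^{m}$ at level $c_{0}$, and the same reasoning applied to the $C^{1}$-close functional $F_{\rho}$ yields a critical point $\xi^{\rho}=(x_{1}^{\rho},\dots,x_{m}^{\rho})\in U^{m}$ with $\nabla\varphi_{m}(\xi^{\rho})\to 0$ and $\varphi_{m}(\xi^{\rho})\to c_{0}$. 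Then $u_{\rho}=V_{\xi^{\rho}}+\phi(\xi^{\rho})$ is the desired solution; the energy identity $\rho^{4}\int_{\Omega}k e^{u_{\rho}}\to 64\pi^{2}m$ and the concentration statements follow directly from the structure of the ansatz.

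The principal obstacle I anticipate is uniformity up to $\partial U^{m}$: the invertibility of $L$ and the $C^{1}$-expansion of $F_{\rho}$ must hold uniformly as $\xi$ approaches $\partial U^{m}$ while the $\xi_{j}$ remain bounded away from each other and from $\partial\Omega$, which is ensured by $\overline{U}\subset\Omega$, smoothness of $\partial U$, and $\inf_{U}k>0$. A more delicate conceptual point is the stability of the modified-flow construction of \textbf{P2)} under $C^{1}$-small perturbations near the level $c_{0}$, so that the critical point obtained really belongs to $F_{\rho}$ and not merely to $\varphi_{m}$; this is what makes the quantitative $C^{1}$-closeness in the reduced expansion essential rather than a matter of convenience.
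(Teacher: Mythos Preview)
Your proposal is correct and follows essentially the same route as the paper: Lyapunov--Schmidt reduction to a finite-dimensional functional that is a $C^{1}$-small perturbation of $\varphi_{m}$ (Lemmas~\ref{Ene1}, \ref{VRLe1}, \ref{VRLe2}), followed by the modified-gradient minimax argument of Lemmas~\ref{pertgrad} and~\ref{def} using \textbf{P1)}--\textbf{P2)}. One cosmetic correction: in the paper's normalization the expansion reads $J_{\rho}[U]=-128\pi^{2}m+256\pi^{2}m|\log\varepsilon|+32\pi^{2}\varphi_{m}(\xi)+O(\varepsilon)$, so the reduced functional is an affine perturbation of $+\varphi_{m}$ with $O(1)$ (not $O(\rho^{4})$) coefficient; this does not affect your argument, since only $C^{1}$-closeness after affine rescaling is used.
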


We will show that, for every $m\geq1,$ the set $U:=\{\xi\in\Omega
:{\mbox {dist}}(\xi,\partial\Omega)>\delta\}$ has property \textbf{P2)} at a
given $c_{0},$ for $\delta$ small enough (see Lemma \ref{bdryflow}). Thus, if
$\inf_{\Omega}k>0,$ and if there exist closed subsets $B_{0}\subset B$ of
$\Omega^{m}$ with
\[
\sup_{\xi\in B_{0}}\varphi_{m}(\xi)<\inf_{\gamma\in\Gamma}\sup_{\xi\in
B}\varphi_{m}(\gamma(\xi)),
\]
then both conditions \textbf{P1)} and \textbf{P2)} hold. Condition
\textbf{P1)}\ holds, for example, if $\varphi_{m}$ has a (possibly degenerate)
local minimum or local maximum. So a direct consequence of Theorem \ref{teo2}
is that in any bounded domain $\Omega$ with $\inf_{\Omega}k>0,$ Problem
(\ref{In1}) has at least one solution concentrating exactly at one point,
which corresponds to the minimum of the regular Green function $H$. Moreover
if, for example, $\Omega$ is a contractible domain obtained by joining
together $m$ disjoint bounded domains through thin enough tubes, then the
function $\varphi_{m}$ has a (possibly degenerate) local minimum, which gives
rise to a solution exhibiting $m$ points of concentration.

Finally, recall that Problem (\ref{In1}) corresponds to a standard case of
\emph{uniform singular convergence}, in the sense that the associated
nonlinear coefficient in Problem (\ref{In1}) --$\rho^{4}k(x)$-- goes to $0$
uniformly in $\bar{\Omega}$ as $\rho\rightarrow0$, property that is also
present in Problem (\ref{l}). Nontrivial topology strongly determines
existence of solutions. However, we expect that this strong influence should
decay under an inhomogeneous and \emph{non-uniform} singular behavior, where
critical points of an \emph{external} function determine existence and
multiplicity of solutions. See \cite{PM} for a recent two dimensional case of
this phenomenon.

\medskip The paper is organized as follows. Section $2$ is devoted to
describing a first approximation for the solution and to estimating the error.
Furthermore, Problem (\ref{In1}) is written as a fixed point problem,
involving a linear operator. In Section $3$ we study the invertibility of the
linear problem. In Section $4$ we solve a projected nonlinear problem. In
Section $5$ we show that solving the entire nonlinear problem reduces to
finding critical points of a certain functional. Section $6$ is devoted to the
proofs of Theorems \ref{teo1} and \ref{teo2}.

\section{Preliminaries and ansatz for the solution}

\bigskip

\noindent This section is devoted to construct a reasonably good approximation
$U$ for a solution of (\ref{In1}). The shape of this approximation will depend
on some points $\xi_{i}$, which we leave as parameters yet to be adjusted,
where the spikes are meant to take place. As we will see, a convenient set to
select $\xi=(\xi_{1},\ldots,\xi_{m})$ is
\begin{equation}
\mathcal{O}:=\Big\{\mathbf{\xi}\in\Omega^{m}:{\mbox
{dist}}(\xi_{j},\partial\Omega)\geq2\delta_{0},\ \forall\,j=1,\dots
,m,\hbox{ and }\min_{i\neq j}\abs{\xi_i - \xi_j}\geq2\delta_{0}%
\Big\} \label{Os}%
\end{equation}
where $\delta_{0}>0$ is a small fixed number. We thus fix $\xi\in\mathcal{O}$.

\medskip\noindent For numbers $\mu_{j}>0$, $j=1,\dots,m$, yet to be chosen,
$x\in\mathbb{R}^{4}$ and $\varepsilon>0$ we define
\begin{equation}
\label{Aa3}u_{j}(x)=4\log\frac{\mu_{j}(1+\varepsilon^{2})}{\mu_{j}%
^{2}\varepsilon^{2} + \abs{x-\xi_j}^{2} } - \log k(\xi_{j}),
\end{equation}
so that $u_{j}$ solves
\begin{equation}
\label{Aa4}\Delta^{2} u - \rho^{4} k(\xi_{j}) e^{u}=0 \hbox{ in } \mathbb{R}
^{4},
\end{equation}
with
\begin{equation}
\label{rhoeps}\rho^{4} = \frac{384 \, \varepsilon^{4}}{(1+\varepsilon^{2}%
)^{4}},
\end{equation}
that is, $\rho\sim\varepsilon$ as $\varepsilon\to0$.

\noindent Since $u_{j}$ and $\Delta u_{j}$ are not zero on the boundary
$\partial\Omega$, we will add to it a bi-harmonic correction so that the
boundary conditions are satisfied. Let $H_{j}(x)$ be the smooth solution of
\[
\label{Aa5}\quad%
\begin{cases}
\Delta^{2} H_{j}=0 & \hbox{ in } \Omega,\\
H_{j}=-u_{j} & \hbox{ on } \partial\Omega,\\
\Delta H_{j} = -\Delta u_{j} & \hbox{ on } \partial\Omega.
\end{cases}
\]
We define our first approximation $U(\xi)$ as
\begin{equation}
\label{Aa5a}U(\xi) \equiv\sum_{j=1}^{m} U_{j},\quad U_{j}\equiv u_{j} +
H_{j}\, .
\end{equation}
As we will rigorously prove below, $\left(  u_{j} + H_{j} \right)  (x) \sim
G(x,\xi_{j})$ where $G(x,\xi)$ is the Green function defined in (\ref{Un1}).

\noindent While $u_{j}$ is a good approximation to a solution of (\ref{In1})
near $\xi_{j}$, it is not so much the case for $U$, unless the remainder
$U\,-\, u_{j}\, = \, \big( H_{j} + \sum_{k\ne j} u_{k} \big)$ vanishes at main
order near $\xi_{j}$. This is achieved through the following precise choice of
the parameters $\mu_{k}$
\begin{equation}
\log\mu_{j}^{4} = \log k(\xi_{j}) + H(\xi_{j},\xi_{j}) + \sum_{i\neq j}%
G(\xi_{i},\xi_{j}). \label{muk}%
\end{equation}
We thus fix $\mu_{j}$ \emph{a priori} as a function of $\xi$. We write
\[
\mu_{j}=\mu_{j}(\xi)
\]
for all $j=1,\dots,m$. Since $\xi\in{\mathcal{O}}$,
\begin{equation}
\label{mu2}\frac1C\leq\mu_{j} \leq C, \quad\hbox{ for all } j=1,\dots,m,
\end{equation}
for some constant $C>0$.

\medskip The following lemma expands $U_{j}$ in $\Omega$.

\begin{lema}
\label{AaLe1} Assume $\xi\in\mathcal{O}$. Then we have
\begin{equation}
\label{Aa6}H_{j}(x)=H(x,\xi_{j})-4\log\mu_{j} (1+\varepsilon^{2}) + \log
k(\xi_{j}) + O(\mu_{j}^{2}\varepsilon^{2}),
\end{equation}
uniformly in $\Omega$, and
\begin{equation}
\label{Aa7}u_{j}(x)=4\log\mu_{j}(1+\varepsilon^{2}) -\log k(\xi_{j})
-8\log\abs{x-\xi_j} + O(\mu_{j}^{2} \varepsilon^{2}),
\end{equation}
uniformly in the region $\abs{x-\xi_j}\geq\delta_{0}$, so that in this
region,
\begin{equation}
\label{Aa8}U_{j}(x) = G(x,\xi_{j})+O(\mu_{j}^{2}\varepsilon^{2}).
\end{equation}

\end{lema}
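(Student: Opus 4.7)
The plan is to compare $H_j$ with the candidate regular-part expression
\[
\widetilde{H}_j(x) := H(x,\xi_j) - 4\log\mu_j(1+\varepsilon^2) + \log k(\xi_j),
\]
which is biharmonic in $\Omega$: recall that $H(\cdot,\xi_j)=G(\cdot,\xi_j)+8\log\abs{\cdot-\xi_j}$ is smooth and biharmonic in $\Omega$, since the distributional identity $\Delta^{2}(-8\log\abs{\cdot-\xi_j})=64\pi^{2}\delta_{\xi_j}$ in $\mathbb{R}^{4}$ exactly cancels the source in \eqref{Un1}. Setting $Z_j:=H_j-\widetilde{H}_j$, one has $\Delta^{2}Z_j=0$ in $\Omega$, so \eqref{Aa6} reduces to controlling $Z_j$ and $\Delta Z_j$ on $\partial\Omega$ and invoking the maximum-principle estimate for the biharmonic equation with Navier boundary data.

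I would first compute the two boundary traces. Since $G(\cdot,\xi_j)$ vanishes on $\partial\Omega$, one gets $\widetilde{H}_j|_{\partial\Omega}=8\log\abs{x-\xi_j}-4\log\mu_j(1+\varepsilon^{2})+\log k(\xi_j)$. Combined with $H_j|_{\partial\Omega}=-u_j|_{\partial\Omega}$ and the explicit form of $u_j$, this gives exactly
\[
Z_j(x)=4\log\!\Big(1+\tfrac{\mu_j^{2}\varepsilon^{2}}{\abs{x-\xi_j}^{2}}\Big),\qquad x\in\partial\Omega,
\]
which is $O(\mu_j^{2}\varepsilon^{2})$ uniformly, thanks to $\abs{x-\xi_j}\geq 2\delta_{0}$ on $\partial\Omega$ and $\mu_j\leq C$ from \eqref{mu2}. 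For the second Navier datum, a direct computation using $\Delta\log\abs{x-\xi_j}=2/\abs{x-\xi_j}^{2}$ in $\mathbb{R}^{4}$ gives $\Delta(8\log\abs{x-\xi_j})=16/r^{2}$ with $r:=\abs{x-\xi_j}$, whereas differentiating the explicit form of $u_j$ yields $\Delta u_j=-16(2\mu_j^{2}\varepsilon^{2}+r^{2})/(\mu_j^{2}\varepsilon^{2}+r^{2})^{2}$. Using $\Delta G(\cdot,\xi_j)|_{\partial\Omega}=0$ and subtracting, the cross terms telescope into
\[
\Delta Z_j(x)=-\frac{16\,\mu_j^{4}\varepsilon^{4}}{r^{2}(\mu_j^{2}\varepsilon^{2}+r^{2})^{2}},\qquad x\in\partial\Omega,
\]
of size $O(\mu_j^{4}\varepsilon^{4})$ uniformly.

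I would then conclude by the maximum principle applied to the decomposition of $\Delta^{2}Z_j=0$ as the system $\Delta Z_j=-W_j$, $\Delta W_j=0$. Since $W_j$ is harmonic in $\Omega$ with $O(\mu_j^{4}\varepsilon^{4})$ boundary data, $\norm{W_j}_{L^{\infty}(\Omega)}=O(\mu_j^{4}\varepsilon^{4})$; then $Z_j$ satisfies $\Delta Z_j=-W_j$ with $O(\mu_j^{2}\varepsilon^{2})$ boundary data, and comparison with a fixed radial supersolution of $\Delta w=-\norm{W_j}_{\infty}$ gives $\norm{Z_j}_{L^{\infty}(\Omega)}=O(\mu_j^{2}\varepsilon^{2})$, which is exactly \eqref{Aa6}. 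Estimate \eqref{Aa7} is then a one-line Taylor expansion $\log(\mu_j^{2}\varepsilon^{2}+r^{2})=2\log r+O(\mu_j^{2}\varepsilon^{2}/r^{2})$, valid uniformly for $r\geq\delta_{0}$. Finally, \eqref{Aa8} is obtained by adding \eqref{Aa6} and \eqref{Aa7} and recognizing $H(x,\xi_j)-8\log\abs{x-\xi_j}=G(x,\xi_j)$.

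The step I expect to be the most delicate is the precise cancellation producing the $O(\mu_j^{4}\varepsilon^{4})$ size of $\Delta Z_j|_{\partial\Omega}$ (rather than the naive $O(\mu_j^{2}\varepsilon^{2})$): this sharp gain is exactly what, after the first maximum-principle step for $W_j$, propagates through the interior to give \eqref{Aa6}. Everything else is bookkeeping, relying on the uniform separation $\dist(\xi_j,\partial\Omega)\geq 2\delta_{0}$ from $\xi\in\mathcal{O}$ and the uniform two-sided bounds \eqref{mu2} for $\mu_j(\xi)$.
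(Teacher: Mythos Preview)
Your proof is correct and follows essentially the same route as the paper: define the biharmonic difference $Z_j=H_j-\widetilde{H}_j$ (the paper's $z$), estimate its Navier boundary data using $\dist(\xi_j,\partial\Omega)\geq 2\delta_0$, and conclude via the maximum principle applied to the system $\Delta Z_j=-W_j$, $\Delta W_j=0$. One minor remark: your closing comment overstates the role of the sharp $O(\mu_j^{4}\varepsilon^{4})$ cancellation for $\Delta Z_j|_{\partial\Omega}$; the paper only records and uses the cruder bound $O(\mu_j^{2}\varepsilon^{2})$, and that already suffices, since the final estimate $\norm{Z_j}_{\infty}\leq C(\norm{W_j}_{\infty}+\norm{Z_j}_{L^{\infty}(\partial\Omega)})$ is dominated by the $O(\mu_j^{2}\varepsilon^{2})$ Dirichlet trace of $Z_j$ regardless.
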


\begin{proof}
Let us prove (\ref{Aa6}). Define $z(x)=H_{j}(x)+4\log\mu_{j}(1+\varepsilon
^{2}) -\log k(\xi_{j})- H(x,\xi_{j}) $. Then $z$ is a bi-harmonic function
which satisfies
\[
\label{Aa10}\quad%
\begin{cases}
\Delta^{2} z=0 & \hbox{ in } \Omega,\\
z=-u_{j} +4\log\mu_{j} (1+\varepsilon^{2}) -\log k(\xi_{j}) -8\log
\abs{\cdot-\xi_j} & \hbox{ on } \partial\Omega,\\
\Delta z = -\Delta u_{j} -\frac{16}{\abs{\cdot-\xi_j}^{2}} & \hbox{
on } \partial\Omega.
\end{cases}
\]

\noindent Let us define $w\equiv-\Delta z$. Thus $w$ is harmonic in $\Omega$
and
\[
\label{Aa10a}\sup_{\Omega} \abs{w}\leq\sup_{\partial\Omega}\abs{w}\leq
C\mu_{j}^{2}\varepsilon^{2}.
\]
We also have $\displaystyle{\sup_{\partial\Omega}\abs{z}\leq C\mu_{j}%
^{2}\varepsilon^{2}}$. Standard elliptic regularity implies
\[
\label{Aa10b}\sup_{\Omega} \abs{z}\leq C (\sup_{\Omega}\abs{w} +
\sup_{\partial\Omega}\abs{z})\leq C\mu_{j}^{2}\varepsilon^{2},
\]
as desired. The second estimate is direct from the definition of $u_{j}$.
\end{proof}

\bigskip\noindent Now, let us write
\begin{equation}
\Omega_{\varepsilon} = \varepsilon^{-1}\Omega, \quad\xi_{j}^{\prime
}=\varepsilon^{-1} \xi_{j}. \label{deltagamma}%
\end{equation}
Then $u$ solves (\ref{In1}) if and only if $v(y)\equiv u(\varepsilon
y)+4\log\rho\varepsilon$ satisfies
\begin{equation}
\label{Aa11}\quad%
\begin{cases}
\Delta^{2} v - k(\varepsilon y) e^{v} = 0, \quad\hbox{ in }\Omega
_{\varepsilon} ,\\
v = 4\log\rho\varepsilon, \quad\Delta v=0, \quad\hbox{ on } \partial
\Omega_{\varepsilon}.\\
\end{cases}
\end{equation}

\medskip\noindent Let us define $V(y)=U(\varepsilon y) + 4\log\rho\varepsilon
$, with $U$ our approximate solution (\ref{Aa5a}). We want to measure the size
of the error of approximation
\begin{equation}
R\equiv\Delta^{2} V - k(\varepsilon y)e^{V} . \label{error}%
\end{equation}
It is convenient to do so in terms of the following norm
\begin{equation}
\label{Aa12}\norm{v}_{*}=\sup_{y\in\Omega_{\varepsilon}} \Big| \Big[\sum
_{j=1}^{m} \frac{1}{(1 + \abs{y-\xi_j'}^{2})^{7/2}} +\varepsilon^{4}
\Big]^{-1}v(y)\Big|
\end{equation}

\noindent Here and in what follows, $C$ denotes a generic constant independent
of $\varepsilon$ and of $\xi\in\mathcal{O}$.

\begin{lema}
\label{AsmLe1} The error $R$ in $(\ref{error})$ satisfies
\[
\norm{R}_{*}\,\le\, C\,\varepsilon\quad\hbox{ as } \varepsilon\to0.
\]

\end{lema}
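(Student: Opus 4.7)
The plan is to compute $R$ explicitly and then control it in the $\ast$-norm by splitting $\Omega_\varepsilon$ into inner regions $A_j := \{|y-\xi_j'|<\delta_0/\varepsilon\}$ around each concentration point and the complementary outer region $A_0$. Since each $H_j$ is biharmonic and each $u_j$ satisfies $\Delta^2 u_j = \rho^4 k(\xi_j)e^{u_j}$ on $\mathbb{R}^4$, and since $\Delta_y^2 = \varepsilon^4 \Delta_x^2$, a direct substitution will give
\[
R(y) = (\rho\varepsilon)^4\Bigl[\sum_{j=1}^m k(\xi_j)\,e^{u_j(\varepsilon y)} \;-\; k(\varepsilon y)\,e^{U(\varepsilon y)}\Bigr].
\]
Using \eqref{rhoeps} the bubble term simplifies to the explicit expression $(\rho\varepsilon)^4 k(\xi_j)\,e^{u_j(\varepsilon y)} = 384\,\mu_j^4/(\mu_j^2 + |y-\xi_j'|^2)^4$, which is the key identity driving the whole estimate.

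In the outer region $A_0$, Lemma \ref{AaLe1} gives $U(x)=\sum_j G(x,\xi_j)+O(\varepsilon^2)$, and $|x-\xi_j|\geq\delta_0$ forces each $e^{u_j(\varepsilon y)}$ and $e^{U(\varepsilon y)}$ to stay bounded, so that $|R| = O((\rho\varepsilon)^4) = O(\varepsilon^8)$. Since the weight appearing in $\|\cdot\|_*$ is at least $\varepsilon^4$ everywhere, this region contributes only $O(\varepsilon^4)$ to the norm, well within the target bound.

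The core of the argument lies in each inner region $A_j$. There Lemma \ref{AaLe1}, combined with the calibration \eqref{muk} of $\mu_j$, whose very purpose is to cancel the constant terms, yields the expansion
\[
U(x) = u_j(x) + \log k(\xi_j) - 4\log(1+\varepsilon^2) + \bigl(\Phi_j(x) - \Phi_j(\xi_j)\bigr) + O(\varepsilon^2),
\]
with $\Phi_j(x) := H(x,\xi_j) + \sum_{k\neq j} G(x,\xi_k)$. Exponentiating and subtracting gives
\[
k(x)e^{U(x)} - k(\xi_j)e^{u_j(x)} = k(\xi_j)e^{u_j(x)}\Bigl[\tfrac{k(x)}{k(\xi_j)}\tfrac{e^{\Phi_j(x)-\Phi_j(\xi_j)}}{(1+\varepsilon^2)^4}\bigl(1+O(\varepsilon^2)\bigr) - 1\Bigr],
\]
and a first-order Taylor expansion of the bracket at $x=\xi_j$ delivers a factor $O(|x-\xi_j|) + O(\varepsilon^2) = O(\varepsilon|y-\xi_j'|) + O(\varepsilon^2)$. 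Combining this with the explicit bubble formula and the weight factor $(1+|y-\xi_j'|^2)^{7/2}$, and checking separately the sub-regions $|y-\xi_j'|\lesssim 1$ and $1\lesssim |y-\xi_j'|\leq \delta_0/\varepsilon$, one sees that the ratio is uniformly $O(\varepsilon)$. Cross terms $k(\xi_k)e^{u_k(\varepsilon y)}$ with $k\neq j$ are $O(\varepsilon^8)$ on $A_j$ and are absorbed. The main technical obstacle is precisely the bookkeeping at the transition scale $|y-\xi_j'|\sim \delta_0/\varepsilon$, where the bubble has decayed to $|y-\xi_j'|^{-8}$ and the Taylor remainder has grown to $\varepsilon|y-\xi_j'|$; one must verify the balance of exponents $-8+7+1 = 0$ to confirm that the weighted bound is uniform. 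No deeper obstruction is present; the difficulty is entirely one of careful scaling.
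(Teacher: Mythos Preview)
Your approach is essentially identical to the paper's: the same inner/outer decomposition, the same use of Lemma~\ref{AaLe1} together with the calibration \eqref{muk} to show that the bracket in the inner region is $O(\varepsilon|y-\xi_j'|)+O(\varepsilon^2)$, and the same weighted conclusion. One harmless slip: in your displayed expansion of $U(x)$ the term $+\log k(\xi_j)$ should not be there (the $\log k(\xi_j)$ from \eqref{Aa6} cancels against the one hidden in $-4\log\mu_j$ via \eqref{muk}); your next displayed line with the factor $k(x)/k(\xi_j)$ is nevertheless the correct one, so this is only a typo.
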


\begin{proof}
We assume first $\displaystyle{\abs{y-\xi_k'}<\frac{\delta_{0}}{\varepsilon}}
$, for some index $k$. We have
\begin{align*}
\label{Asm1}\Delta^{2} V(y) = \rho^{4}\sum_{j=1}^{m}
k(\xi_{j})e^{u_{j}( \varepsilon y)}= \frac{384 \,
\mu_{k}^{4}}{(\mu_{k}^{2} +\abs{y-\xi_k'}^{2} )^{4}} +
O(\varepsilon^{8}).
\end{align*}
Let us estimate $k(\varepsilon y)e^{V(y)}$. By (\ref{Aa6}) and the definition
of $\mu_{j}^{\prime}s$,
\begin{align*}
H_{k}(x)  &  = H(\xi_{k}, \xi_{k}) - 4\log\mu_{k} + \log k(\xi_{j}) +
O(\mu_{k}^{2} \varepsilon^{2}) + O(\abs{x-\xi_k})\\
&  = -\sum_{j\neq k} G(\xi_{j},\xi_{k})+ O(\mu_{k}^{2}\varepsilon^{2}) +
O(\abs{x-\xi_k}),
\end{align*}
\label{Asm2} and if $j\neq k$, by (\ref{Aa8})
\[
\label{Asm3a}U_{j}(x) = u_{j}(x) + H_{j}(x) = G(\xi_{j},\xi_{k}) +
O(\abs{x-\xi_k}) + O(\mu_{j}^{2} \varepsilon^{2}).
\]
Then
\begin{equation}
\label{Asm3b}H_{k}(x)+\sum_{j\neq k}U_{j}(x)= O(\varepsilon^{2}) +
O(\abs{x-\xi_k}).
\end{equation}
Therefore,
\begin{align*}
\label{Asm3}k(\varepsilon y)e^{V(y)}  &  = k(\varepsilon y)\varepsilon^{4}
\rho^{4} \exp\Big\{u_{k}(\varepsilon y) + H_{k}(\varepsilon y) + \sum_{j\neq
k} U_{j}(\varepsilon y)\Big\}\\
&  = \frac{384\mu_{k}^{4}k(\varepsilon y)}{( \mu_{k}^{2} + \abs{y-\xi_k'}^{2}
)^{4}k(\xi_{k})} \Big\{ 1 + O(\varepsilon\abs{y-\xi_k'}) + O(\varepsilon^{2})
\Big\}\\
&  = \frac{384\mu_{k}^{4}}{(\mu_{k}^{2} + \abs{y-\xi_k'}^{2} )^{4}}\;\Big\{ 1
+ O(\varepsilon\abs{y-\xi_k'}) \Big\}
\end{align*}
We can conclude that in this region
\[
|R(y)| \leq C \frac{\varepsilon\abs{y-\xi_k'}}{ (1 +
\abs{y-\xi_k'}^{2} )^{4}} + O(\varepsilon^{4}).
\]
If $\displaystyle{\abs{y-\xi_j'}\geq\frac{\delta_{0}}{\varepsilon}} $ for all
$j$, using (\ref{Aa6}), (\ref{Aa7}) and (\ref{Aa8}) we obtain
\[
\Delta^{2} V = O(\varepsilon^{4} \rho^4)\quad\hbox{ and } \quad
k(\varepsilon y)e^{V(y)} = O(\varepsilon^{4} \rho^4).
\]
Hence, in this region,
\[
R(y) = O(\varepsilon^{8})
\]
so that finally
\begin{align}
\label{Asm6}\norm{R}_{*} = O(\varepsilon).\nonumber
\end{align}

\end{proof}

\medskip\noindent Next we consider the energy functional associated with
(\ref{In1})
\begin{equation}
\label{In4}J_{\rho}[u]=\frac12 \int_{\Omega} (\Delta u)^{2} - \rho^{4}
\int_{\Omega} k(x)e^{u}, \quad u\in H^{2}(\Omega)\cap H_{0}^{1}(\Omega).
\end{equation}
We will give an asymptotic estimate of $J_{\rho}[U]$, where $U(\xi)$ is the
approximation (\ref{Aa5a}). Instead of $\rho$, we use the parameter
$\varepsilon$ (defined in (\ref{rhoeps})) to obtain the following expansion:

\begin{lema}
\label{Ene1} With the election of $\mu_{j} $'s given by (\ref{muk}),
\begin{equation}
\label{AesTeo1}J_{\rho}[U]=-128\, \pi^{2} \, m + 256 \, \pi^{2} \, m
\abs{\log \ve} + 32 \, \pi^{2} \, \varphi_{m}(\xi) + \varepsilon
\Theta_{\varepsilon}(\xi),
\end{equation}
where $\Theta_{\varepsilon}(\xi)$ is uniformly bounded together with its
derivatives if $\xi\in\mathcal{O}$, and $\varphi_{m}$ is the function defined
in (\ref{Rob}).
\end{lema}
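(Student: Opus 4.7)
The plan is to expand $J_{\rho}[U]$ by handling its quadratic and nonlinear parts separately. The starting observation is that each $U_j$ satisfies the Navier boundary conditions $U_j = \Delta U_j = 0$ on $\partial\Omega$, and since $H_j$ is biharmonic one has $\Delta^{2}U_j = \Delta^{2}u_j = \rho^{4}k(\xi_j)e^{u_j}$. Green's formula therefore gives
\[
\int_{\Omega}\Delta U_i\,\Delta U_j\,dx = \int_{\Omega}U_i\,\Delta^{2}U_j\,dx = \rho^{4}k(\xi_j)\int_{\Omega}U_i\,e^{u_j}\,dx.
\]
Summing in $i,j$ yields
\[
\frac{1}{2}\int_{\Omega}(\Delta U)^{2}\,dx = \frac{\rho^{4}}{2}\sum_{j=1}^{m}k(\xi_j)\int_{\Omega}U\,e^{u_j}\,dx,
\]
which reduces the Dirichlet contribution to estimating each $\rho^{4}k(\xi_j)\int U\,e^{u_j}$.

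Each such integral concentrates at $\xi_j$: the contribution from $\Omega\setminus B_{\delta_{0}}(\xi_j)$ is $O(\varepsilon^{4})$ by the explicit form of $u_j$, while in the inner region the change of variables $x = \xi_j + \mu_j\varepsilon z$ turns $\rho^{4}k(\xi_j)e^{u_j(x)}\,dx$ into $\frac{384}{(1+|z|^{2})^{4}}\,dz$. Combining the explicit formula for $u_j$ with (\ref{Aa6}) for $H_j$ and (\ref{Aa8}) for the $U_i$ with $i\neq j$ gives the inner expansion
\[
U(\xi_j + \mu_j\varepsilon z) = -4\log(1+|z|^{2}) - 8\log(\mu_j\varepsilon) + H(\xi_j,\xi_j) + \sum_{i\neq j}G(\xi_j,\xi_i) + O(\varepsilon|z| + \varepsilon^{2}).
\]
Integration against $\frac{384}{(1+|z|^{2})^{4}}\,dz$ then uses only the standard Euclidean integrals $\int_{\mathbb{R}^{4}} \frac{dz}{(1+|z|^{2})^{4}}$ and $\int_{\mathbb{R}^{4}} \frac{\log(1+|z|^{2})\,dz}{(1+|z|^{2})^{4}}$ (polar coordinates plus $t = 1+|z|^{2}$), and the first-order Taylor corrections of $H(\xi_j + \mu_j\varepsilon z,\xi_j)$ and $k(\xi_j + \mu_j\varepsilon z)$ integrate to zero by odd symmetry in $z$.

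For the nonlinear term I would again localize and rescale near each $\xi_j$. The identity (\ref{Asm3b}), which holds precisely because of the choice (\ref{muk}) of $\mu_j$, says $H_k + \sum_{i\neq k} U_i = O(\varepsilon + |x-\xi_k|)$ near $\xi_k$; hence $e^{U} = e^{u_k}(1 + O(\varepsilon))$ on $B_{\delta_{0}}(\xi_k)$, so
\[
\rho^{4}\int_{\Omega}k(x)\,e^{U}\,dx = 64\pi^{2}m + O(\varepsilon).
\]
Substituting (\ref{muk}) to eliminate $\log\mu_j$ in the Dirichlet expansion and using the symmetry of $G$, the terms involving $\log k(\xi_j)$, $H(\xi_j,\xi_j)$ and $G(\xi_i,\xi_j)$ collapse exactly into $32\pi^{2}\varphi_{m}(\xi)$, the $\log\varepsilon$ pieces produce $256\pi^{2}m|\log\varepsilon|$, and the remaining numerical constants merge into $-128\pi^{2}m$.

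The main technical obstacle is not the leading-order identification but the upgrade of the pointwise expansion into a $C^{1}$-in-$\xi$ bound on $\Theta_{\varepsilon}$. For this I would differentiate (\ref{Aa6})--(\ref{Aa8}) in $\xi$, noting that $\mu_j = \mu_j(\xi)$ is smooth via (\ref{muk}); the key point is that each $\xi$-derivative of $H_j - H(\cdot,\xi_j) + \textrm{const}$ again solves a biharmonic Navier problem whose boundary data are $O(\mu_j^{2}\varepsilon^{2})$ in $C^{0}$. Standard elliptic regularity then propagates these bounds inside $\Omega$, which together with the uniform smallness of the rescaled integral remainders ensures that $\Theta_{\varepsilon}(\xi)$ and its $\xi$-derivatives are uniformly bounded on $\mathcal{O}$.
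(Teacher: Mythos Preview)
Your proof is correct and follows essentially the same approach as the paper: both reduce the quadratic term via Green's formula and $\Delta^{2}U_j = \rho^{4}k(\xi_j)e^{u_j}$, localize and rescale $x = \xi_j + \mu_j\varepsilon z$, invoke the expansions of Lemma~\ref{AaLe1} together with the same two explicit $\mathbb{R}^{4}$ integrals, and then substitute \eqref{muk} to identify $\varphi_m$. The only cosmetic difference is that the paper keeps the diagonal and off-diagonal pairings $\int U_j e^{u_j}$ and $\int U_i e^{u_j}$ ($i\neq j$) separate, whereas you compute $\int U\,e^{u_j}$ in one stroke; your remark on the odd symmetry of the first-order corrections and the more explicit discussion of the $C^{1}$-in-$\xi$ control of $\Theta_\varepsilon$ are welcome additions that the paper only asserts.
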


\begin{proof}
We have
\begin{align*}
J_{\rho}[U]  &  = \frac12 \sum_{j=1}^{m} \int_{\Omega} (\Delta U_{j})^{2} +
\frac12 \sum_{j\neq i}\int_{\Omega} \Delta U_{j}\,\Delta U_{i} -\rho^{4}%
\int_{\Omega} k(x) e^{U}\\
&  \equiv I_{1} + I_{2} + I_{3};
\end{align*}
Note that $\Delta^{2} U_{j}=\Delta^{2} u_{j} = \rho^{4} k(\xi_{j})e^{u_{j}}$
in $\Omega$ and $U_{j}=\Delta U_{j}=0$ in $\partial\Omega$. Then
\[
I_{1}= \frac12 \rho^{4} \sum_{j=1}^{m} k(\xi_{j})\int_{\Omega} e^{u_{j}}U_{j}
\; \hbox{ and }\; I_{2}= \frac12 \rho^{4} \sum_{j\neq i}k(\xi_{j})
\int_{\Omega} e^{u_{j}}U_{i}.
\]
Let us define the change of variables $x=\xi_{j}+\mu_{j}\varepsilon y$, where
$x\in\Omega$ and $y\in\Omega_{j}\equiv(\mu_{j} \varepsilon)^{-1}(\Omega
-\xi_{j})$. Using Lemma~\ref{AaLe1} and the definition of $\rho$ in terms of
$\varepsilon$ in (\ref{rhoeps}) we obtain
\begin{align*}
I_{1}  &  = 192 \sum_{j=1}^{m} \int_{\Omega_{j}} \frac{1}{(1+\abs{y}^{2})^{4}}
\left\{  4\log\frac{1}{1+\abs{y}^{2}}-8\log\mu_{j}\varepsilon+ H(\xi_{j}%
,\xi_{j})+ O(\mu_{j}\varepsilon\abs{y})\right\} \\
&  = 32\pi^{2} \sum_{j=1}^{m} \Big\{ H(\xi_{j},\xi_{j}) -8\log\mu
_{j}\varepsilon\Big\} -64 \, \pi^{2} m + O\left(  \varepsilon\mu_{j}
\int_{\Omega_{j}} \frac{\abs{y}}{(1+\abs{y}^{2})^{4}}\right) \\
&  = 32\pi^{2} \sum_{j=1}^{m} \Big\{ H(\xi_{j},\xi_{j}) -8\log\mu
_{j}\varepsilon\Big\} -64 \, \pi^{2} m + \varepsilon\Theta(\xi),
\end{align*}
where $\Theta_{\varepsilon}(\xi)$ is bounded together with its derivates if
$\xi\in\mathcal{O}$. Besides we have used the explicit values
$\displaystyle{\int_{\mathbb{R}^{4}}\frac{1}{(1+\abs{y}^{2})^{4}}=\frac
{\pi^{2}}{6}}$, and $\displaystyle{\int_{\mathbb{R}^{4}}\frac{\log
(1+\abs{y}^{2})}{(1+\abs{y}^{2})^{4}}=\frac{\pi^{2}}{12}}$.

\medskip\noindent We consider now $I_{2}$. As above,
\begin{align*}
\frac12\rho^{4} \int_{\Omega} e^{u_{j}}U_{i}  &  = \int_{\Omega_{j}}
\frac{192}{( 1 + \abs{y}^{2} )^{4}} \Big\{ u_{i}(\xi_{j}+\mu_{j}\varepsilon y)
+ H_{i}(\xi_{j} + \mu_{j}\varepsilon y) \Big\}\\
&  = \int_{\Omega_{j}} \frac{192}{( 1 + \abs{y}^{2})^{4}} \Big\{ u_{i}(
\xi_{j} + \mu_{j}\varepsilon y) -4\log\mu_{i}(1+\varepsilon^{2}) + \log
k(\xi_{i})+8\log\abs{\xi_j -\xi_i}\Big\}\\
&  + \int_{\Omega_{j}} \frac{192}{( 1 + \abs{y}^{2})^{4}} \Big\{ H_{i}(\xi_{j}
+ \mu_{j} \varepsilon y) - H_{i}(\xi_{j}) \Big\}\\
&  + \int_{\Omega_{j}} \frac{192}{( 1 + \abs{y}^{2})^{4}} \Big\{ H_{i}(\xi
_{j}) - H(\xi_{j},\xi_{i})+4\log\mu_{i}(1+\varepsilon^{2})-\log k(\xi_{i})
\Big\}\\
&  + G(\xi_{j},\xi_{i})\int_{\Omega_{j}} \frac{192}{( 1 + \abs{y}^{2} )^{4}}\\
&  = 32\pi^{2} G(\xi_{i},\xi_{j}) + O\left(  \varepsilon\mu_{j} \int
_{\Omega_{j}} \frac{\abs{y}}{( 1 + \abs{y}^{2})^{4}} \right)  + O(\mu_{j}%
^{2}\varepsilon^{2})\\
&  = 32\pi^{2} G(\xi_{i},\xi_{j}) + \varepsilon\Theta_{\varepsilon}(\xi).
\end{align*}
Thus
\begin{equation}
\label{I2}I_{2}=32\pi^{2}\sum_{j\neq i} G(\xi_{i},\xi_{j}) + \varepsilon
\Theta_{\varepsilon}(\xi).
\end{equation}
Finally we consider $I_{3}$. Let us denote $A_{j}\equiv B(\xi_{j},\delta_{0})$
and $x=\xi_{j} + \mu_{j}\varepsilon y$. Then using again Lemma~\ref{AaLe1}
\begin{align*}
I_{3}  &  = -\rho^{4} \sum_{j=1}^{m} \int_{A_{j}} k(x)e^{U} + O(\varepsilon
^{4})\\
&  = -\rho^{4} \sum_{j=1}^{m} \int_{B(0,\frac{\delta_{0}}{\mu_{j}\varepsilon
})}\frac{k(\xi_{j} + \mu_{j}\varepsilon y)}{k(\xi_{j})(1+\abs{y}^{2})^{4}}
\frac{(1+\varepsilon^{2})^{4}}{\varepsilon^{4}} \left(  1+ O(\varepsilon
\mu_{j}\abs{y})\right)  + O(\varepsilon^{4})\\
&  = -384 \, m \int_{\mathbb{R}^{4}}\frac{1}{(1+\abs{y}^{2})^{4}} + O\left(
\varepsilon\mu_{j}\int_{\mathbb{R}^{4}}\frac{\abs{y}}{(1+\abs{y}^{2})^{4}%
}\right) \\
&  = -64\pi^{2} m + \varepsilon\Theta_{\varepsilon}(\xi),
\end{align*}
uniformly in $\xi\in\mathcal{O}$. Thus, we can conclude the following
expansion of $J_{\rho}[U]$:
\begin{equation}
\label{Ener}J_{\rho}[U]=-128 \, m \, \pi^{2} + 256 \, m \, \pi^{2}
\abs{\log \ve} + 32 \, \pi^{2} \, \varphi_{m}(\xi) + \varepsilon
\Theta_{\varepsilon}(\xi),
\end{equation}
where $\Theta_{\varepsilon}(\xi)$ is a bounded function together with is
derivates in the region $\xi\in\mathcal{O}$, $\varphi_{m}$ defined as in
(\ref{Rob}) and $\displaystyle{\rho^{4} = \frac{384 \varepsilon^{4}%
}{(1+\varepsilon^{2})^{4}}}$.
\end{proof}

\medskip\noindent In the subsequent analysis we will stay in the expanded
variable $y\in\Omega_{\varepsilon}$ so that we will look for solutions of
problem (\ref{Aa11}) in the form $v=V +\psi$, where $\psi$ will represent a
lower order correction. In terms of $\psi$, problem (\ref{Aa11}) now reads
\begin{equation}
\label{Asm7}\quad%
\begin{cases}
\mathcal{L}_{\varepsilon}(\psi) \equiv\Delta^{2} \psi- W \psi= -R + N(\psi) &
\hbox{ in } \Omega_{\varepsilon},\\
\psi= \Delta\psi= 0 & \hbox{ on } \partial\Omega_{\varepsilon},
\end{cases}
\end{equation}
where
\begin{equation}
\label{neerre}N(\psi) = W[e^{\psi}- \psi- 1] \quad{\mbox {and}} \quad
W=k(\varepsilon y)e^{V}.
\end{equation}
\medskip\noindent Note that
\begin{equation}
\label{AsmLe2a}W(y)=\sum_{j=1}^{m} \frac{384 \, \mu_{j}^{4}}{(\mu_{j}^{2} +
\abs{y-\xi_j'}^{2})^{4}}(1+O(\varepsilon\abs{y-\xi_j'})) \quad\hbox{for }
y\in\Omega_{\varepsilon}.
\end{equation}
This fact, together with the definition of $N(\psi)$ given in \eqref{neerre},
give the validity of the following

\begin{lema}
\label{AsmLe2} For $\xi\in\mathcal{O}$, $\norm{W}_{*}=O(1)$ and
$\norm{N(\psi)}_{*}=O(\norm{\psi}^{2}_{\infty})$ as $\norm{\psi}_{\infty}
\to0$.
\end{lema}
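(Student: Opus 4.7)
The plan is to exploit the explicit representation of $W$ recorded in \eqref{AsmLe2a}, together with the shape of the weight
\[
w(y) := \sum_{j=1}^{m} \frac{1}{(1+\abs{y-\xi_j'}^{2})^{7/2}} + \ve^{4}
\]
appearing in the definition \eqref{Aa12} of the $*$-norm. The weight is tailored precisely to dominate the mass of $W$ uniformly on $\Omega_\ve$, and once this domination is quantified, the nonlinear bound will follow from an elementary Taylor estimate applied pointwise.

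For the first assertion, I would split $\Omega_\ve$ into the balls $A_k := \{\abs{y-\xi_k'}\le \delta_{0}/\ve\}$, $k=1,\dots,m$, and the exterior region $A_{0} := \{\abs{y-\xi_j'}\ge \delta_{0}/\ve \text{ for all } j\}$. On each $A_k$, the expansion \eqref{AsmLe2a} combined with the uniform bound $\mu_{k}\in[1/C,C]$ from \eqref{mu2}, and the observation that $\ve\abs{y-\xi_k'}\le \delta_{0}$ throughout $A_k$, yields
\[
\abs{W(y)} \;\le\; \frac{C\,\mu_{k}^{4}}{(\mu_{k}^{2}+\abs{y-\xi_k'}^{2})^{4}} \;\le\; \frac{C}{(1+\abs{y-\xi_k'}^{2})^{4}} \;\le\; \frac{C}{(1+\abs{y-\xi_k'}^{2})^{7/2}} \;\le\; C\,w(y).
\]
On $A_{0}$, the crude bound $k(\ve y)e^{V(y)}=O(\ve^{8})$ already derived in the proof of Lemma~\ref{AsmLe1} applies, so $\abs{W(y)}\le C\ve^{8}\le C\ve^{4}\le C\,w(y)$. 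Piecing the two regions together gives $\norm{W}_{*}=O(1)$.

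For the nonlinear term, I would use the elementary inequality $\abs{e^{s}-s-1}\le \tfrac{1}{2}\,s^{2}\,e^{\abs{s}}$, valid for every real $s$. Whenever $\norm{\psi}_{\infty}\le 1$, this gives, pointwise on $\Omega_\ve$,
\[
\abs{N(\psi)(y)} \;=\; W(y)\,\abs{e^{\psi(y)}-\psi(y)-1} \;\le\; C\,W(y)\,\norm{\psi}_{\infty}^{2}.
\]
Dividing by $w(y)$ and taking the supremum yields $\norm{N(\psi)}_{*}\le C\,\norm{W}_{*}\,\norm{\psi}_{\infty}^{2} = O(\norm{\psi}_{\infty}^{2})$ as $\norm{\psi}_{\infty}\to 0$. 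I do not foresee any substantive obstacle: the only delicate point is the transition between the regions where \eqref{AsmLe2a} is sharp and where it is not, but the $\ve^{4}$ summand built into the weight $w$ is exactly what absorbs the residual contribution on $A_{0}$.
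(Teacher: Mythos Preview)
Your argument is correct and is exactly the natural way to fill in the details the paper omits: the paper simply asserts that the lemma follows from the pointwise expansion \eqref{AsmLe2a} and the definition \eqref{neerre}, and your region-splitting together with the elementary Taylor bound $\abs{e^{s}-s-1}\le \tfrac{1}{2}s^{2}e^{\abs{s}}$ is precisely how one verifies this. The only cosmetic omission is that on $A_k$ the sum in \eqref{AsmLe2a} still contains the terms with $j\neq k$, but since $\abs{y-\xi_j'}\ge \delta_0/\ve$ there, those contribute $O(\ve^{8})$ and are absorbed by the $\ve^{4}$ summand in $w(y)$, exactly as in your treatment of $A_0$.
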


\bigskip


\section{The linearized problem}

\label{Lp}

\noindent In this section we develop a solvability theory for the fourth-order
linear operator $\mathcal{L}_{\varepsilon}$ defined in (\ref{Asm7}) under
suitable orthogonality conditions. We consider
\begin{equation}
\label{Lp1}\mathcal{L}_{\varepsilon}(\psi)\equiv\Delta^{2} \psi- W(y)\psi,
\end{equation}
where $W(y)$ was introduced in (\ref{Asm7}). By expression (\ref{AsmLe2a}) and
setting $z= y-\xi_{j}^{\prime}$, one can easily see that formally the operator
$\mathcal{L}_{\varepsilon}$ approaches, as $\varepsilon\to0 $,
the operator in $\mathbb{R}^{4}$
\begin{equation}
\label{Lp3}\mathcal{L}_{j}(\psi)\equiv\Delta^{2} \psi- \frac{384\, \mu_{j}%
^{4}}{(\mu_{j}^{2}+\abs{z}^{2})^{4}}\psi,
\end{equation}
namely, equation $\Delta^{2} v - e^{v} =0$ linearized around the radial
solution $v_{j}(z)=\log\frac{384\mu_{j}^{4}}{(\mu_{j}^{2} + \abs{z}^{2})^{4}}%
$. Thus the key point to develop a satisfactory solvability theory for the
operator $\mathcal{L}_{\varepsilon}$ is the non-degeneracy of $v_{j}$ up to
the natural invariances of the equation under translations and dilations. In
fact, if we set
\begin{align}
\label{Lp4}Y_{0j}(z)  &  = 4\frac{\abs{z}^{2}-\mu_{j}^{2}}{\abs{z}^{2} +
\mu_{j}^{2}},\\
Y_{ij}(z)  &  = \frac{8z_{i}}{ \mu_{j}^{2} +\abs{z}^{2}}, \quad i=1,\dots,4,
\end{align}
the only bounded solutions of $\mathcal{L}_{j}(\psi)=0$ in $\mathbb{R}^{4}$
are linear combinations of $Y_{ij}$, $i=0,\dots,4$; see Lemma $3.1$ in
\cite{BaPa} for a proof.

\medskip\noindent We define for $i=0,\dots,4$ and $j=1,\dots,m$,
\[
Z_{ij}(y)\equiv Y_{ij}\left(  y-\xi_{j}^{\prime}\right)  ,\; i=0,\dots,4.
\]
Additionally, let us consider $R_{0}$ a large but fixed number and $\chi$ a
radial and smooth cut-off function with $\chi\equiv1$ in $B(0,R_{0})$ and
$\chi\equiv0$ in $\mathbb{R}^{4} \setminus B(0,R_{0}+1)$. Let
\[
\chi_{j}(y)=\chi(\abs{y-\xi_j'}),\quad j=1,\ldots,m.
\]
Given $h\in L^{\infty}(\Omega_{\varepsilon})$, we consider the problem of
finding a function $\psi$ such that for certain scalars $c_{ij}$ one has
\begin{align}
\label{Lp5}%
\begin{cases}
& \mathcal{L}_{\varepsilon}(\psi)= h + \sum_{i=1}^{4} \sum_{j=1}^{m}
c_{ij}\chi_{j}Z_{ij}, \quad\hbox{ in } \Omega_{\varepsilon},\\
& \psi=\Delta\psi= 0, \quad\hbox{ on } \partial\Omega_{\varepsilon},\\
& \int_{\Omega_{\varepsilon}}\chi_{j}Z_{ij}\psi=0, \hbox{ for all }
i=1,\dots,4,\; j=1,\dots, m.
\end{cases}
\end{align}
We will establish a priori estimates for this problem. To this end we shall
introduce an adapted norm in $\Omega_{\varepsilon}$, which has been introduced
previously in \cite{KMP1}. Given $\psi:\Omega_{\varepsilon}\to\mathbb{R}$ and
$\alpha\in\mathbb{N}^{m}$ we define
\begin{equation}
\label{norm2}\norm{\psi}_{**}\equiv\sum_{j=1}^{m}\norm{\psi}_{C^{4,\alpha
}(r_{j}<2)} + \sum_{j=1}^{m} \sum_{\abs{\alpha} \leq3}%
\norm{r_j^{\abs{\alpha}} D^{\alpha}\psi}_{L^{\infty}(r_{j}\geq2)},
\end{equation}
with $r_{j}=\abs{y-\xi_j'}$.

\begin{prop}
\label{LpTeo1} There exist positive constants $\varepsilon_{0}>0$ and $C>0$
such that for any $h\in L^{\infty}(\Omega_{\varepsilon})$, with $\Vert
h\Vert_{\ast}<\infty$, and any $\xi\in\mathcal{O}$, there is a unique solution
$\psi=T(h)$ to Problem (\ref{Lp5}) for all $\varepsilon\leq\varepsilon_{0}$,
which defines a linear operator of $h$. Besides, we have the estimate
\begin{equation}
\norm{T(h)}_{\ast\ast}\leq C\,\abs{\log \ve}\,\norm{h}_{\ast}. \label{LpTeo1a}%
\end{equation}

\end{prop}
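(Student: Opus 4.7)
The plan is to prove the a priori estimate \eqref{LpTeo1a} by contradiction, and then deduce existence, uniqueness and linearity of $T$ through the Fredholm alternative. Assume that for some sequences $\varepsilon_n \to 0$, $\xi^{(n)} \in \mathcal{O}$, and $h_n$, there exist solutions $(\psi_n, c^{(n)}_{ij})$ of \eqref{Lp5} with
\begin{equation*}
\|\psi_n\|_{**} = 1, \qquad |\log\varepsilon_n|\,\|h_n\|_{*} \longrightarrow 0.
\end{equation*}
A first, routine step is to estimate the multipliers: testing against $Z_{ij}$, integrating by parts, and using $\mathcal{L}_j Y_{ij} = 0$ on $\mathbb{R}^4$ together with the localization of $W$ near each $\xi'_j$, one obtains a block-diagonally dominant linear system for the $c^{(n)}_{ij}$ whose solution satisfies $|c^{(n)}_{ij}| = O(\|h_n\|_*) + o(1) \to 0$.

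Next I would carry out a blow-up analysis at each concentration point. Setting $\tilde\psi_{j,n}(z) := \psi_n(\xi'_j + z)$, the $C^{4,\alpha}$-control built into $\|\cdot\|_{**}$ together with standard interior regularity produce, up to a subsequence, $\tilde\psi_{j,n} \to \psi_j^\infty$ in $C^{4,\alpha}_{\mathrm{loc}}(\mathbb{R}^4)$, where $\psi_j^\infty$ is a bounded solution of $\mathcal{L}_j(\psi_j^\infty) = 0$. By the classification of bounded kernel elements recalled from \cite{BaPa}, $\psi_j^\infty = \sum_{i=0}^{4} a_{ij}\,Y_{ij}$; and passing the orthogonality identity $\int \chi_j Z_{ij}\psi_n = 0$ to the limit for $i = 1,\dots,4$ forces $a_{ij} = 0$ for those indices.

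The main obstacle is ruling out $a_{0j}\neq 0$, since the dilation mode is not constrained by the orthogonality conditions imposed in \eqref{Lp5}. The key observation is that $Y_{0j}(z) \to 4$ as $|z| \to \infty$, so a nonzero $a_{0j}$ would make $\psi_n$ approach the nonzero constant $4 a_{0j}$ in the outer region, in conflict with the Dirichlet data $\psi_n = \Delta\psi_n = 0$ on $\partial\Omega_\varepsilon$. To formalize this, in the exterior region $\Omega_\varepsilon \setminus \bigcup_j B(\xi'_j, R)$ the weight $W = O(r_j^{-8})$ is small, so $\psi_n$ satisfies $\Delta^2 \psi_n = f_n$ with $\|f_n\|_* \to 0$ and zero boundary conditions; representing $\psi_n$ via the Green function of the biharmonic problem \eqref{Un1} --- whose logarithmic singularity in dimension four is the origin of the $|\log\varepsilon|$ loss in \eqref{LpTeo1a} --- yields a uniform bound
\begin{equation*}
\sup_{r_j \ge R} |\psi_n| \,\le\, C |\log\varepsilon_n|\,\|h_n\|_* + C\max_j \sup_{r_j = R} |\psi_n|,
\end{equation*}
whose first term tends to zero. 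Matching for large $R$ with the inner asymptotics $\psi_n(\xi'_j + z) \approx a_{0j}Y_{0j}(z) \to 4 a_{0j}$ forces $a_{0j} = 0$ for every $j$.

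Once $\psi_j^\infty \equiv 0$ for all $j$, the $C^{4,\alpha}$ piece of $\|\psi_n\|_{**}$ tends to zero, and the outer bound propagates this decay to the weighted-derivative pieces, contradicting $\|\psi_n\|_{**} = 1$ and establishing \eqref{LpTeo1a}. Existence, uniqueness and linearity of $T$ then follow by rewriting \eqref{Lp5} as $\psi = K\psi + \tilde h$ on
\begin{equation*}
\mathcal{H} := \bigl\{\psi \in H^2(\Omega_\varepsilon) \cap H^1_0(\Omega_\varepsilon) : \int_{\Omega_\varepsilon} \chi_j Z_{ij}\psi = 0,\ i=1,\dots,4,\ j=1,\dots,m\bigr\},
\end{equation*}
with $K$ compact, since $\Omega_\varepsilon$ is bounded and $W \in L^\infty$; \eqref{LpTeo1a} yields injectivity and hence invertibility by the Fredholm alternative, and the multipliers $c_{ij}$ are recovered from the orthogonal projection onto $\mathrm{span}\{\chi_j Z_{ij}\}$.
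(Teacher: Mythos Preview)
Your overall architecture (contradiction plus blow-up plus Fredholm for existence) matches the paper, and your treatment of the multipliers $c_{ij}$ and the final Fredholm argument are essentially the same as theirs. The genuine gap is the step where you rule out the dilation mode $a_{0j}Y_{0j}$.

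Your outer estimate
\[
\sup_{r_j\ge R}|\psi_n|\;\le\; C|\log\varepsilon_n|\,\|h_n\|_*\;+\;C\max_j\sup_{r_j=R}|\psi_n|
\]
bounds the exterior values of $\psi_n$ in terms of its values on $\{r_j=R\}$; it cannot force those inner-boundary values to vanish. If the blow-up limit were $a_{0j}Y_{0j}$, then $\psi_n|_{\{r_j=R\}}\to a_{0j}Y_{0j}(R)\approx 4a_{0j}$ and your inequality only gives $\sup|\psi_n|\le C'|a_{0j}|+o(1)$, which is no contradiction. The ``matching'' you invoke would need the opposite inequality --- a bound on $\psi_n$ at fixed finite $r_j$ coming from the far boundary $\partial\Omega_\varepsilon$ --- and that is exactly what fails: there \emph{are} functions of size $O(1)$ near $\xi_j'$, satisfying Navier conditions on $\partial\Omega_\varepsilon$, for which $\mathcal{L}_\varepsilon$ is only of order $1/|\log\varepsilon|$, so the dilation direction is a genuine near-kernel and cannot be excluded by a qualitative matching argument.

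The paper closes this gap differently. It first proves the a~priori estimate (with no logarithmic loss) under the \emph{enlarged} orthogonality $\int\chi_jZ_{ij}\psi=0$ for $i=0,\dots,4$; your blow-up argument then goes through because the $i=0$ condition kills $a_{0j}$. To pass from this to \eqref{Lp5}, the paper \emph{constructs} the near-kernel element explicitly: a global correction
\[
\tilde Z_{0j}\;=\;\eta_{1j}Z_{0j}+(1-\eta_{1j})\eta_{2j}\bigl(Z_{0j}-1+a_{0j}G(\varepsilon y,\xi_j)\bigr),\qquad a_{0j}=\bigl(H(\xi_j,\xi_j)-8\log(\varepsilon R)\bigr)^{-1},
\]
which satisfies the Navier boundary conditions, agrees with $Z_{0j}$ near $\xi_j'$, and has $\|\mathcal{L}_\varepsilon(\tilde Z_{0j})\|_*=O(1/|\log\varepsilon|)$ together with the nondegeneracy $\bigl|\int\mathcal{L}_\varepsilon(\tilde Z_{0j})\tilde Z_{0j}\bigr|=E/|\log\varepsilon|\,(1+o(1))$, $E>0$. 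Writing $\tilde\psi=\psi+\sum_j d_j\tilde Z_{0j}$ so that $\tilde\psi$ satisfies the enlarged orthogonality, one tests against $\tilde Z_{0j}$ to get $|d_j|\le C|\log\varepsilon|\,\|h\|_*$, and the logarithmic loss in \eqref{LpTeo1a} comes precisely from this computation. Your intuition that the $|\log\varepsilon|$ originates in the logarithmic Green function in dimension four is correct, but it enters through this quantitative test-function construction, not through an outer barrier.
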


\noindent The proof will be split into a serie of lemmas which we state and
prove next. The first step is to obtain a priori estimates for the problem
\begin{align}
\label{Lp19}%
\begin{cases}
& \mathcal{L}_{\varepsilon}(\psi)= h, \quad\hbox{ in } \Omega_{\varepsilon},\\
& \psi=\Delta\psi= 0, \quad\hbox{ on } \partial\Omega_{\varepsilon},\\
& \int_{\Omega_{\varepsilon}}\chi_{j}Z_{ij}\psi=0, \hbox{ for all }
i=0,\dots,4,\; j=1,\dots, m.
\end{cases}
\end{align}
which involves more orthogonality conditions than those in (\ref{Lp5}).
\noindent We have the following estimate.

\begin{lema}
\label{LpLe3} There exist positive constants $\varepsilon_{0}>0$ and $C>0$
such that for any $\psi$ solution of Problem $(\ref{Lp19})$ with $h \in
L^{\infty}(\Omega_{\varepsilon}) $, $\| h \|_{*} <\infty$, and $\xi
\in\mathcal{O}$, then
\begin{equation}
\norm{\psi}_{**}\leq C\, \norm{h}_{*}%
\end{equation}
for all $\varepsilon\in(0, \varepsilon_{0} )$.
\end{lema}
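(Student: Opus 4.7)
The plan is to argue by contradiction. Suppose the estimate fails: then there exist sequences $\varepsilon_{n}\to 0$, $\xi^{(n)}\in\mathcal{O}$, right-hand sides $h_{n}$ with $\|h_{n}\|_{*}\to 0$, and solutions $\psi_{n}$ of \eqref{Lp19} normalized by $\|\psi_{n}\|_{**}=1$. The strategy is to show $\psi_{n}\to 0$ both on bounded neighbourhoods of each concentration point and in the weighted outer region, thus contradicting the normalization.

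Step 1 (inner analysis). For each $j=1,\dots,m$, freeze coordinates $z=y-\xi_{j}^{(n)\,\prime}$ and set $\tilde{\psi}_{j,n}(z):=\psi_{n}(\xi_{j}^{(n)\,\prime}+z)$. On any compact set $K\subset\mathbb{R}^{4}$, the bound $\|\psi_{n}\|_{**}\le 1$ controls $\tilde{\psi}_{j,n}$ in $C^{4,\alpha}(K)$ uniformly, and the coefficient $W(y)$ restricted to this window converges, by \eqref{AsmLe2a}, to the radial potential $\tfrac{384\,\mu_{j}^{4}}{(\mu_{j}^{2}+|z|^{2})^{4}}$, after extracting a subsequence with $\mu_{j}^{(n)}\to\mu_{j}$ and $\xi^{(n)}\to\xi^{*}$. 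Since $\|h_{n}\|_{*}\to 0$ forces $h_{n}\to 0$ locally uniformly on $K$, a standard Arzel\`a–Ascoli and elliptic regularity argument produces a limit $\tilde{\psi}_{j}\in L^{\infty}(\mathbb{R}^{4})$ satisfying $\mathcal{L}_{j}(\tilde{\psi}_{j})=0$ in $\mathbb{R}^{4}$. By the nondegeneracy result of Baraket–Pacard cited above, $\tilde{\psi}_{j}=\sum_{i=0}^{4}a_{ij}Y_{ij}$. Passing the orthogonality conditions $\int_{\Omega_{\varepsilon}}\chi_{j}Z_{ij}\psi_{n}=0$ to the limit (for \emph{all} $i=0,\dots,4$, including $i=0$, which is the point of working with \eqref{Lp19} rather than \eqref{Lp5}) yields $\int_{\mathbb{R}^{4}}\chi(|z|)Y_{ij}(z)\tilde{\psi}_{j}(z)\,dz=0$. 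The parity of the $Y_{ij}$ makes $(\int\chi Y_{ij}Y_{kj}\,dz)_{i,k}$ a diagonal matrix with strictly positive diagonal entries as soon as $R_{0}$ is fixed large enough, so $a_{ij}=0$ for every $i$. Hence $\tilde{\psi}_{j,n}\to 0$ in $C^{4,\alpha}_{\mathrm{loc}}$, killing the first block of the $\|\cdot\|_{**}$-norm.

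Step 2 (outer analysis). It remains to show that $\psi_{n}(y)$ and its rescaled derivatives $r_{j}^{|\alpha|}D^{\alpha}\psi_{n}$ go to zero uniformly for $\min_{j}r_{j}\ge 2$. Using the Green function $\mathcal{G}_{\varepsilon}(y,z)$ for $\Delta^{2}$ on $\Omega_{\varepsilon}$ with Navier boundary conditions, write
\[
\psi_{n}(y)=\int_{\Omega_{\varepsilon_{n}}}\mathcal{G}_{\varepsilon_{n}}(y,z)\bigl[W(z)\psi_{n}(z)+h_{n}(z)\bigr]\,dz.
\]
Since $W(z)\le C\sum_{j}(\mu_{j}^{2}+|z-\xi_{j}^{\prime}|^{2})^{-4}$, the quantity $W\psi_{n}$ has the same type of decay as allowed in $\|\cdot\|_{*}$, and a direct estimate, built from the known pointwise bounds on $\mathcal{G}_{\varepsilon}$ and on the weight $\sum_{j}(1+|y-\xi_{j}^{\prime}|^{2})^{-7/2}+\varepsilon^{4}$, produces
\[
|\psi_{n}(y)|\le C\bigl(\|h_{n}\|_{*}+o(1)\,\|\psi_{n}\|_{**}\bigr),
\]
where the $o(1)$ term comes from the fact that, thanks to Step 1, the convolution is effectively supported where either $\psi_{n}$ is already small or the weight decays. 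Interior biharmonic Schauder estimates on balls of radius comparable to $r_{j}(y)$ transfer this $L^{\infty}$ bound to the weighted $D^{\alpha}$-bounds appearing in \eqref{norm2}. Combining with Step 1, we conclude $\|\psi_{n}\|_{**}\to 0$, which contradicts $\|\psi_{n}\|_{**}=1$.

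The main technical obstacle is Step 2: producing sharp enough pointwise/derivative bounds on the biharmonic Green function $\mathcal{G}_{\varepsilon}$ on the expanding domain $\Omega_{\varepsilon}$ (with boundary at distance $\sim\varepsilon^{-1}$) so that the weighted outer norm is absorbed correctly, uniformly in $\varepsilon$ and in $\xi\in\mathcal{O}$. The orthogonality against $\chi_{j}Z_{0j}$, absent from \eqref{Lp5} and included here, is what makes the non-degeneracy argument in Step 1 close, and is precisely the feature distinguishing this preliminary lemma from Proposition \ref{LpTeo1}.
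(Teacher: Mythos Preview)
Your contradiction setup and Step 1 are essentially what the paper does: pass to a subsequence, blow up around each $\xi_j'$, obtain a bounded solution of $\mathcal{L}_j\tilde\psi_j=0$, and use the orthogonality against \emph{all} $\chi_jZ_{ij}$, $i=0,\dots,4$, together with the nondegeneracy result to force $\tilde\psi_j=0$. The paper runs this at the end rather than the beginning, but that reordering is harmless.

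The gap is Step 2. On the expanding domain one has, by scaling, $\mathcal{G}_\varepsilon(y,z)=\tfrac{1}{64\pi^2}G(\varepsilon y,\varepsilon z)=-\tfrac{1}{8\pi^2}\log|y-z|+\tfrac{1}{8\pi^2}|\log\varepsilon|+\tfrac{1}{64\pi^2}H(\varepsilon y,\varepsilon z)$, so $\mathcal{G}_\varepsilon$ carries an unavoidable additive $|\log\varepsilon|$. Since $\int_{\Omega_\varepsilon}W$ is bounded away from zero, the representation formula gives at best
\[
\Bigl|\int_{\Omega_{\varepsilon}}\mathcal{G}_\varepsilon(y,z)W(z)\psi_n(z)\,dz\Bigr|\ \le\ C\,|\log\varepsilon_n|\,\|\psi_n\|_\infty,
\]
not $o(1)\|\psi_n\|_{**}$. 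Your splitting does not help: on $\{r_j<R\}$ Step~1 only yields $|\psi_n|=o_n(1)$ with no quantitative rate, so that piece is bounded by $o_n(1)\cdot|\log\varepsilon_n|$, which need not tend to zero; on $\{r_j>R\}$ one gets $C|\log\varepsilon_n|R^{-4}$, which blows up for each fixed $R$. So the asserted bound $|\psi_n(y)|\le C(\|h_n\|_*+o(1)\|\psi_n\|_{**})$ does not follow from the Green representation on $\Omega_\varepsilon$, and this is exactly the obstacle you flag but do not resolve.

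The paper sidesteps the logarithmic loss by reversing the order and treating the exterior in two pieces. First it pulls back to the \emph{fixed} domain, setting $\tilde\psi_n(x)=\psi_n(x/\varepsilon_n)$; there $\Delta^2\tilde\psi_n=o(1)$ uniformly on $\Omega\setminus\cup_jB(\xi_j^*,\delta')$ and the Green function is $\varepsilon$-independent, so $\tilde\psi_n\to 0$ in $C^{3,\alpha}$ away from the concentration points. This kills the portion $r_j\ge\delta'/\varepsilon$ of the $\|\cdot\|_{**}$ norm. Second, a localized estimate (Lemma~\ref{LpLe3a}), obtained from the equation for the cut-off $\hat\psi_{n,j}$ on $B(\xi_j',\delta_0/\varepsilon)$ with Navier data, shows that the intermediate region $R_0\le r_j\le\delta_0/\varepsilon$ is controlled by $\|\psi_n\|_{L^\infty(r_j<2R_0)}+o(1)$. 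Only then does one invoke the inner analysis: since $\|\psi_n\|_{**}=1$ and both outer pieces are $o(1)$, some $\|\psi_n\|_{L^\infty(r_j<R_0)}$ must stay bounded below, and the blow-up limit yields a nontrivial bounded kernel element contradicting the orthogonality conditions.
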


\begin{proof}
We carry out the proof by a contradiction argument. If the above fact were
false, then, there would exist a sequence $\varepsilon_{n}\to0$, points
$\xi^{n} = (\xi_{1}^{n} , \ldots, \xi_{m}^{n} ) \in\mathcal{O}$, functions
$h_{n}$ with $\norm{h_n}_{*}\to0$ and associated solutions $\psi_{n}$ with
$\norm{\psi_n}_{**}=1$ such that
\begin{align}
\label{Lp20}%
\begin{cases}
& \mathcal{L}_{\varepsilon_{n}}(\psi_{n})= h_{n}, \quad\hbox{ in }
\Omega_{\varepsilon_{n}},\\
& \psi_{n}=\Delta\psi_{n} = 0, \quad\hbox{ on } \partial\Omega_{\varepsilon
_{n}},\\
& \int_{\Omega_{\varepsilon_{n}}}\chi_{j}Z_{ij}\psi_{n} =0, \hbox{
for all } i=0,\dots,4,\; j=1,\dots, m.
\end{cases}
\end{align}
Let us set $\tilde{\psi}_{n}(x)=\psi_{n}(x/\varepsilon_{n})$, $x\in\Omega$. It
is directly checked that for any $\delta^{\prime}>0$ sufficiently small
$\tilde{\psi}_{n}$ solves the problem
\begin{align*}
\label{Lp20a}%
\begin{cases}
& \Delta^{2}\tilde{\psi}_{n}= O(\varepsilon_{n}^{4}) + \varepsilon_{n}%
^{-4}h_{n}=o(1), \quad\hbox{ uniformly in } \Omega\backslash\cup_{k=1}^{m}
B(\xi_{j}^{n},\delta^{\prime}),\\
& \tilde{\psi}_{n}=\Delta\tilde{\psi}_{n} = 0, \quad\hbox{ on } \partial
\Omega,\\
&
\end{cases}
\end{align*}
together with $\norm{\tilde{\psi}_n}_{\infty}\leq1$ and
$\norm{\Delta\tilde{\psi}_n}_{\infty} \leq C_{\delta^{\prime}}$, in the
considered region. Passing to a subsequence, we then get that $\xi^{n}\to
\xi^{*} \in\mathcal{O}$ and $\tilde{\psi}_{n} \to0$ in the $C^{3,\alpha}$
sense over compact subsets of $\Omega\backslash\{ \xi_{1}^{*},\dots,\xi
_{m}^{*}\}$. In particular
\[
\sum_{\abs{\alpha}\leq3}\frac{1}{\varepsilon_{n}^{\abs{\alpha}}}%
\abs{D^{\alpha}\psi_n(y)} \to0,\quad\hbox{ uniformly in }
\abs{y-(\xi_j^n)'}\geq\frac{\delta^{\prime}}{2\varepsilon_{n}},
\]
for any $\delta^{\prime}>0$ and $j\in\{1,\dots,m\}$. We obtain thus that
\begin{equation}
\label{conv1}\sum_{j=1}^{m} \sum_{\abs{\alpha}\leq3}%
\norm{r_j^{\abs{\alpha}} D^{\alpha}\psi_n}_{L^{\infty}(r_{j}\geq\delta
^{\prime}/\varepsilon_{n} )},\to0,
\end{equation}
for any $\delta^{\prime}>0$. In conclusion, the \emph{exterior portion} of
$\norm{\psi_n}_{**}$ goes to zero, see (\ref{norm2}).

\bigskip\noindent Let us consider now a smooth radial cut-off function
$\hat{\eta}$ with $\hat{\eta}(s)=1$ if $s<\frac12$, $\hat{\eta}(s)=0$ if
$s\geq1$, and define
\[
\hat{\psi}_{n,j}(y)=\hat{\eta}_{j}(y)\psi_{n}(y)\equiv\hat{\eta}%
\Big(\frac{\varepsilon_{n}}{\delta_{0}}\abs{y-(\xi_j^n)'} \Big)\psi_{n}(y),
\]
such that
\[
\operatorname{supp} \hat{\psi}_{n,j} \subseteq B\Big((\xi_{j}^{n})^{\prime},
\frac{\delta_{0}}{\varepsilon_{n}} \Big).
\]
We observe that
\[
\mathcal{L}_{\varepsilon_{n}}(\hat{\psi}_{n,j})=\hat{\eta}_{j}h_{n} +
F(\hat{\eta}_{j},\psi_{n}),
\]
where
\[
F(f,g)=g\Delta^{2} f + 2\Delta f \Delta g + 4\nabla(\Delta f)\cdot\nabla g + 4
\nabla f\cdot\nabla(\Delta g)
\]
\begin{equation}
\label{F1}+ 4\sum_{i,j=1}^{4} \frac{\partial^{2} f}{\partial y_{i} \partial
y_{j}} \, \frac{\partial^{2} g}{\partial y_{i} \partial y_{j}}.
\end{equation}
Thus we get
\begin{align}
\label{Inter1}%
\begin{cases}
& \Delta^{2} \hat{\psi}_{n,j} = W_{n}(y)\hat{\psi}_{n,j} + \hat{\eta}_{j}h_{n}
+ F(\hat{\eta}_{j},\psi_{n}) \quad\hbox{ in } B\left(  (\xi_{j}^{n})^{\prime},
\frac{\delta_{0}}{\varepsilon_{n}} \right)  ,\\
& \hat{\psi}_{n,j}=\Delta\hat{\psi}_{n,j}=0 \quad\hbox{ on } \partial B\left(
(\xi_{j}^{n})^{\prime}, \frac{\delta_{0}}{\varepsilon_{n}} \right)  .
\end{cases}
\end{align}
The following intermediate result provides an outer estimate. For notational
simplicity \emph{we omit} the subscript $n$ in the quantities involved.

\begin{lema}
\label{LpLe3a} There exist constants $C,R_{0}>0$ such that for large $n$
\begin{equation}
\label{LpLe3aa}\sum_{\abs{\alpha}\leq3}%
\norm{r_j^{\abs{\alpha}} D^{\alpha}\hat{\psi}_j}_{L^{\infty}(r_{j}\geq R_{0}%
)}\leq C\, \{\norm{\hat{\psi}_j}_{L^{\infty}(r_{j}< 2R_{0})} + o(1) \}.
\end{equation}

\end{lema}

\begin{proof}
We estimate the righ-hand side of (\ref{Inter1}). If $2<r_{j}<\delta
_{0}/\varepsilon$ we get
\[
\Delta^{2}\hat{\psi}_{j}=O\Big(\frac{1}{r_{j}^{8}}\Big)\hat{\psi}_{j}+\frac
{1}{r_{j}^{7}}o(1)+O(\varepsilon^{4})+O\Big(\frac{\varepsilon^{3}}{r_{j}%
}\Big)+O\Big(\frac{\varepsilon^{2}}{r_{j}^{2}}\Big)+O\Big(\frac{\varepsilon
}{r_{j}^{3}}\Big).
\]
From (\ref{Inter1}) and standard elliptic estimates we have
\[
\sum_{\abs{\alpha}\leq3}\abs{D^{\alpha}\hat{\psi}_j}\leq C\,\Big\{\frac
{1}{r_{j}^{8}}\norm{\hat{\psi}_j}_{L^{\infty}(r_{j}>1)}+\frac{1}{r_{j}^{7}%
}o(1)+O\Big(\frac{\varepsilon}{r_{j}^{3}}\Big)\Big\},\quad\hbox{ in }2\leq
r_{j}\leq\frac{\delta_{0}}{\varepsilon}.
\]
Now, if $r_{j}\geq2$
\[
\abs{r_j^{\abs{\alpha}}D^{\alpha}\hat{\psi}_j}\leq C\;\Big\{\frac{1}{r_{j}%
^{5}}\norm{\hat{\psi}_j}_{L^{\infty}(r_{j}>1)}+o(1)\Big\},\quad
\abs{\alpha}\leq3.
\]
Finally
\[
\frac{1}{r_{j}^{5}}\norm{\hat{\psi}_j}_{L^{\infty}(r_{j}>1)}\leq
\norm{\hat{\psi}_j}_{L^{\infty}(1<r_{j}<R_{0})}+\frac{1}{R_{0}^{5}%
}\norm{\hat{\psi}_j}_{L^{\infty}(r_{j}>R_{0})},
\]
thus fixing $R_{0}$ large enough we have
\[
\sum_{\abs{\alpha}\leq3}%
\norm{r_j^{\abs{\alpha}}D^{\alpha}\hat{\psi}_j}_{L^{\infty}(r_{j}\geq R_{0}%
)}\leq C\,\{\norm{\hat{\psi}_j}_{L^{\infty}(1<r_{j}<R_{0})}+o(1)\},\quad
2<r_{j}<\frac{\delta_{0}}{\varepsilon},
\]
and then (\ref{LpLe3aa}).
\end{proof}

\noindent We continue with the proof of Lemma~\ref{LpLe3}.

\noindent Since $\norm{\psi_n}_{**}=1$ and using (\ref{conv1}) and
Lemma~\ref{LpLe3a} we have that there exists an index $j\in\{ 1,\dots,m\}$
such that
\begin{equation}
\label{inf1}\liminf_{n\to\infty} \norm{\psi_n}_{L^{\infty}(r_{j}<R_{0})}%
\geq\alpha>0.
\end{equation}
Let us set $\tilde{\psi}_{n}(z)=\psi_{n}((\xi_{j}^{n})^{\prime}+ z)$. We
notice that $\tilde{\psi}_{n}$ satisfies
\[
\Delta^{2} \tilde{\psi}_{n} - W((\xi_{j}^{n})^{\prime}+ z) \, \tilde{\psi}_{n}
= h_{n}((\xi_{j}^{n})^{\prime}+ z), \quad\hbox{ in
} \Omega_{n} \equiv\Omega_{\varepsilon}-(\xi_{j}^{n})^{\prime}\ .
\]
Since $\psi_{n}$, $\Delta\psi_{n}$ are bounded uniformly, standard elliptic
estimates allow us to assume that $\tilde{\psi}_{n}$ converges uniformly over
compact subsets of $\mathbb{R}^{4}$ to a bounded, non-zero solution
$\tilde{\psi}$ of
\[
\Delta^{2} \psi- \frac{384\mu_{j}^{4}}{(\mu_{j}^{2}+\abs{z}^{2})^{4}}\psi=0.
\]
This implies that $\tilde{\psi}$ is a linear combination of the functions
$Y_{ij},\,i=0,\dots,4$. But orthogonality conditions over $\tilde{\psi}_{n}$
pass to the limit thanks to $\norm{\tilde{\psi}_n}_{\infty}\leq1$ and
dominated convergence. Thus $\tilde{\psi}\equiv0$, a contradiction with
(\ref{inf1}). This conclude the proof.
\end{proof}

\medskip\noindent Now we will deal with problem (\ref{Lp19}) lifting the
orthogonality constraints $\int_{\Omega_{\varepsilon}}\chi_{j}Z_{0j}%
\psi=0,\quad j=1,\dots,m$, namely
\begin{align}
\label{Lp21}%
\begin{cases}
& \mathcal{L}_{\varepsilon}(\psi)= h, \quad\hbox{ in } \Omega_{\varepsilon},\\
& \psi= \Delta\psi= 0, \quad\hbox{ on } \partial\Omega_{\varepsilon},\\
& \int_{\Omega_{\varepsilon}}\chi_{j}Z_{ij}\psi=0, \hbox{ for all }
i=1,\dots,4\; j=1,\dots, m.
\end{cases}
\end{align}
We have the following a priori estimates for this problem.


\begin{lema}
\label{LpLe4} There exist positive constants $\varepsilon_{0}$ and $C$ such
that, if $\psi$ is a solution of (\ref{Lp21}), with $h \in L^{\infty}%
(\Omega_{\varepsilon})$, $\| h \|_{*} <\infty$ and with $\xi\in\mathcal{O}$,
then
\begin{equation}
\label{LpLe4a}\norm{\psi}_{**} \leq C\, \abs{\log \ve}\, \norm{h}_{*}%
\end{equation}
for all $\varepsilon\in(0, \varepsilon_{0})$.
\end{lema}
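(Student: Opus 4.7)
The strategy is to reduce problem (\ref{Lp21}) to the fully orthogonal problem (\ref{Lp19}) by isolating the missing dilation component. I would write $\psi = \tilde\psi + \sum_{j=1}^m \alpha_j \chi_j Z_{0j}$ and choose the scalars $\alpha_j$ so that $\tilde\psi$ satisfies the extra orthogonality $\int_{\Omega_\varepsilon}\chi_j Z_{0j}\tilde\psi\,dy = 0$ for every $j$, which forces
$$\alpha_j \int_{\Omega_\varepsilon}\chi_j^2 Z_{0j}^2\,dy = \int_{\Omega_\varepsilon} \chi_j Z_{0j}\,\psi\,dy,$$
with denominator bounded below by a positive constant independent of $\varepsilon$ (thanks to (\ref{mu2}) and the definition of $\chi_j$). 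The remaining orthogonality constraints $\int \chi_i Z_{ki}\tilde\psi = 0$ for $k = 1,\ldots,4$ are preserved automatically: the diagonal terms $\int \chi_j^2 Z_{kj} Z_{0j}$ vanish because $Y_{0j}$ is radial while $Y_{kj}$ is anti-symmetric, and the off-diagonal ones because $\supp\chi_i$ and $\supp\chi_j$ are disjoint for $\varepsilon$ small.

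Now $\tilde\psi$ solves $\mathcal{L}_\varepsilon\tilde\psi = h - \sum_j \alpha_j \mathcal{L}_\varepsilon(\chi_j Z_{0j})$ under the full set of orthogonality conditions of (\ref{Lp19}), so Lemma \ref{LpLe3} applies to it. Since $\mathcal{L}_j Y_{0j} = 0$ and the cutoff $\chi_j$ acts only on the fixed annulus $R_0 < r_j < R_0 + 1$, a direct computation yields $\norm{\mathcal{L}_\varepsilon(\chi_j Z_{0j})}_{*} = O(1)$ and $\norm{\chi_j Z_{0j}}_{**} = O(1)$. The triangle inequality then gives
$$\norm{\psi}_{**} \le \norm{\tilde\psi}_{**} + \sum_{j=1}^m |\alpha_j|\,\norm{\chi_j Z_{0j}}_{**} \le C\norm{h}_{*} + C\sum_{j=1}^m |\alpha_j|.$$

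The remaining, crucial task is to prove $|\alpha_j| \le C\abs{\log\varepsilon}\,\norm{h}_{*}$ independently of $\norm{\psi}_{**}$. I would obtain this by testing the equation $\mathcal{L}_\varepsilon\psi = h$ against a global extension $\bar Z_{0j}$ of $Y_{0j}(\,\cdot\, - \xi_j')$ to $\Omega_\varepsilon$ satisfying $\bar Z_{0j} = \Delta \bar Z_{0j} = 0$ on $\partial\Omega_\varepsilon$. After integration by parts,
$$\int_{\Omega_\varepsilon} h\,\bar Z_{0j}\,dy = \int_{\Omega_\varepsilon}\psi\,\mathcal{L}_\varepsilon \bar Z_{0j}\,dy,$$
and since $\mathcal{L}_j Y_{0j} = 0$, $\mathcal{L}_\varepsilon \bar Z_{0j}$ collapses near $\xi_j'$ to a small perturbation coming from the difference between the true potential $W$ and the $j$-th bubble potential, together with boundary and cutoff contributions. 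The main obstacle is to show that this pairing produces the sharp bound with the factor $\abs{\log\varepsilon}$, which originates from the logarithmic boundary layer between the concentration scale $1$ and the domain scale $\varepsilon^{-1}$; this reflects that $Y_{0j}$ does not decay at infinity (it tends to $4$), so the extension $\bar Z_{0j}$ carries an $O(1)$ bi-harmonic correction with nontrivial boundary data. An absorption argument exploiting that the coefficient of $\norm{\psi}_{**}$ in the resulting inequality is $o(1)$ as $\varepsilon \to 0$ then yields (\ref{LpLe4a}).
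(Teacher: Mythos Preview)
Your reduction to Lemma~\ref{LpLe3} by splitting off a multiple of a $Z_{0j}$-type function is the right idea, and it is exactly what the paper does. However, the proposal contains a genuine gap at the point where you try to recover $|\alpha_j|\le C\abs{\log\ve}\,\norm{h}_*$.

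The first difficulty is your choice of correction. With the crude cutoff $\chi_j Z_{0j}$ you only get $\norm{\mathcal{L}_\ve(\chi_j Z_{0j})}_*=O(1)$, so Lemma~\ref{LpLe3} yields $\norm{\tilde\psi}_{**}\le C\norm{h}_*+C\sum_j|\alpha_j|$ with no small factor in front of $|\alpha_j|$; this leaves no room for an absorption argument. The second and more serious difficulty is your test function. If $\bar Z_{0j}$ is the biharmonic extension of $Z_{0j}$ with Navier boundary data, then the correction is, to leading order, the constant $-4$ (because $Z_{0j}\to 4$ on $\partial\Omega_\ve$), so that near $\xi_j'$ one has $\mathcal{L}_\ve\bar Z_{0j}=\mathcal{L}_\ve Z_{0j}-W\cdot(-4)+O(\ve^2)=4W+o(1)$. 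Thus $\mathcal{L}_\ve\bar Z_{0j}$ does \emph{not} collapse to something small; it is $O(1)$ in $\norm{\cdot}_*$. Moreover, the identity $\int h\,\bar Z_{0j}=\int\psi\,\mathcal{L}_\ve\bar Z_{0j}$ gives information about $\int W\psi$, not about $\alpha_j=\big(\int\chi_j^2 Z_{0j}^2\big)^{-1}\int\chi_j Z_{0j}\psi$: there is no nondegenerate coefficient isolating $\alpha_j$, so the loop does not close.

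What the paper does instead is build a \emph{single} refined function $\tilde Z_{0j}$ that serves both as correction and as test function. It interpolates between $Z_{0j}$ near the core and the Green-function-based profile $\hat Z_{0j}(y)=Z_{0j}(y)-1+a_{0j}\,G(\ve y,\xi_j)$ with $a_{0j}=(H(\xi_j,\xi_j)-8\log(\ve R))^{-1}\sim(8\abs{\log\ve})^{-1}$, followed by an outer cutoff. Two estimates drive the proof: the smallness $\norm{\mathcal{L}_\ve\tilde Z_{0j}}_*=O(\abs{\log\ve}^{-1})$, and the nondegeneracy $\big|\int\mathcal{L}_\ve(\tilde Z_{0j})\,\tilde Z_{0j}\big|=E\abs{\log\ve}^{-1}(1+o(1))$ with $E>0$, the main contribution coming from the transition annulus $R<r_j<R+1$. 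Writing $\tilde\psi=\psi+\sum_j d_j\tilde Z_{0j}$ and testing the resulting equation against $\tilde Z_{0j}$ then gives $|d_j|\cdot E\abs{\log\ve}^{-1}\le C\norm{h}_*+C\abs{\log\ve}^{-2}\sum_k|d_k|$, hence $|d_j|\le C\abs{\log\ve}\norm{h}_*$, and the estimate follows. The Green-function correction with its $\abs{\log\ve}^{-1}$ coefficient is the missing ingredient in your scheme.
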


\begin{proof}
Let $R>R_{0}+1$ be a large and fixed number. Let us consider $\hat{Z}_{0j}$ be
the following function
\begin{equation}
\label{Lp22a}\hat{Z}_{0j}(y)=Z_{0j}(y)-1 + a_{0j}G(\varepsilon y,\xi_{j}),
\end{equation}
where $a_{0j}=(H(\xi_{j},\xi_{j})-8\log(\varepsilon R))^{-1}$. It is clear
that if $\varepsilon$ is small enough
\begin{align}
\label{Lp22b}\hat{Z}_{0j}(y)  &  = Z_{0j}(y) + a_{0j}\left(  G(\varepsilon y,
\xi_{j}) - H(\xi_{j},\xi_{j}) + 8\log(\varepsilon R)\right) \nonumber\\
&  = Z_{0j}(y) + \frac{1}{\abs{\log \ve}}\left(  O(\varepsilon r_{j}) +
8\log\frac{R}{r_{j}}\right)  .
\end{align}
and $Z_{0j}(y)=O(1)$. Next we consider radial smooth cut-off functions
$\eta_{1}$ and $\eta_{2}$ with the following properties:
\begin{align*}
&  0\leq\eta_{1}\leq1, \quad\eta_{1}\equiv1 \hbox{ in } B(0,R), \quad\eta
_{1}\equiv0 \hbox{ in } \mathbb{R}^{4} \setminus B(0,R+1),\quad\hbox{ and }\\
&  0\leq\eta_{2}\leq1, \quad\eta_{2}\equiv1 \hbox{ in } B\left(
0,\frac{\delta_{0}}{3\varepsilon} \right)  , \quad\eta_{2}\equiv0 \hbox{ in }
R^{4} \setminus B\left(  0,\frac{\delta_{0}}{2\varepsilon}\right)  .
\end{align*}
Then we set
\begin{equation}
\label{Lp23}\eta_{1j}(y)=\eta_{1} (r_{j}),\quad\eta_{2j}(y)=\eta_{2} (r_{j}),
\end{equation}
and define the test function
\[
\label{Lp24}\tilde{Z}_{0j}=\eta_{1j}Z_{0j} + (1-\eta_{1j})\eta_{2j}\hat
{Z}_{0j}.
\]
Note the $\tilde{Z}_{0j}$'s behavior throught $\Omega_{\varepsilon}$
\begin{align}
\label{Lp24_1}\tilde{Z}_{0j}=
\begin{cases}
Z_{0j}, & r_{j}\leq R\\
\eta_{1j}(Z_{0j}-\hat{Z}_{0j}) + \hat{Z}_{0j}, & R < r_{j}\leq R + 1\\
\hat{Z}_{0j}, & R+1 < r_{j} \leq\frac{\delta_{0}}{3\varepsilon}\\
\eta_{2j} \hat{Z}_{0j}, & \frac{\delta_{0}}{3\varepsilon} < r_{j} \leq
\frac{\delta_{0}}{2\varepsilon}\\
0 & otherwise.
\end{cases}
\end{align}
In the subsequent, we will label these four regions as
\[
\Omega_{0}\equiv\left\{  r_{j}\leq R \right\}  ,\quad\Omega_{1}\equiv\left\{
R < r_{j}\leq R+1\right\}  , \quad\Omega_{2}\equiv\left\{  R+1 < r_{j}
\leq\frac{\delta_{0}}{3\varepsilon}\right\}  ,
\]
\[
\hbox{ and }\quad\Omega_{3}\equiv\left\{  \frac{\delta_{0}}{3\varepsilon} <
r_{j}\leq\frac{\delta_{0}}{2\varepsilon}\right\}  .
\]
Let $\psi$ be a solution to problem (\ref{Lp21}). We will modify $\psi$ so
that the extra orthogonality conditions with respect to $Z_{0j}$'s hold. We
set
\begin{equation}
\label{Lp240}\tilde{\psi}=\psi+ \sum_{j=1}^{m} d_{j} \tilde{Z}_{0j}.
\end{equation}
We adjust the constants $d_{j}$ so that
\begin{equation}
\label{Lp24a}\int_{\Omega_{\varepsilon}} \chi_{j}Z_{ij}\tilde{\psi}=0,
\quad\hbox{ for all } i=0,\dots,4; \; j=1,\dots,m.
\end{equation}
Then,
\begin{equation}
\label{Lp25}\mathcal{L}_{\varepsilon}(\tilde{\psi}) = h + \sum_{j=1}^{m}
d_{j}\mathcal{L}_{\varepsilon}(\tilde{Z}_{0j})\, .
\end{equation}
If (\ref{Lp24a}) holds, the previous lemma allows us to conclude
\begin{equation}
\label{Lp26}\norm{\tilde{\psi}}_{**} \leq C \Big\{ \norm{h}_{*} + \sum
_{j=1}^{m} \abs{d_j}\norm{\mathcal{L}_{\ve}(\tilde{Z}_{0j})}_{*} \Big\}.
\end{equation}
Estimate (\ref{LpLe4a}) is a direct consequence of the following claim:

\begin{claim}
\label{Cl1}The constants $d_{j}$ are well defined,
\begin{equation}
\label{Lp27}\abs{d_j}\leq C\abs{\log\ve}\, \norm{h}_{*} \; \hbox{
and } \; \norm{\mathcal{L}_\ve (\tilde{Z}_{0j})}_{*} \leq\frac{C}%
{\abs{\log \ve}}, \; \hbox{ for all } j=1,\dots,m.
\end{equation}

\end{claim}

\medskip\noindent After these facts have been established, using the fact
that
\[
\norm{\tilde{Z}_{0j}}_{**}\leq C,
\]
we obtain (\ref{LpLe4a}), as desired.

\medskip\noindent Let us prove now Claim~\ref{Cl1}. First we find $d_{j}$.
>From definition (\ref{Lp240}), orthogonality conditions (\ref{Lp24a}) and the
fact that $\operatorname{supp} \chi_{j} \eta_{1k} =\emptyset$ and
$\operatorname{supp} \chi_{j} \eta_{2k} =\emptyset$ if $j\neq k$, we can
write
\begin{equation}
\label{App2}d_{j} \int_{\Omega_{\varepsilon}}\chi_{j} Z_{0j}^{2} = -
\int_{\Omega_{\varepsilon}} \chi_{j} Z_{0j}\psi,\quad\forall j=1,\dots,m.
\end{equation}
Thus $d_{j}$ is well defined. Note that the orthogonality conditions in
(\ref{Lp24a}) for $i=1,\dots,4$ are also satisfied for $\tilde{\psi}$ thanks
to the fact that $R>R_{0} +1$.

\medskip\noindent We prove now the second inequality in (\ref{Lp27}). From
(\ref{Lp24_1}), (\ref{Lp22a}) and estimate (\ref{AsmLe2a}) we obtain,
\begin{equation}
\label{App2a}\mathcal{L}_{\varepsilon}(\tilde{Z}_{0j})=
\begin{cases}
O\left(  \frac{\mu_{j}^{4}\varepsilon}{(\mu_{j}^{2} + r_{j}^{2})^{7/2}}
\right)  , & \hbox{
in }\Omega_{0}\\
\eta_{1j}\mathcal{L}_{\varepsilon}(Z_{0j}-\hat{Z}_{0j}) + \mathcal{L}%
_{\varepsilon}(\hat{Z}_{0j}) + F(\eta_{1j},Z_{0j}-\hat{Z}_{0j}), & \hbox{ in
} \Omega_{1}\\
\mathcal{L}_{\varepsilon}(\hat{Z}_{0j}), & \hbox{ in } \Omega_{2}\\
\eta_{2j}\mathcal{L}_{\varepsilon}(\hat{Z}_{0j}) + F(\eta_{2j},\hat{Z}%
_{0j}), & \hbox{ in }\Omega_{3},
\end{cases}
\end{equation}
and where $F$ was defined in (\ref{F1}). We compute now $\mathcal{L}%
_{\varepsilon}(\tilde{Z}_{0j})$ in $\Omega_{i}$, $i=1,2,3$. In $\Omega_{1}$,
thanks to (\ref{Lp22b}) (we consider $R$ here because we will need this
dependence below to prove estimate \eqref{App7})
\begin{equation}
\label{App4}\abs{Z_{0j}-\hat{Z}_{0j}},\; \abs{R\nabla (Z_{0j}-\hat{Z}_{0j})}\;
\hbox{ and }\; \abs{R^2\,\Delta (Z_{0j}-\hat{Z}_{0j})} = O\left(  \frac
{1}{\abs{\log\ve}}\right)  ;
\end{equation}
moreover
\begin{equation}
\label{App4a}\abs{R\,\nabla(\Delta(Z_{0j}-\hat{Z}_{0j}))}\;\hbox{
and }\; \abs{\Delta^2(Z_{0j}-\hat{Z}_{0j})} = O\left(  \frac{1}{R^{2}%
\abs{\log\ve}}\right)  ,
\end{equation}
Thus, using \eqref{F1} and the fact that, in $\Omega_{1} $, $|D^{\alpha}%
\eta_{1j} |\leq C R^{-|\alpha|}$, for any multi-index $|\alpha| \leq4$,
\[
F(\eta_{1j},Z_{0j}-\hat{Z}_{0j})=O\left(  \frac{1}{R^{4}\abs{\log\ve}}\right)
.
\]
On the other hand,
\begin{equation}
\label{3}\mathcal{L}_{\varepsilon}({Z}_{0j}-\hat{Z}_{0j})=O\left(  \frac
{1}{R^{4}\abs{\log\ve}}\right)  ,
\end{equation}
and
\begin{equation}
\label{3a}\mathcal{L}_{\varepsilon}(\hat{Z}_{0j})= O(\varepsilon R) + O\left(
\frac{1}{R^{4}\abs{\log\ve}}\right)  .
\end{equation}
In conclusion, if $y\in\Omega_{1}$,
\begin{equation}
\label{11}\displaystyle{\mathcal{L}_{\varepsilon}(\tilde{Z}_{0j})(y)=O\left(
\frac{1}{R^{4}\abs{\log\ve}}\right)  }.
\end{equation}

\bigskip\noindent In $\Omega_{2}$,
\begin{align*}
\label{111}W (1-a_{0j}G(\varepsilon y,\xi_{j}))  &  = O\left(  \frac{\mu
_{j}^{4} a_{0j}}{(\mu_{j}^{2} + r_{j}^{2})^{4}} \left\{  H(\xi_{j},\xi_{j})
-H(\varepsilon y, \xi_{j}) + 8\log\frac{r_{j}}{R} \right\}  \right) \\
&  = O\left(  \frac{\mu_{j}^{4} a_{0j}}{(\mu_{j}^{2} + r_{j}^{2})^{7/2}}
\frac{\log r_{j}}{(\mu_{j}^{2} + r_{j}^{2})^{1/2}} \right) \\
&  = O\left(  \frac{1}{\abs{\log\ve}}\frac{\mu_{j}^{4}}{(\mu_{j}^{2} +
r_{j}^{2})^{7/2}}\right)  ,
\end{align*}
and
\[
\mathcal{L}_{\varepsilon}(\hat{Z}_{0j})=O\left(  \frac{\mu_{j}^{4}
\varepsilon}{(\mu_{j}^{2} + r_{j}^{2})^{7/2}}\right)  .
\]
Thus, in this region
\begin{equation}
\label{4}\mathcal{L}(\tilde{Z}_{0j})=O\left(  \frac{\mu_{j}^{4}
\abs{\log\ve}^{-1}}{(\mu_{j}^{2} + r_{j}^{2})^{7/2}}\right)
\end{equation}

\bigskip\noindent In $\Omega_{3}$, thanks to (\ref{Lp22a}),
$\displaystyle{\ \abs{\hat{Z}_{0j}}=O\left(  \frac{1}{\abs{\log \ve}}\right)
}$, $\displaystyle{\abs{\nabla\hat{Z}_{0j}}=O\left(  \frac{\varepsilon
}{\abs{\log\ve}}\right)  }$,

$\displaystyle{\ \abs{\Delta \hat{Z}_{0j}}=O\left(  \frac{\varepsilon^{2}%
}{\abs{\log \ve}}\right)  }$,
$\displaystyle{\abs{\nabla(\Delta \hat{Z}_{0j})}=O\left(  \frac{\varepsilon
^{3}}{\abs{\log\ve}}\right)  }$ and
$\displaystyle{\abs{\Delta^2\hat{Z}_{0j}}=O\left(  \frac{\varepsilon^{4}%
}{\abs{\log\ve}}\right)  }$. Thus,
\[
F(\eta_{2j},\hat{Z}_{0j})=O\left(  \frac{\varepsilon^{4}}{\abs{\log\ve}}%
\right)  .
\]
Finally,
\begin{align*}
\mathcal{L}_{\varepsilon}(\hat{Z}_{0j})  &  = \mathcal{L}_{\varepsilon}%
(Z_{0j}) + W a_{0j}\left(  H(\xi_{j},\xi_{j}) - H(\varepsilon y,\xi_{j}) +
8\log\frac{r_{j}}{R} \right) \\
&  = O\left(  \frac{\mu_{j}^{4} \varepsilon}{(\mu_{j}^{2} + r_{j}^{2})^{7/2}}
\right)  + O\left(  \frac{\mu_{j}^{4}}{(\mu_{j}^{2} + r_{j}^{2})^{4}} \right)
\\
&  = O\left(  \frac{\mu_{j}^{4} \varepsilon}{(\mu_{j}^{2} + r_{j}^{2})^{7/2}}
\right)
\end{align*}
and then, combining (\ref{11}), (\ref{4}) and the previous estimate, we can
again write the estimate (\ref{App2a}):
\begin{equation}
\label{App2b}\mathcal{L}_{\varepsilon}(\tilde{Z}_{0j})=
\begin{cases}
O\left(  \frac{\mu_{j}^{4}\varepsilon}{(\mu_{j}^{2} + r_{j}^{2})^{7/2}}
\right)  , & \hbox{
in }\Omega_{0}\\
O\left(  \frac{1}{\abs{\log\ve}} \right)  , & \hbox{ in }\Omega_{1}\\
O\left(  \frac{\mu_{j}^{4}\abs{\log \ve}^{-1}}{(\mu_{j}^{2} + r_{j}^{2}%
)^{7/2}} \right)  , & \hbox{ in }\Omega_{2}\\
O\left(  \frac{\mu_{j}^{4}\varepsilon}{(\mu_{j}^{2} + r_{j}^{2})^{7/2}%
}\right)  , & \hbox{ in } \Omega_{3}.
\end{cases}
\end{equation}
In conclusion,
\begin{equation}
\label{norm1}\norm{\mathcal{L}_{\ve}(\tilde{Z}_{0j})}_{*}=O\left(  \frac
{1}{\abs{\log \ve}}\right)  .
\end{equation}
Finally, we prove the bounds of $d_{j}$. Testing equation (\ref{Lp25}) against
$\tilde{Z}_{0j}$ and using relations (\ref{Lp26}) and the above estimate, we
get
\begin{align*}
\abs{d_j} \left|  \int_{\Omega_{\varepsilon}}\mathcal{L}_{\varepsilon}%
(\tilde{Z}_{0j})\tilde{Z}_{0j} \right|   &  = \left|  \int_{\Omega
_{\varepsilon}}h\tilde{Z}_{0j} + \int_{\Omega_{\varepsilon}}\tilde{\psi
}\mathcal{L}_{\varepsilon}(\tilde{Z}_{0j}) \right| \\
&  \leq C\norm{h}_{*} + C\norm{\tilde{\psi}}_{\infty}%
\norm{\mathcal{L}_{\ve}(\tilde{Z}_{0j})}_{*}\\
&  \leq C\norm{h}_{*} \left\{  1 +
\norm{\mathcal{L}_{\ve}(\tilde{Z}_{0j})}_{*} \right\}  + C\sum_{k=1}^{m}
\abs{d_k}\norm{\mathcal{L}_{\ve}(\tilde{Z}_{0k})}_{*}%
\norm{\mathcal{L}_{\ve}(\tilde{Z}_{0j})}_{*}%
\end{align*}
where we have used that
\[
\displaystyle{\int_{\Omega_{\varepsilon}}\frac{\mu_{j}^{4}}{(\mu_{j}^{2} +
r_{j}^{2})^{7/2}} \leq C} \quad\hbox{for all } j .
\]
But estimate (\ref{norm1}) imply
\begin{align}
\label{App6}\abs{d_j}\left|  \int_{\Omega_{\varepsilon}}\mathcal{L}%
_{\varepsilon}(\tilde{Z}_{0j})\tilde{Z}_{0j} \right|  \leq C\norm{h}_{*} +
C\sum_{k=1}^{m} \frac{\abs{d_k}}{\abs{\log \ve}^{2}}.
\end{align}
It only remains to estimate the integral term of the left side. For this
purpose, we have the following

\begin{claim}
\label{Cl3} If $R$ is sufficiently large,
\begin{equation}
\label{App7}\left|  \int_{\Omega_{\varepsilon}} \mathcal{L}_{\varepsilon
}(\tilde{Z}_{0j})\tilde{Z}_{0j} \right|  = \frac{E}{\abs{\log\ve}}(1+o(1)),
\end{equation}
where $E$ is a positive constant independent of $\varepsilon$ and $R$.
\end{claim}

\medskip\noindent Assume for the moment the validity of this claim. We replace
(\ref{App7}) in (\ref{App6}), we get
\begin{equation}
\label{App9}\abs{d_j}\leq C\abs{\log\ve}\norm{h}_{*} + C\sum_{k=1}^{m}
\frac{\abs{d_k}}{\abs{\log\ve}},
\end{equation}
and then,
\[
\abs{d_j}\leq C\abs{\log\ve}\norm{h}_{*}.
\]
Claim \ref{Cl1} is thus proven. Let us proof Claim \ref{Cl3}. We decompose
\begin{align*}
\int_{\Omega_{\varepsilon}}\mathcal{L}_{\varepsilon}(\tilde{Z}_{0j})\tilde
{Z}_{0j}  &  = O(\varepsilon) + \int_{\Omega_{1}}\mathcal{L}_{\varepsilon
}(\tilde{Z}_{0j})\tilde{Z}_{0j} + \int_{\Omega_{2}}\mathcal{L}_{\varepsilon
}(\tilde{Z}_{0j})\tilde{Z}_{0j} + \int_{\Omega_{3}}\mathcal{L}_{\varepsilon
}(\tilde{Z}_{0j})\tilde{Z}_{0j}\\
&  \equiv O(\varepsilon) + I_{1} + I_{2} + I_{3}.
\end{align*}
First we estimate $I_{2}$. From (\ref{App2b}),
\begin{align*}
I_{2}  &  = O\left(  \frac{1}{\abs{\log\ve}}\int_{\Omega_{2}}\frac{\mu_{j}^{4}
\hat{Z}_{0j}}{(\mu_{j}^{2} + r_{j}^{2})^{7/2}}\right) \\
&  = O\left(  \frac{1}{R^{3}\abs{\log\ve}}\right)  .
\end{align*}
Now we estimate $I_{3}$. From the estimates in $\Omega_{3}$,
$\displaystyle{\ \abs{I_3}=O\left(  \frac{\varepsilon^{4}}{\abs{\log\ve}}%
\right)  }$. On the other hand, since \eqref{11} holds true and $\hat{Z}%
_{0j}=Z_{0j}\left(  1+ O\left(  \frac{1}{R\abs{\log\ve}}\right)  \right)  $,
we conclude
\begin{align*}
\abs{I_1}  &  = \frac{1}{R^{4}\abs{\log\ve}}\int_{R<r_{j}\leq R+1}\tilde
{Z}_{0j}(y)\,dy\\
&  = \frac{1}{R^{4}\abs{\log\ve}}\int_{R<r_{j}\leq R+1} \left\{  O\left(
\frac{1}{R\abs{\log\ve}} \right)  + \hat{Z}_{0j}(y) \right\}  \,dy\\
&  = \frac{1}{R^{5}\abs{\log\ve}^{2}} + \frac{|S^{3}|}{R^{4}\abs{\log\ve}}
\int_{R}^{R+1} r^{3} \left(  \frac{r^{2}-\mu_{j}^{2}}{\mu_{j}^{2}+ r^{2}%
}\right)  (1+o(1))\,dr\\
&  = \frac{E}{\abs{\log\ve}}(1+o(1)),
\end{align*}
where $E$ is a positive constant independent of $\varepsilon$ and $R$. Thus,
for fixed $R$ large and $\varepsilon$ small, we obtain (\ref{App7}).
\end{proof}

\medskip\noindent Now we can try with the original linear problem (\ref{Lp5}).

\medskip\noindent\textbf{Proof of Proposition~\ref{LpTeo1}}. We first
establish the validity of the a priori estimate (\ref{LpTeo1a}) for solutions
$\psi$ of problem (\ref{Lp5}), with $h\in L^{\infty}(\Omega_{\varepsilon})$
and $\| h \|_{*} <\infty$. Lemma \ref{LpLe4} implies
\begin{equation}
\label{Lp40}\norm{\psi}_{**} \leq C\abs{\log \ve} \Big\{ \norm{h}_{*} +
\sum_{i=1}^{2} \sum_{j=1}^{m} \abs{c_{ij}}\norm{\chi_jZ_{ij}}_{*}\Big\}.
\end{equation}
On the other hand,
\[
\norm{\chi_jZ_{ij}}_{*}\leq C,
\]
then, it is sufficient to estimate the values of the constants $c_{ij}$. To
this end, we multiply the first equation in (\ref{Lp5}) by $Z_{ij}\eta_{2j}$,
with $\eta_{2j}$ the cut-off function introduced in (\ref{Lp23}), and
integrate by parts to find
\begin{equation}
\label{Lp41}\int_{\Omega_{\varepsilon}}\psi\mathcal{L}_{\varepsilon}%
(Z_{ij}\eta_{2j})=\int_{\Omega_{\varepsilon}}hZ_{ij}\eta_{2j} + c_{ij}%
\int_{\Omega_{\varepsilon}}\eta_{2j}Z_{ij}^{2},
\end{equation}
It is easy to see that $\displaystyle{\int_{\Omega_{\varepsilon}}\eta
_{2j}Z_{ij}h=O(\norm{h}_{*})}$ and $\displaystyle{\int_{\Omega_{\varepsilon}%
}\eta_{2j}Z_{ij}^{2}=C>0}$. On the other hand we have
\begin{align*}
\label{Lp42}\mathcal{L}_{\varepsilon}(\eta_{2j}Z_{ij})  &  = \eta
_{2j}\mathcal{L}_{\varepsilon}(Z_{ij}) + F(\eta_{2j},Z_{ij})\\
&  = O\left(  \frac{\mu_{j}^{4}\varepsilon}{(\mu_{j}^{2} + r_{j}^{2})^{7/2}%
}\right)  \eta_{2j}\abs{Z_{ij}} + F(\eta_{2j},Z_{ij}).
\end{align*}
Directly from \eqref{F1} we get
\begin{align*}
F(\eta_{2j},Z_{ij})  &  = O\left(  \frac{\varepsilon^{4}}{(\mu_{j}^{2} +
r_{j}^{2})^{1/2}}\right)  + O\left(  \frac{\varepsilon^{3}}{\mu_{j}^{2} +
r_{j}^{2}}\right) \\
&  + O\left(  \frac{\varepsilon^{2}}{(\mu_{j}^{2} + r_{j}^{2})^{3/2}}\right)
+ O\left(  \frac{\varepsilon}{(\mu_{j}^{2} + r_{j}^{2})^{2}}\right)  ,
\end{align*}
in the region $\frac{\delta_{0}}{3\varepsilon} \leq r_{j}\leq\frac{\delta_{0}%
}{2\varepsilon}$. Thus
\begin{equation}
\label{N1}\norm{\mathcal{L}_{\ve}(\eta_{2j}Z_{ij})}_{*}= O(\varepsilon)
\quad\hbox{ and }
\end{equation}
\[
\left|  \int_{\Omega_{\varepsilon}} \psi\mathcal{L}_{\varepsilon}(\eta
_{2j}Z_{ij})\right|  \leq C\varepsilon\abs{\log\ve}\norm{\psi}_{\infty} \leq
C\varepsilon\abs{\log\ve}\norm{\psi}_{**}.
\]
Using the above estimates in (\ref{Lp41}), we obtain
\begin{equation}
\label{Lp43}\abs{c_{ij}}\leq C\{\varepsilon\abs{\log\ve}\norm{\psi}_{**} +
\norm{h}_{*} \},
\end{equation}
and then
\[
\displaystyle{\abs{c_{ij}}\leq C \Big\{ (1 + \varepsilon\abs{\log\ve}^{2})
\norm{h}_{*} + \varepsilon\abs{\log\ve}^{2}\sum_{l,k}\abs{c_{lk}} \Big\}}.
\]
Then $\abs{c_{ij}}\leq C \norm{h}_{*}$ and putting this estimate in
(\ref{Lp40}), we conclude the validity of (\ref{LpLe4a}).

\medskip\noindent We now prove the solvability assertion. To this purpose we
consider the space
\begin{align*}
\mathcal{H}  &  = \Big\{ \psi\in H^{3}(\Omega_{\varepsilon})\;:\; \psi
=\Delta\psi=0 \hbox{ on } \partial\Omega_{\varepsilon},
\hbox{ and such that }\\
&  \quad\quad\int_{\Omega_{\varepsilon}} \chi_{j}Z_{ij}\psi=0,\quad
\hbox{ for all } i=1,\dots,4;\, j=1,\dots,m \Big\},
\end{align*}
endowed with the usual inner product $(\psi,\varphi)=\int_{\Omega
_{\varepsilon}}\Delta\psi\Delta\varphi$. Problem (\ref{Lp21}) expressed in a
weak form is equivalent to that of finding a $\psi\in\mathcal{H}$, such that
\[
(\psi,\varphi) =\int_{\Omega_{s}} \big\{h + W\psi\big\}\varphi, \quad
\hbox{ for all } \varphi\in\mathcal{H}.
\]
With the aid of Riesz's representation Theorem, this equation can be rewritten
in $\mathcal{H}$ in the operator form $\psi=K(W\psi+ h)$, where $K $ is a
compact operator in $\mathcal{H}$. Fredholm's alternative guarantees unique
solvability of this problem for any $h$ provided that the homogeneous equation
$\psi=K(W\psi)$ has only the zero solution in $\mathcal{H}$. This last
equation is equivalent to (\ref{Lp21}) with $h\equiv0$. Thus existence of a
unique solution follows from the a priori estimate (\ref{LpLe4a}). This
concludes the proof. \qed

\bigskip\noindent The result of Proposition~\ref{LpTeo1} implies that the
unique solution $\psi=T(h)$ of (\ref{Lp5}) defines a continuous linear map
from the Banach space $\mathcal{C}_{*}$ of all functions $h\in L^{\infty
}(\Omega_{\varepsilon})$ with $\norm{h}_{*} < +\infty$, into $W^{3,\infty
}(\Omega_{\varepsilon})$, with norm bounded uniformly in $\varepsilon$.

\begin{remark}
\label{Re1} The operator $T$ is differentiable with respect to the variables
$\xi^{\prime}$. In fact, computations similar to those used in \cite{KMP}
yield the estimate
\begin{equation}
\label{Re1a}\norm{\partial_{\xi'}T(h)}_{**}\leq C\abs{\log\ve}^{2}
\norm{h}_{*},\quad\hbox{ for all }l=1,2;\;k=1,\dots,m.
\end{equation}

\end{remark}

\bigskip

\section{The intermediate nonlinear problem}

\noindent In order to solve Problem (\ref{Asm7}) we consider first the
intermediate nonlinear problem.
\begin{align}
\label{NLP1}%
\begin{cases}
& \mathcal{L}_{\varepsilon}(\psi)= -R + N(\psi) + \sum_{i=1}^{4} \sum
_{j=1}^{m} c_{ij}\chi_{j}Z_{ij}, \quad\hbox{ in } \Omega_{\varepsilon},\\
& \psi=\Delta\psi=0, \quad\hbox{ on } \partial\Omega_{\varepsilon},\\
& \int_{\Omega_{\varepsilon}}\chi_{j}Z_{ij}\psi=0, \hbox{ for all }
i=1,\dots,4\; j=1,\dots, m.
\end{cases}
\end{align}
For this problem we will prove

\begin{prop}
\label{NLPLe1} Let $\xi\in\mathcal{O}$. Then, there exists $\varepsilon_{0}>0$
and $C>0$ such that for all $\varepsilon\leq\varepsilon_{0}$ the nonlinear
problem (\ref{NLP1}) has a unique solution $\psi\in$ which satisfies
\begin{equation}
\label{NLP2}\norm{\psi}_{**} \leq C\,\varepsilon\abs{\log\ve}.
\end{equation}
Moreover, if we consider the map $\xi^{\prime}\in\mathcal{O}\to\psi
\in\mathcal{C}^{4,\alpha}(\overline{\Omega}_{\varepsilon})$, the derivative
$D_{\xi^{\prime}}\psi$ exists and defines a continuous map of $\xi^{\prime}$.
Besides
\begin{equation}
\label{NLP3}\norm{D_{\xi'}\psi}_{**} \leq C\,\varepsilon\abs{\log\ve}^{2}.
\end{equation}

\end{prop}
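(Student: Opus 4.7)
The plan is to rewrite Problem (\ref{NLP1}) as a fixed-point equation using the linear operator $T$ from Proposition \ref{LpTeo1}, and apply the contraction mapping principle in an appropriate ball of the norm $\|\cdot\|_{**}$. Since solving (\ref{NLP1}) is equivalent to finding a fixed point $\psi \in \mathcal{H}$ of
\[
\mathcal{A}(\psi) := T(-R + N(\psi)),
\]
where $T$ is the linear operator constructed in Proposition \ref{LpTeo1}, we have the basic estimate
\[
\|\mathcal{A}(\psi)\|_{**} \le C\,|\log\varepsilon|\,\bigl(\|R\|_{*} + \|N(\psi)\|_{*}\bigr).
\]

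First, I would fix a radius $\delta_\varepsilon := C_0\,\varepsilon\,|\log\varepsilon|$ and consider the closed ball
\[
\mathcal{F}_{\varepsilon} := \bigl\{\psi \in \mathcal{H} : \|\psi\|_{**} \le \delta_\varepsilon\bigr\}.
\]
By Lemma \ref{AsmLe1} one has $\|R\|_{*} \le C\varepsilon$, and by Lemma \ref{AsmLe2} one has $\|N(\psi)\|_{*} \le C\|\psi\|_{\infty}^{2} \le C\|\psi\|_{**}^{2}$. Hence for $\psi \in \mathcal{F}_{\varepsilon}$,
\[
\|\mathcal{A}(\psi)\|_{**} \le C|\log\varepsilon|\bigl(\varepsilon + \delta_\varepsilon^2\bigr) \le C|\log\varepsilon|\bigl(\varepsilon + C_0^{2}\,\varepsilon^{2}|\log\varepsilon|^{2}\bigr) \le \delta_\varepsilon,
\]
provided $C_0$ is chosen large enough and $\varepsilon$ is small. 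For the contraction property, since $N(\psi) = W[e^{\psi}-\psi-1]$, for $\psi_1, \psi_2 \in \mathcal{F}_{\varepsilon}$ a mean-value argument yields $\|N(\psi_1)-N(\psi_2)\|_{*} \le C(\|\psi_1\|_{\infty} + \|\psi_2\|_{\infty})\|\psi_1-\psi_2\|_{\infty}$, and therefore
\[
\|\mathcal{A}(\psi_1)-\mathcal{A}(\psi_2)\|_{**} \le C|\log\varepsilon|\,\delta_\varepsilon\,\|\psi_1-\psi_2\|_{**} \le C\,\varepsilon\,|\log\varepsilon|^{2}\,\|\psi_1-\psi_2\|_{**},
\]
which is a strict contraction for $\varepsilon$ small. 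Banach's fixed-point theorem yields a unique $\psi \in \mathcal{F}_{\varepsilon}$ solving (\ref{NLP1}), together with the bound (\ref{NLP2}).

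For the differentiability in $\xi'$, I would apply the implicit function theorem to the $C^{1}$ map
\[
G(\psi,\xi') = \psi - T\bigl(-R(\xi') + N(\psi)\bigr),
\]
whose partial derivative $D_{\psi}G(\psi,\xi')[\varphi] = \varphi - T\bigl(W[e^{\psi}-1]\varphi\bigr)$ is invertible at the solution (for small $\psi$ the second term is a contraction on $\mathcal{H}$). Formal differentiation of the fixed-point identity gives
\[
\bigl(I - T\,N'(\psi)\bigr)\partial_{\xi'}\psi = (\partial_{\xi'}T)\bigl(-R + N(\psi)\bigr) - T(\partial_{\xi'}R) + T\bigl(\partial_{\xi'}W \cdot [e^{\psi}-\psi-1]\bigr).
\]
Using the estimate (\ref{Re1a}) which yields $\|\partial_{\xi'}T(h)\|_{**} \le C|\log\varepsilon|^{2}\|h\|_{*}$, together with the direct computation $\|\partial_{\xi'}R\|_{*} = O(\varepsilon)$ obtained by differentiating the expansion performed in Lemma \ref{AsmLe1} and tracking the $\xi'$-dependence in the proof of Lemma \ref{AsmLe1}, one gets
\[
\|\partial_{\xi'}\psi\|_{**} \le C|\log\varepsilon|^{2}\,\varepsilon + C|\log\varepsilon|\,\varepsilon + (\text{contractive term})\|\partial_{\xi'}\psi\|_{**},
\]
so that (\ref{NLP3}) follows by absorbing the last term on the left.

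The main obstacle I expect is a careful bookkeeping of the $\xi'$-derivatives, specifically establishing $\|\partial_{\xi'}R\|_{*}=O(\varepsilon)$ and controlling $\partial_{\xi'}T$ in the $\|\cdot\|_{**}$ topology with the correct $|\log\varepsilon|^{2}$ loss; the expansions of $H_j$ and $U_j$ in Lemma \ref{AaLe1} must be differentiated with uniformity over $\xi\in\mathcal{O}$, and the orthogonality conditions in (\ref{NLP1}) must also be differentiated (producing additional small correction terms in the Lagrange multipliers $c_{ij}$, which are handled exactly as in the proof of Proposition \ref{LpTeo1}). Once these estimates are in hand, the fixed-point arguments and the implicit function theorem produce the claimed conclusions.
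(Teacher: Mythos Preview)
Your proposal is correct and follows essentially the same route as the paper: rewrite (\ref{NLP1}) as the fixed-point problem $\psi=T(N(\psi)-R)$, use Lemma~\ref{AsmLe1}, Lemma~\ref{AsmLe2}, and Proposition~\ref{LpTeo1} to show that the map is a contraction on the ball $\{\|\psi\|_{**}\le C\varepsilon|\log\varepsilon|\}$, and then differentiate in $\xi'$ using Remark~\ref{Re1}. In fact you give more detail on the differentiability step than the paper, which simply refers to \cite{KMP} for (\ref{NLP3}).
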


\begin{proof}
In terms of the operator $T$ defined in Proposition \ref{LpTeo1}, Problem
(\ref{NLP1}) becomes
\[
\psi= \mathcal{B}(\psi)\equiv T(N(\psi) - R).
\]
Let us consider the region
\[
\mathcal{F}\equiv\{ \psi\in\mathcal{C}^{4,\alpha}(\overline{\Omega
}_{\varepsilon}) \, : \, \norm{\psi}_{**} \leq\varepsilon\abs{\log\ve} \}.
\]
>From Proposition~\ref{LpTeo1},
\[
\norm{\mathcal{B}(\psi)}_{**}\leq C\,\abs{\log\ve}\,\big\{
\norm{N(\psi)}_{*} + \norm{R}_{*} \big\},
\]
and Lemma~\ref{AsmLe1} implies
\[
\norm{R}_{*}\leq C\varepsilon.
\]
Also, from Lemma~\ref{AsmLe2}
\[
\norm{N(\psi)}_{*} \leq C\norm{\psi}^{2}_{\infty}\leq C\norm{\psi}_{**}^{2}.
\]
Hence, if $\psi\in\mathcal{F}$, $\norm{\mathcal{B}(\psi)}_{**}\leq
C\varepsilon\abs{\log\ve}$. Along the same way we obtain
$\norm{N(\psi_1)-N(\psi_2)}_{*}\leq C \max_{i=1,2}\norm{\psi_i}_{\infty
}\norm{\psi_1-\psi_2}_{\infty}\leq C\max_{i=1,2}\norm{\psi_i}_{**}%
\norm{\psi_1-\psi_2}_{**}$, for any $\psi_{1},\psi_{2}\in\mathcal{F}$. Then,
we conclude
\[
\norm{\mathcal{B}(\psi_1)-\mathcal{B}(\psi_2)}_{**}\leq
C\abs{\log\ve}\,\norm{N(\psi_1)-N(\psi_2)}_{*}\leq C\varepsilon
\abs{\log\ve}^{2}\norm{\psi_1-\psi_2}_{**}%
\]
It follows that for all $\varepsilon$ small enough $\mathcal{B}$ is a
contraction mapping of $\mathcal{F}$, and therefore a unique fixed point of
$\mathcal{B}$ exists in this region. The proof of (\ref{NLP3}) is similar to
one included in \cite{KMP} and we thus omit it.
\end{proof}

\medskip


\section{Variational reduction}

\noindent We have solved the nonlinear problem (\ref{NLP1}). In order to find
a solution to the original problem (\ref{Asm7}) we need to find $\mathbf{\xi}$
such that
\begin{equation}
\label{VR1}c_{ij}=c_{ij}(\mathbf{\xi^{\prime}})=0,\quad\hbox{for all
} i,j.
\end{equation}
where $c_{ij}(\mathbf{\xi^{\prime}})$ are the constants in (\ref{NLP1}).
Problem (\ref{VR1}) is indeed variational: it is equivalent to finding
critical points of a function of $\mathbf{\xi^{\prime}}$. In fact, we define
the function for $\mathbf{\xi}\in\mathcal{O}$
\begin{equation}
\label{VR2}\mathcal{F}_{\varepsilon}(\mathbf{\xi}) \equiv J_{\rho
}[U(\mathbf{\xi})+\hat{\psi}_{\mathbf{\xi}}]
\end{equation}
where $J_{\rho}$ is defined in \eqref{In4}, $\rho$ is given by \eqref{rhoeps},
$U=U(\mathbf{\xi})$ is our approximate solution from (\ref{Aa5a}) and
$\hat{\psi}_{\mathbf{\xi}}=\psi\big(\frac{x}{\varepsilon},\frac{\mathbf{\xi}%
}{\varepsilon}\big)$, $x\in\Omega$, with $\psi=\psi_{\xi^{\prime}}$ the unique
solution to problem (\ref{NLP1}) given by Proposition \ref{NLPLe1}. Then we
obtain that critical points of $\mathcal{F}$ correspond to solutions of
(\ref{VR1}) for small $\varepsilon$. That is,

\begin{lema}
\label{VRLe1} $\mathcal{F}_{\varepsilon}:\mathcal{O}\to\mathbb{R}$ is of class
$\mathcal{C}^{1}$. Moreover, for all $\varepsilon$ small enough, if
$D_{\mathbf{\xi}}\mathcal{F}_{\varepsilon}(\mathbf{\xi})=0$ then $\mathbf{\xi
}$ satisfies (\ref{VR1}).
\end{lema}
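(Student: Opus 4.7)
The plan is to differentiate $\mathcal{F}_\varepsilon$ directly, use integration by parts against the equation that $U+\hat\psi_\xi$ satisfies to rewrite the derivative as a linear combination of the $c_{ij}$'s, and then show that the resulting linear system has a near-diagonal, hence invertible, coefficient matrix.

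For the $C^1$-regularity I would invoke Proposition~\ref{NLPLe1}: the derivative $D_{\xi'}\psi$ exists and depends continuously on the parameter, so $\xi\mapsto\hat\psi_\xi$ is $C^1$. Combined with the smooth dependence of $U(\xi)$ on $\xi$ and the $C^1$-character of $J_\rho$ on $H^2\cap H_0^1$, this gives $\mathcal{F}_\varepsilon\in C^1(\mathcal{O})$.

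For the main claim, fix $l\in\{1,\dots,m\}$ and $i\in\{1,\dots,4\}$ and put $\varphi_{il}:=\partial_{\xi_l^i}(U+\hat\psi_\xi)$. Since $U+\hat\psi$ vanishes together with its Laplacian on $\partial\Omega$ for every $\xi\in\mathcal{O}$, the same holds for $\varphi_{il}$, so two integrations by parts give
$$\partial_{\xi_l^i}\mathcal{F}_\varepsilon(\xi)=\int_\Omega\bigl[\Delta^2(U+\hat\psi)-\rho^4 k(x)e^{U+\hat\psi}\bigr]\varphi_{il}\,dx.$$
Pulling \eqref{NLP1} back to the variable $x=\varepsilon y$, the bracket equals $\varepsilon^{-4}\sum_{i',j'}c_{i'j'}\chi_{j'}(x/\varepsilon)Z_{i'j'}(x/\varepsilon)$, and the change of variables $x=\varepsilon y$ then yields
$$\partial_{\xi_l^i}\mathcal{F}_\varepsilon(\xi)=\sum_{i',j'}c_{i'j'}\int_{\Omega_\varepsilon}\chi_{j'}(y)Z_{i'j'}(y)\,\varphi_{il}(\varepsilon y)\,dy.$$

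The remaining work is to expand $\varphi_{il}(\varepsilon y)$ and read off a diagonally dominant matrix. Differentiating \eqref{Aa3}-\eqref{Aa5a} directly gives $\partial_{\xi_l^i}u_l(\varepsilon y)=\varepsilon^{-1}Z_{il}(y)+O(1)$; the remaining pieces $\partial_{\xi_l^i}H_l$, $\partial_{\xi_l^i}U_k$ for $k\neq l$, and the contributions from $\partial_{\xi_l^i}\mu_j$ stay of order $O(1)$ uniformly by Lemma~\ref{AaLe1}, while the $\hat\psi$-derivative is negligible thanks to \eqref{NLP3}. Setting
$$M_{(i,l),(i',j')}:=\varepsilon\int_{\Omega_\varepsilon}\chi_{j'}Z_{i'j'}\,\varphi_{il}(\varepsilon y)\,dy,$$
the leading contribution is $\delta_{lj'}\delta_{ii'}\int_{\R^4}\chi\,|Y_{i'l}|^{2}+o(1)$: the factor $\delta_{lj'}$ reflects the fact that for $j'\neq l$ the function $Z_{il}$ is small on the support of $\chi_{j'}$, since $|\xi_l'-\xi_{j'}'|\to\infty$; and for $j'=l$ the factor $\delta_{ii'}$ follows from the parity of $Y_{il}$ in its spatial coordinates. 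Hence $M$ is a small perturbation of a nonzero multiple of the identity, hence invertible for $\varepsilon$ small, and $\partial_\xi\mathcal{F}_\varepsilon(\xi)=0$ forces $M\mathbf{c}=0$ with $\mathbf{c}=(c_{ij})\in\R^{4m}$, giving \eqref{VR1}.

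The principal obstacle is making the expansion $\varphi_{il}(\varepsilon y)=\varepsilon^{-1}Z_{il}+\text{l.o.t.}$ sufficiently quantitative, uniformly in $y\in\Omega_\varepsilon$, to legitimize the matrix expansion; this boils down to reusing \eqref{NLP3} together with the asymptotics from Lemma~\ref{AaLe1} applied to each $\partial_{\xi_l^i}H_j$. Once this bookkeeping is settled, the rest of the argument is linear algebra.
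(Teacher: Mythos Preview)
Your proposal is correct and follows essentially the same route as the paper: both compute $\partial_{\xi}\mathcal{F}_\varepsilon$ by testing the equation \eqref{NLP1} against $\partial_{\xi}(V+\psi)$ (you do it in $\Omega$ and then rescale, the paper works directly in $\Omega_\varepsilon$ via $I_\varepsilon$), expand this derivative as $Z_{il}+o(1)$ in the scaled variable, and conclude from diagonal dominance of the resulting $4m\times 4m$ system. One small remark: the $\hat\psi$-derivative you call ``negligible'' is actually $O(|\log\varepsilon|^2)$ rather than $O(1)$ by \eqref{NLP3}, but since only $o(\varepsilon^{-1})$ is needed for the matrix $M$ to be a perturbation of the identity, your argument goes through unchanged.
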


\begin{proof}
We define
\[
I_{\varepsilon}[v]\equiv\frac12 \int_{\Omega_{\varepsilon}}(\Delta v)^{2} -
\int_{\Omega_{\varepsilon}}k(\varepsilon y)e^{v}.
\]
Let us differentiate the function $\mathcal{F}_{\varepsilon}$ with respect to
$\xi$. Since $J_{\rho}[U(\xi) +\hat{\psi}_{\xi}]=I_{\varepsilon}[V(\xi
^{\prime})+\psi_{\xi^{\prime}}]$, we can differentiate directly under the
integral sign, so that
\begin{align*}
\label{Sola1}\partial_{(\xi_{k})_{l}}\mathcal{F}_{\varepsilon}(\xi)  &  =
\varepsilon^{-1}DI_{\varepsilon} [V+\psi]\left(  \partial_{(\xi_{k}^{\prime
})_{l}}V + \partial_{(\xi_{k}^{\prime})_{l}}\psi\right) \\
&  = \varepsilon^{-1}\sum_{i=1}^{4}\sum_{j=1}^{m} \int_{\Omega_{\varepsilon}%
}c_{ij}\chi_{j}Z_{ij}\left(  \partial_{(\xi_{k}^{\prime})_{l}}V +
\partial_{(\xi_{k}^{\prime})_{l}}\psi\right)  .
\end{align*}
From the results of the previous section, this expression defines a continuous
function of $\xi^{\prime}$, and hence of $\xi$. Let us assume that $D_{\xi
}\mathcal{F}_{\varepsilon}(\xi)=0$. Then
\[
\sum_{i=1}^{4}\sum_{j=1}^{m} \int_{\Omega_{\varepsilon}}c_{ij}\chi_{j}%
Z_{ij}\left(  \partial_{(\xi_{k}^{\prime})_{l}}V + \partial_{(\xi_{k}^{\prime
})_{l}}\psi\right)  =0,\quad\hbox{ for } k=1,2,3,4;\quad l=1,\dots,m.
\]
Since $\norm{D_{\xi'}\psi_{\xi'}}\leq C\varepsilon\abs{\log\ve}^{2}$, we have
\[
\partial_{(\xi_{k}^{\prime})_{l}}V + \partial_{(\xi_{k}^{\prime})_{l}}\psi=
Z_{kl} + o(1),
\]
where $o(1)$ is uniformly small as $\varepsilon\to0$. Thus, we have the
following linear system of equation
\[
\sum_{i=1}^{4}\sum_{j=1}^{m} c_{ij}\int_{\Omega_{\varepsilon}} \chi_{j}Z_{ij}(
Z_{kl} + o(1))=0,\quad\hbox{ for } k=1,2,3,4;\quad l=1,\dots,m.
\]
This system is dominant diagonal, thus $c_{ij}=0$ for all $i,j$. This
concludes the proof.
\end{proof}

\noindent We also have the validity of the following Lemma

\begin{lema}
\label{VRLe2} Let $\rho$ be given by \eqref{rhoeps}. For points $\mathcal{\xi
}\in\mathcal{O}$ the following expansion holds
\begin{equation}
\label{VRLe21}\mathcal{F}_{\varepsilon}(\mathbf{\xi})= J_{\rho}[U(\mathbf{\xi
})] +\theta_{\varepsilon}(\mathbf{\xi}),
\end{equation}
where $\abs{\theta_{\ve}} + \abs{\nabla \theta_{\ve}}=o(1)$, uniformly on
$\xi\in\mathcal{O}$ as $\varepsilon\to0$.
\end{lema}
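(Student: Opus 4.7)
The plan is to work in the rescaled variables $y=x/\ve$ introduced in \eqref{Aa11}. The change of variables carried out there identifies $\mathcal{F}_{\ve}(\xi)=I_{\ve}[V+\psi]$ and $J_{\rho}[U(\xi)]=I_{\ve}[V]$, so that
\[
\theta_{\ve}(\xi)\;=\;I_{\ve}[V+\psi_{\xi'}]-I_{\ve}[V].
\]
First I would Taylor-expand $I_{\ve}$ around $V$:
\[
\theta_{\ve}\;=\;DI_{\ve}[V](\psi)+\tfrac12 D^{2}I_{\ve}[V](\psi,\psi)+\rho_{3}(\psi),
\]
with cubic remainder bounded by $|\rho_{3}(\psi)|\le C\norm{\psi}_{\infty}^{3}\int_{\Omega_{\ve}}W$. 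Green's identity and the Navier boundary conditions on both $V$ and $\psi$ give $DI_{\ve}[V](\psi)=\int R\,\psi$ and $D^{2}I_{\ve}[V](\psi,\psi)=\int\psi\,\mathcal{L}_{\ve}(\psi)$. Substituting equation \eqref{NLP1} for $\mathcal{L}_{\ve}(\psi)$ and cancelling the Lagrange-multiplier contribution through the orthogonality conditions $\int\chi_{j}Z_{ij}\psi=0$ ($i=1,\dots,4$) yields the clean identity
\[
\theta_{\ve}\;=\;\tfrac12\!\int R\,\psi\;+\;\tfrac12\!\int N(\psi)\,\psi\;+\;\rho_{3}(\psi).
\]

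The pointwise claim $|\theta_{\ve}|=o(1)$ then follows by combining this identity with Lemma \ref{AsmLe1}, Lemma \ref{AsmLe2} and Proposition \ref{NLPLe1}. Since the weight $\sum_{j}(1+|y-\xi'_{j}|^{2})^{-7/2}$ entering $\norm{\cdot}_{*}$ is integrable over $\mathbb{R}^{4}$, the bound $\norm{R}_{*}\le C\ve$ upgrades to $\norm{R}_{L^{1}(\Omega_{\ve})}\le C\ve$, so $|\!\int R\,\psi|\le O(\ve)\cdot O(\ve\abs{\log\ve})$. A similar pointwise argument bounds $|\!\int N(\psi)\,\psi|+|\rho_{3}(\psi)|$ by $C\norm{\psi}_{\infty}^{3}=O(\ve^{3}\abs{\log\ve}^{3})$, and the sum is $O(\ve^{2}\abs{\log\ve})=o(1)$ uniformly on $\mathcal{O}$.

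For the derivative I would use $\partial_{\xi_{k}}=\ve^{-1}\partial_{\xi'_{k}}$ and differentiate the identity above term by term, relying on (i) the bound $\norm{\partial_{\xi'}\psi}_{**}\le C\ve\abs{\log\ve}^{2}$ from Proposition \ref{NLPLe1}, and (ii) a companion estimate $\norm{\partial_{\xi'}R}_{*}=O(\ve)$ obtained by inspecting the proof of Lemma \ref{AsmLe1}. A typical term then reads
\[
\partial_{\xi_{k}}\!\!\int R\,\psi\;=\;\ve^{-1}\!\left[\int(\partial_{\xi'_{k}}R)\,\psi+\int R\,\partial_{\xi'_{k}}\psi\right]\;=\;O(\ve\abs{\log\ve}^{2}),
\]
and the quadratic and cubic terms are estimated analogously, yielding $|\nabla\theta_{\ve}|=o(1)$ uniformly on $\mathcal{O}$.

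The main obstacle is controlling the $\xi'$-derivatives of $R$, $W$ and $\psi$ simultaneously and uniformly on $\mathcal{O}$; however, (i) is already in hand from Proposition \ref{NLPLe1}, and (ii) reduces to differentiating the expansions \eqref{Aa6}--\eqref{Aa8} with respect to the peak locations, where the $\xi$-dependence enters only through the smooth functions $H(\cdot,\xi_{j})$, $k(\xi_{j})$ and the parameters $\mu_{j}(\xi)$ bounded away from zero by \eqref{mu2} on $\mathcal{O}$, so each derivative costs only a harmless constant independent of $\ve$. The computations are completely parallel to those carried out in \cite{KMP} in the two-dimensional Liouville setting.
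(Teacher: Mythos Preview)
Your proposal is correct and follows precisely the route the paper indicates: the paper's own proof consists of a single sentence pointing to a Taylor expansion of $I_{\varepsilon}$ in the expanded variable together with the estimates of Proposition \ref{NLPLe1}, and you have simply written out that expansion in full, including the clean rewriting of the second-order term via equation \eqref{NLP1} and the orthogonality conditions. The only point not literally recorded in the paper is the bound $\norm{\partial_{\xi'}R}_{*}=O(\varepsilon)$, but your justification---that all $\xi$-dependence in Lemma \ref{AaLe1} and Lemma \ref{AsmLe1} enters through the smooth quantities $H(\cdot,\xi_{j})$, $k(\xi_{j})$ and $\mu_{j}(\xi)$, uniformly controlled on $\mathcal{O}$ by \eqref{mu2}---is exactly the kind of routine verification the paper defers to \cite{KMP}.
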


\begin{proof}
The proof follows directly from an application of Taylor expansion for
$\mathcal{F}_{\varepsilon}$ in the expanded domain $\Omega_{\varepsilon}$ and
from the estimates for the solution $\psi_{\xi^{\prime}}$ to Problem
(\ref{NLP1}) obtained in Proposition \ref{NLPLe1}.
\end{proof}

\section{Proof of the theorems}

\noindent In this section we carry out the proofs of our main results.

\subsection{Proof of Theorem \ref{teo1}}

\noindent Taking into account the result of Lemma \ref{VRLe1}, a solution to
Problem \eqref{In1} exists if we prove the existence of a critical point of
$\mathcal{F}_{\varepsilon}$, which automatically implies that $c_{ij}=0$ in
\eqref{Asm7} for all $i,j$. The qualitative properties of the solution found
follow from the ansatz.

\noindent Finding critical points of $\mathcal{F}_{\varepsilon}(\xi)$ is
equivalent to finding critical points of
\begin{equation}
\tilde{\mathcal{F}}_{\varepsilon}(\xi)=\mathcal{F}_{\varepsilon}(\xi
)-256\,\pi^{2}\,m\abs{\log\ve}. \label{uno}%
\end{equation}
On the other hand, if $\xi\in\mathcal{O}$, from Lemmas \ref{Ene1} and
\ref{VRLe2} we get the existence of constants $\alpha>0$ and $\beta$ such
that
\begin{equation}
\alpha\tilde{\mathcal{F}}_{\varepsilon}(\xi)+\beta=\varphi_{m}(\xi
)+\varepsilon\Theta_{\varepsilon}(\xi), \label{due}%
\end{equation}
with $\Theta_{\varepsilon}$ and $\nabla_{\xi}\Theta_{\varepsilon}$ uniformly
bounded in the considered region as $\varepsilon\rightarrow0$.

\noindent We shall prove that, under the assumptions of Theorems \ref{teo1}
and \ref{teo2}, $\tilde{\mathcal{F}}_{\varepsilon}$ has a critical point in
${\mathcal{O}}$ for $\varepsilon$ small enough. We start with a topological
lemma. We denote by $D$ the diagonal
\[
D:=\{\xi\in\Omega^{m}\,:\,\xi_{i}=\xi_{j}{\mbox { for
some }}i\not =j\},
\]
and we write $H^{\ast}:=H^{\ast}($ $\cdot$ $;\mathbb{K})$ for singular
cohomology with coefficients in a field $\mathbb{K}$.

\begin{lema}
\label{monica} If $H^{d}(\Omega)\not =0$ for some $d\geq1$, and $H^{j}%
(\Omega)=0$ for $j>d,$ then the homomorphism
\[
H^{md}(\Omega^{m},D)\longrightarrow H^{md}(\Omega^{m}),
\]
induced by the inclusion of pairs $(\Omega^{m},\emptyset)\hookrightarrow
(\Omega^{m},D),$ is an epimorphism. In particular, $H^{md}(\Omega^{m}%
,D)\not =0$.
\end{lema}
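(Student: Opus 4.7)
The plan is to apply the long exact sequence of the pair $(\Omega^m, D)$ together with the K\"unneth formula. Concretely, from exactness of
\[
\cdots \to H^{md}(\Omega^m, D) \to H^{md}(\Omega^m) \xrightarrow{\,r\,} H^{md}(D) \to \cdots,
\]
the epimorphism claim is equivalent to showing that the restriction map $r$ is zero. The ``in particular'' assertion is then immediate: K\"unneth (field coefficients) gives $H^*(\Omega^m) \cong H^*(\Omega)^{\otimes m}$, and the hypothesis $H^j(\Omega) = 0$ for $j > d$ forces $H^{md}(\Omega^m) \cong H^d(\Omega)^{\otimes m} \neq 0$, since the only partition of $md$ into $m$ non-negative integers each $\leq d$ has every summand equal to $d$.

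By linearity of $r$ it is enough to check that every class of the form $\pi_1^*\alpha_1 \smile \cdots \smile \pi_m^*\alpha_m$, with $\alpha_k \in H^d(\Omega)$, is sent to zero in $H^{md}(D)$. Writing $D = \bigcup_{i<j} D_{ij}$ with $D_{ij} = \{\xi \in \Omega^m : \xi_i = \xi_j\} \cong \Omega^{m-1}$ (via the ``collapse coordinates $i$ and $j$'' identification), the restriction of such a class to any individual piece $D_{ij}$ becomes a class on $\Omega^{m-1}$ whose cup-product expression contains the factor $\alpha_i \smile \alpha_j \in H^{2d}(\Omega) = 0$, as $2d > d$. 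Hence the restriction to each piece $D_{ij}$ vanishes.

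The remaining---and most delicate---step is to upgrade this piecewise vanishing to vanishing on the whole union $D$. I would do this via the Mayer--Vietoris (\v{C}ech-to-singular) spectral sequence
\[
E_1^{p,q} = \bigoplus_{|S| = p+1} H^q\Bigl(\bigcap_{(i,j)\in S} D_{ij}\Bigr) \Longrightarrow H^{p+q}(D),
\]
where every iterated intersection is a Cartesian power $\Omega^{c(S)}$, with $c(S) \leq m-1$ the number of equivalence classes on $\{1,\ldots,m\}$ generated by $S$; K\"unneth then yields $H^q(\Omega^{c(S)}) = 0$ whenever $q > c(S) d$. The aim is to force $E_\infty^{p,q} = 0$ along the antidiagonal $p + q = md$, so that $H^{md}(D) = 0$ and $r$ is automatically zero. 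I expect this combinatorial verification to be the principal obstacle: for small $p$ the relevant $E_1$ terms vanish by direct count, but for larger $p$ certain ``redundant'' subsets $S$---graphs with many cycles on few vertices---can contribute nonzero $E_1$ entries on the $md$-antidiagonal, and one must check that these are killed by the \v{C}ech differentials on higher pages before convergence. The toy case $m = 3$, $d = 1$, $\Omega = S^1$ illustrates the mechanism: the three rank-one $H^1$ contributions from the pairwise intersections collapse the triple intersection's $H^1$ via the alternating sum, giving $E_2^{2,1} = 0$ and $H^3(D) = 0$.
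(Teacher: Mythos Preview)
Your overall strategy coincides with the paper's: use the long exact sequence of the pair, show $H^{md}(D)=0$, and conclude via K\"unneth that $H^{md}(\Omega^m)\cong H^d(\Omega)^{\otimes m}\neq 0$. The cup-product computation on each $D_{ij}$ is correct but a detour; once you aim at $H^{md}(D)=0$ it plays no role.

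The genuine gap is the spectral-sequence step. You correctly observe that some $E_1^{p,q}$ with $p+q=md$ can be nonzero (your own $m=3$, $d=1$ example has $E_1^{2,1}=H^1(\Omega)$), and you then \emph{assume} these terms die on later pages without proving it. That is exactly the hard part, and ``I expect this combinatorial verification'' is not a proof. The difficulty arises because you index intersections by subsets $S$ of edges; a set $S$ with many redundant edges can have $c(S)$ as large as $m-1$ even when $|S|$ is large, so the naive degree count $q=md-p\le c(S)d$ does not force $E_1$-vanishing.

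The paper sidesteps this by organizing the induction differently. Set $\mathcal{F}_1=\{D_{ij}\}$ and recursively $\mathcal{F}_{k+1}=\{Z\cap Z':Z,Z'\in\mathcal{F}_k,\ Z\neq Z'\}$. The key elementary observation is that every $Z\in\mathcal{F}_k$ is homeomorphic to $\Omega^{m-k'}$ with $k'\ge k$ (intersecting two \emph{distinct} partial diagonals strictly drops the number of blocks). K\"unneth then gives $H^j(Z)=0$ for $j>(m-k)d$. A straight Mayer--Vietoris double induction---downward on $k$ and on the number of sets in the union---now yields
\[
H^j\Bigl(\textstyle\bigcup_{i=1}^{\ell} Z_i\Bigr)=0\quad\text{whenever }Z_1,\ldots,Z_\ell\in\mathcal{F}_k\text{ and }j>(m-k)d,
\]
because in the Mayer--Vietoris sequence the ``intersection'' term lies in $\mathcal{F}_{k+1}$ and the degree shift of $1$ is absorbed by $d\ge 1$. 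Taking $k=1$ gives $H^j(D)=0$ for all $j>(m-1)d$, in particular for $j=md$. This replaces your unfinished higher-differential analysis by a two-line induction; the trick is to filter by the number of equivalence classes, not by $|S|$.
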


\begin{proof}
Let us prove first that $H^{j}(D)=0$ if $j>(m-1)d$. For this purpose we write
\[
D=\bigcup_{1\leq i<j\leq m}X_{i,j},\quad{\mbox {where}}\quad X_{i,j}%
:=\{(x_{1},\ldots,x_{m})\in\Omega^{m}\,:\,x_{i}=x_{j}\},
\]
and consider the sets $\mathcal{F}_{0}:=\{\Omega^{m}\}$, $\mathcal{F}%
_{1}:=\{X_{i,j}:1\leq i<j\leq m\}$, and
\[
\mathcal{F}_{k+1}:=\{Z\cap Z^{\prime}:Z,Z^{\prime}\in\mathcal{F}_{k}\text{ and
}Z\neq Z^{\prime}\},\text{ \ \ }k=1,...,m-2.
\]
Note that
\[
Z\cong\Omega^{m-k^{\prime}}\text{ for some }k\leq k^{\prime}\leq m-1\text{
\ if }Z\in\mathcal{F}_{k},\text{ }k=0,...,m-1,
\]
where $\cong$ means that the sets are homeomorphic. K\"{u}nneth's formula%
\begin{equation}
H^{j}(\Omega^{m-k})=%
{\textstyle\bigoplus\limits_{p+q=j}}
\left(  H^{p}(\Omega)\otimes H^{q}(\Omega^{m-k-1})\right)  \label{kf}%
\end{equation}
(see, for example, \cite[Proposition 8.18]{d}) yields inductively that, for
$0\leq k\leq m-1$,%
\begin{equation}
H^{j}(Z)=0\quad{\mbox{if}}\quad Z\in\mathcal{F}_{k}\quad{\mbox{and}}\quad
j>(m-k)d.\label{previous}%
\end{equation}
We claim that, for each $0\leq k\leq m-1,$ one has that%
\begin{equation}
H^{j}(Z_{1}\cup\cdot\cdot\cdot\cup Z_{\ell})=0\ \ \ \text{if }Z_{1}%
,...,Z_{\ell}\in\mathcal{F}_{k}\text{ and }j>(m-k)d.\label{claim}%
\end{equation}
Let us prove this claim. Since $\mathcal{F}_{m-1}$ has only one element and
(\ref{previous}) holds, we have that the claim is true for $k=m-1.$ Assume
that the claim is true for $k+1$ with $k+1\leq m-1$ and let us then prove it
for $k.$ We do this by induction on $\ell.$ If $\ell=1$ the assertion reduces
to (\ref{previous}). Now assume that the assertion is true for every union of
at most $\ell-1$ sets in $\mathcal{F}_{k},$ and let $Z_{1},...,Z_{\ell}%
\in\mathcal{F}_{k}$ be pairwise distinct sets. Consider the Mayer-Vietoris
sequence%
\begin{equation}
\cdot\cdot\cdot\rightarrow H^{j-1}\left(
{\textstyle\bigcup\limits_{i=1}^{\ell-1}}
(Z_{i}\cap Z_{\ell})\right)  \rightarrow H^{j}(Z_{1}\cup\cdot\cdot\cdot\cup
Z_{\ell})\rightarrow H^{j}(Z_{1}\cup\cdot\cdot\cdot\cup Z_{\ell-1})\oplus
H^{j}(Z_{\ell})\rightarrow\cdot\cdot\cdot\label{mv}%
\end{equation}
Our induction hypothesis on $\ell$ yields that $H^{j}(Z_{1}\cup\cdot\cdot
\cdot\cup Z_{\ell-1})=0$ and $H^{j}(Z_{\ell})=0$ if $j>(m-k)d$. Since
$Z_{1},...,Z_{\ell}$ are pairwise distinct, we have that $Z_{i}\cap Z_{\ell
}\in\mathcal{F}_{k+1}$ for each $i=1,...,\ell-1$ and, since we are assuming
that the claim is true for $k+1$ we have that
\[
H^{j-1}\left(
{\textstyle\bigcup\limits_{i=1}^{\ell-1}}
(Z_{i}\cap Z_{\ell})\right)  =0\text{ \ if }j-1>(m-(k+1))d.
\]
Note that $j>(m-k)d$ implies $j-1>(m-(k+1))d.$ This proves that both ends of
the exact sequence (\ref{mv}) are zero if $j>(m-k)d,$ hence the middle term is
also zero in this case. This concludes the proof of claim (\ref{claim}).

\noindent Now, since $D=\bigcup_{Y\in\mathcal{F}_{1}}Y,$ assertion
(\ref{claim}) with $k=1$ yields that $H^{j}(D)=0$ if $j>(m-1)d.$ So the exact
cohomology sequence
\[
H^{md}(\Omega^{m},D)\longrightarrow H^{md}(\Omega^{m})\longrightarrow
H^{md}(D)=0
\]
gives that $H^{md}(\Omega^{m},D)\longrightarrow H^{md}(\Omega^{m})$ is an
epimorphism. But (\ref{kf}) implies that $H^{md}(\Omega^{m})\neq0.$ Therefore,
$H^{md}(\Omega^{m},D)\neq0,$ as claimed.
\end{proof}

For each positive number $\delta$ define
\begin{align*}
\Omega_{\delta}  &  :=\{\xi\in\Omega\,:\,{\mbox {dist}}(\xi,\partial
\Omega)>\delta\}\\
{\mathfrak{D}}_{\delta}  &  :=\{\xi=(\xi_{1},\ldots,\xi_{m})\in\Omega^{m}%
:\xi_{j}\in\Omega_{\delta}\}.
\end{align*}

\begin{lema}
\label{bdryflow}Given $K>0$ there exists $\delta_{0}>0$ such that, for each
$\delta\in(0,\delta_{0}),$ the following holds: For every $\xi=(\xi
_{1},...,\xi_{m})\in\partial${$\mathfrak{D}$}$_{\delta}$ with $\left\vert
\varphi_{m}(\xi)\right\vert \leq K$ there exists an $i\in\{1,...,m\}$ such
that%
\[%
\begin{array}
[c]{ll}%
\nabla_{\xi_{i}}\varphi_{m}(\xi)\not =0 & \text{if }\xi_{i}\in\Omega_{\delta
}\\
\nabla_{\xi_{i}}\varphi_{m}(\xi)\cdot\tau\not =0\text{ \ for some }\tau\in
T_{\xi_{i}}(\partial\Omega_{\delta}) & \text{if }\xi_{i}\in\partial
\Omega_{\delta}%
\end{array}
\]
where $T_{\xi_{i}}(\partial\Omega_{\delta})$ denotes the tangent space to
$\partial\Omega_{\delta}$ at the point $\xi_{i}.$
\end{lema}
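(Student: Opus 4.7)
I would prove this lemma by contradiction. Assume that for some $K>0$ there exist sequences $\delta_n\downarrow 0$ and $\xi^n=(\xi^n_1,\ldots,\xi^n_m)\in\partial\mathfrak{D}_{\delta_n}$ with $|\varphi_m(\xi^n)|\leq K$ such that, for every index $i$, either $\xi^n_i\in\Omega_{\delta_n}$ with $\nabla_{\xi_i}\varphi_m(\xi^n)=0$, or $\xi^n_i\in\partial\Omega_{\delta_n}$ with $\nabla_{\xi_i}\varphi_m(\xi^n)$ parallel to the inward normal $\nabla d(\xi^n_i)$ (hence normal to $\partial\Omega_{\delta_n}$), where $d(\xi):=\dist(\xi,\partial\Omega)$. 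Since $\xi^n\in\partial\mathfrak{D}_{\delta_n}$, at least one component sits on $\partial\Omega_{\delta_n}$, and WLOG this is $\xi^n_1$.

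The main analytic ingredient is the near-boundary expansion of the Robin function. By a standard local straightening of $\partial\Omega$ together with comparison to the explicit biharmonic Green function on a half-space, one has
\[
H(\xi,\xi)=8\log\bigl(2d(\xi)\bigr)+R(\xi),
\]
with $R$ continuous on $\overline\Omega$. Differentiating, for $\xi_i$ close to $\partial\Omega$,
\[
\nabla_{\xi_i}\varphi_m(\xi)=-\frac{8}{d(\xi_i)}\nabla d(\xi_i)+Q_i(\xi),
\]
where $Q_i(\xi)$ collects the bounded contributions $-2\nabla\log k(\xi_i)-\nabla R(\xi_i)-2\sum_{j\neq i}\nabla_x G(\xi_i,\xi_j)$. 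Since $\nabla d(\xi_i)$ is purely normal to the level set $\partial\Omega_{d(\xi_i)}$ at $\xi_i$, the tangential component of $\nabla_{\xi_i}\varphi_m$ coincides with the tangential projection of $Q_i(\xi)$. Under the negated hypothesis, the tangential part of $Q_i(\xi^n)$ must vanish for every boundary index, and $\nabla_{\xi_i}\varphi_m(\xi^n)$ must vanish entirely for every interior one.

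Next I would exploit the constraint $|\varphi_m(\xi^n)|\leq K$ to restrict the geometry of $\xi^n$. Each component on $\partial\Omega_{\delta_n}$ contributes $-H(\xi^n_j,\xi^n_j)\sim-8\log(2\delta_n)\to+\infty$ to $\varphi_m$, a divergence which can only be cancelled by pair collisions producing $-G(\xi^n_i,\xi^n_j)\sim 8\log|\xi^n_i-\xi^n_j|\to-\infty$. Passing to a subsequence, the $\xi^n$ organize into clusters with inner scale $\rho_n\downarrow 0$ pinned to $\delta_n$ by this balance. After rescaling around each cluster by $\rho_n$ (and passing to the half-space $\mathbb{R}^4_+$ when the cluster reaches $\partial\Omega$), the vanishing of the tangential part of $Q_i(\xi^n)$ passes to the limit and becomes an explicit algebraic force-balance for the limit configuration, expressed via the half-space biharmonic Green function. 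The main obstacle I anticipate is the combinatorial case-work needed to classify the cluster patterns and verify that in every pattern the limiting balance fails: for each cluster, the tangential components of the pairwise interactions can vanish only for degenerate configurations in which all cluster members lie along the normal direction, a constraint ruled out by $d(\xi^n_j)\geq\delta_n$ which keeps every cluster member inside $\overline{\Omega}_{\delta_n}$. This contradiction yields the claim.
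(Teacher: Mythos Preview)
Your overall strategy matches the paper's: argue by contradiction, use $|\varphi_m(\xi^n)|\leq K$ together with $H(\xi_l^n,\xi_l^n)\to-\infty$ (for some boundary index $l$) to force $\rho_n:=\min_{i\neq j}|\xi_i^n-\xi_j^n|\to 0$, then blow up at scale $\rho_n$ and derive a contradiction from the limiting configuration. Two points, however, need correction.

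First, a minor one: you call $Q_i(\xi)$ ``bounded'', but it contains $-2\sum_{j\neq i}\nabla_x G(\xi_i,\xi_j)$, which blows up like $|\xi_i-\xi_j|^{-1}$ under collisions. This is not fatal, since you later rescale precisely to handle these terms, but the decomposition as written (``normal singular part $+$ bounded remainder'') is not valid once clustering occurs.

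The real gap is in your endgame. After rescaling a cluster of $k\geq 2$ points by $\rho_n$, the negated hypothesis passes to the limit as: for interior limit points $\bar x_i$ the full gradient of $\Psi_k(x):=-8\sum_{i\neq j}\log|x_i-x_j|$ vanishes, and for limit points $\bar x_i$ on the limiting hyperplane the tangential part vanishes. You assert that this forces ``all cluster members to lie along the normal direction'' and that this is ``ruled out by $d(\xi_j^n)\geq\delta_n$''. Neither step is justified. For $k\geq 3$ it is not clear that tangential balance implies normal collinearity, and in any case $d(\xi_j^n)\geq\delta_n$ does \emph{not} exclude such configurations: one can place $\xi_1^n\in\partial\Omega_{\delta_n}$ and the remaining cluster points directly inward along the normal, all with $d>\delta_n$. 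What actually kills this scenario is that those inward points are then \emph{interior}, so their \emph{full} gradient must vanish, which it does not; but your text does not say this.

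The paper closes the argument with a one-line Euler identity that you are missing and that replaces all of your case-work: since $\Psi_k(\lambda x)=\Psi_k(x)-8k(k-1)\log\lambda$, one has
\[
\sum_{i=1}^k \bar x_i\cdot\nabla_{x_i}\Psi_k(\bar x)=\frac{d}{d\lambda}\Big|_{\lambda=1}\Psi_k(\lambda\bar x)=-8k(k-1)\neq 0.
\]
If the scaling is centered at a point of the limiting hyperplane, then for each boundary index $i$ the vector $\bar x_i$ is tangential, so $\bar x_i\cdot\nabla_{x_i}\Psi_k$ uses only the tangential component; for interior indices one uses the full gradient. Under the negated hypothesis every summand vanishes, contradicting the identity. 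The analogous identity for the half-space functional $\varphi_{k,\mathcal{H}}$ handles the case where the cluster scale is comparable to the distance to $\partial\Omega$. This dilation trick is the missing ingredient in your proposal.
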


\noindent\textit{Proof}. \ We first need to establish some facts related to
the regular part of the Green function on the half hyperplane
\[
\mathcal{H}:=\left\{  x=(x_{1},x_{2},x_{3},x_{4})\in\mathbb{R}^{4}%
\;:\;x_{4}\geq0\right\}  .
\]
It is well known that the regular part of the Green function on $\mathcal{H}$
is given by
\[
H(x,y)=8\log\abs{x-\bar{y}},\quad\bar{y}=(y_{1},y_{2},y_{3},-y_{4}),
\]
for $x,y\in\mathcal{H}$ and the Green function is
\[
\displaystyle{G(x,y)=-8\log\abs{x-y}+8\log\abs{x-\bar{y}}}.
\]
Consider the function of $k\geq2$ distinct points of $\mathcal{H}$
\[
\Psi_{k}(x_{1},\dots,x_{k}):=-8\sum_{i\neq j}\log\abs{x_i-x_j},
\]
and denote by $I_{+}$ and $I_{0}$ the set of indices $i$ for which
$(x_{i})_{4}>0$ and $(x_{i})_{4}=0$, respectively. Define also
\[
\varphi_{k,\mathcal{H}}(x_{1},\dots,x_{k})=-8\sum_{j=1}^{k}\log
\abs{x_j-\bar{x_j}}+8\sum_{i\neq j}\log\frac{\abs{x_i-x_j}}%
{\abs{x_i-\bar{x_j}}}.
\]

\begin{claim}
\label{Clf1} We have the following alternative: Either
\[
\nabla_{x_{i}}\Psi_{k}(x_{1},\dots,x_{k})\neq0\quad\hbox{ for some }i\in
I_{+},
\]
or
\[
\partial_{(x_{i})_{j}}\Psi_{k}(x_{1},\dots,x_{k})\neq0\quad\hbox{ for
some }i\in I_{0}\hbox{ and }j\in\{1,2,3\},
\]
where $\partial_{(x_{i})_{j}} \equiv\frac{\partial}{\partial(x_{i})_{j}}$.
\end{claim}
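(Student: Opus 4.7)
The plan is to argue by contradiction via a Pohozaev-type identity obtained by dilating about the origin. The crucial property is that the map $x\mapsto\lambda x$ with $\lambda>0$ preserves $\mathcal{H}$ and its boundary $\{x_{4}=0\}$, and therefore preserves the partition $I_{+},I_{0}$ of the indices.

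First I would record the scaling behaviour of $\Psi_{k}$. Since $|\lambda x_{i}-\lambda x_{j}|=\lambda|x_{i}-x_{j}|$ for $\lambda>0$, one has
\[
\Psi_{k}(\lambda x_{1},\ldots,\lambda x_{k})=\Psi_{k}(x_{1},\ldots,x_{k})-8k(k-1)\log\lambda.
\]
Differentiating this relation at $\lambda=1$ yields the Euler identity
\[
\sum_{i=1}^{k} x_{i}\cdot\nabla_{x_{i}}\Psi_{k}(x_{1},\ldots,x_{k})=-8k(k-1),
\]
which is nonzero since $k\geq 2$.

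To conclude, I would assume that both alternatives of the claim fail and show this forces the left-hand side to vanish. For each $i\in I_{+}$ the whole gradient $\nabla_{x_{i}}\Psi_{k}$ is zero, so $x_{i}\cdot\nabla_{x_{i}}\Psi_{k}=0$. For each $i\in I_{0}$ one has $(x_{i})_{4}=0$, and by hypothesis $\partial_{(x_{i})_{j}}\Psi_{k}=0$ for $j\in\{1,2,3\}$; hence
\[
x_{i}\cdot\nabla_{x_{i}}\Psi_{k}=\sum_{j=1}^{3}(x_{i})_{j}\,\partial_{(x_{i})_{j}}\Psi_{k}+(x_{i})_{4}\,\partial_{(x_{i})_{4}}\Psi_{k}=0.
\]
Summing over $i\in I_{+}\cup I_{0}=\{1,\ldots,k\}$ then gives $0=-8k(k-1)$, the desired contradiction.

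The only subtlety, which the argument handles automatically, is that the hypotheses of the claim leave the normal derivatives $\partial_{(x_{i})_{4}}\Psi_{k}$ at boundary points completely unconstrained. However, in the dilation identity these are multiplied by $(x_{i})_{4}=0$ and so drop out; no information about normal derivatives is ever used. I therefore expect no real obstacle beyond verifying the scaling identity and this simple bookkeeping about which components of $x_{i}$ vanish on the boundary.
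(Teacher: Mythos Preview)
Your proof is correct and is essentially the same as the paper's: both differentiate $\Psi_{k}(\lambda x_{1},\ldots,\lambda x_{k})$ at $\lambda=1$ to obtain $\sum_{i}x_{i}\cdot\nabla_{x_{i}}\Psi_{k}=-8k(k-1)\neq0$ and conclude from there. You spell out explicitly the point the paper leaves implicit, namely that for $i\in I_{0}$ the normal derivative $\partial_{(x_{i})_{4}}\Psi_{k}$ is multiplied by $(x_{i})_{4}=0$ and hence does not contribute.
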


\noindent\textit{Proof}. \ We have that
\[
\frac{\partial}{\partial\lambda}\Psi_{k}(\lambda x_{1},\dots,\lambda
x_{k})\big|_{\lambda=1}=\sum_{i\in I_{+}}\nabla_{x_{i}}\Psi_{k}(x_{1}%
,\dots,x_{k})\cdot x_{i}+\sum_{i\in I_{0}}\nabla_{x_{i}}\Psi_{k}(x_{1}%
,\dots,x_{k})\cdot x_{i}.
\]
On the other hand
\[
\frac{\partial}{\partial\lambda}\Psi_{k}(\lambda x_{1},\dots,\lambda
x_{k})\big|_{\lambda=1}=-8k(k-1)\neq0,
\]
and Claim \ref{Clf1} follows.

\begin{claim}
\label{Clf2} For any $k$ distinct points $x_{i}\in\operatorname{Int}%
\mathcal{H}$ we have $\displaystyle{\nabla\varphi_{k,\mathcal{H}}}(x_{1}%
,\dots,x_{k})\neq0$.
\end{claim}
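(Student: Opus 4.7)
My plan is to prove Claim \ref{Clf2} by exploiting the scaling symmetry $x_i \mapsto \lambda x_i$ about the origin, which is the natural dilation symmetry of the half-space $\mathcal{H}$, and which preserves the interior $\mathrm{Int}\,\mathcal{H}$ for $\lambda>0$. The point is that, although the Green function on a bounded domain has no scaling invariance, the expression for $\varphi_{k,\mathcal{H}}$ decomposes into two parts with clean and different scaling behavior, and this asymmetry forces the Euler-type identity to be nonzero.

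More concretely, since $\overline{\lambda x_j}=\lambda\bar{x}_j$, one has $|\lambda x_i-\overline{\lambda x_j}|=\lambda|x_i-\bar{x}_j|$ and $|\lambda x_i-\lambda x_j|=\lambda|x_i-x_j|$. The interaction terms
\[
\log\frac{|x_i-x_j|}{|x_i-\bar{x}_j|}
\]
are therefore scale-invariant, whereas each self-interaction term $-8\log|x_j-\bar{x}_j|=-8\log(2(x_j)_4)$ picks up an extra $-8\log\lambda$ under the dilation. Summing, I expect
\[
\varphi_{k,\mathcal{H}}(\lambda x_1,\dots,\lambda x_k)=-8k\log\lambda+\varphi_{k,\mathcal{H}}(x_1,\dots,x_k).
\]

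Differentiating at $\lambda=1$ and applying the chain rule as in the proof of Claim \ref{Clf1} then yields the Euler-type identity
\[
\sum_{i=1}^{k}\nabla_{x_i}\varphi_{k,\mathcal{H}}(x_1,\dots,x_k)\cdot x_i=-8k.
\]
Since $k\geq 1$ and the right-hand side is nonzero, it is impossible for all $\nabla_{x_i}\varphi_{k,\mathcal{H}}$ to vanish simultaneously, and the claim follows.

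There is no serious obstacle here: the entire proof is essentially an Euler-homogeneity computation, exactly in the spirit of the proof of Claim \ref{Clf1} but applied to the scaling (rather than the sum-type identity used there). The only point that requires care is verifying that the interaction part is genuinely scale-invariant and that the self-interaction part has the stated log-scaling, both of which reduce to the elementary observation that reflection across $\partial\mathcal{H}$ commutes with dilations centered at the origin.
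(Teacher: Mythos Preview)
Your proof is correct and follows exactly the same scaling argument as the paper's proof: differentiate $\varphi_{k,\mathcal{H}}(\lambda x_1,\dots,\lambda x_k)$ at $\lambda=1$ and use that the result is nonzero. In fact your constant $-8k$ is the correct one; the paper records $-8k(k-1)$, which appears to be a copy-paste slip from Claim~\ref{Clf1}, but of course either value is nonzero for $k\geq 2$ so the argument goes through.
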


\noindent\textit{Proof}. \ We have that
\[
\frac{\partial}{\partial\lambda}\varphi_{k,\mathcal{H}}(\lambda x_{1}%
,\dots,\lambda x_{k})\big|_{\lambda=1}=\sum_{i=1}^{k}\nabla_{x_{i}}%
\varphi_{k,\mathcal{H}}(x_{1},\dots,x_{k})\cdot x_{i},
\]
On the other hand
\[
\frac{\partial}{\partial\lambda}\varphi_{k,\mathcal{H}}(\lambda x_{1}%
,\dots,\lambda x_{k})\big|_{\lambda=1}=-8k(k-1)\neq0,
\]
and Claim \ref{Clf2} follows.

\noindent Now we will need an estimate for the regular part $H(x,y)$ of the
Green's function for points $x,y$ close to $\partial\Omega$.

\begin{claim}
\label{Clf3} There exists $C_{1},C_{2}>0$ constants such that for any $x,
y\in\Omega$
\[
\abs{\nabla _x H(x,y)} + \abs{\nabla _y H(x,y)} \leq C_{1} \min\left\{
\frac{1}{\abs{x-y}}, \frac{1}{\operatorname{dist} (y,\partial\Omega)}\right\}
+ C_{2}.
\]

\end{claim}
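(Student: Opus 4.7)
The plan is to compare $H(x,y)$ near the boundary with the explicit half-space regular part $8\log\abs{x-\bar{y}}$ (recalled in the proof of Lemma~\ref{bdryflow}) and control the difference by standard Schauder estimates for the bi-harmonic Navier problem. Observe that $H(\cdot,y) = G(\cdot,y) + 8\log\abs{\cdot - y}$ is bi-harmonic in $\Omega$ with boundary data $H(\cdot,y)|_{\partial\Omega}=8\log\abs{x-y}$ and $\Delta_x H(\cdot,y)|_{\partial\Omega}= 16/\abs{x-y}^2$. If $d_y:=\operatorname{dist}(y,\partial\Omega)\geq \delta_0$ for a fixed small $\delta_0>0$, these data are smooth and uniformly bounded, so elliptic regularity gives $\abs{\nabla_x H}+\abs{\nabla_y H}\leq C$, which is absorbed into $C_2$ because $1/d_y\leq 1/\delta_0$.

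\emph{Boundary straightening.} Assume $d_y<\delta_0$, let $y^*\in\partial\Omega$ be a nearest boundary point, and pick a smooth diffeomorphism $\Phi$ from a tubular neighbourhood $U$ of $y^*$ onto a half-ball in $\mathcal{H}$ which straightens $\partial\Omega\cap U$ and satisfies $D\Phi(y^*)\in O(4)$. Let $\eta\in C_c^\infty(U)$ be equal to $1$ near $y^*$, write $\bar{w}:=(w_1,w_2,w_3,-w_4)$, and define the local model
\[
M(x,y) := 8\,\eta(x)\,\log\bigl|\Phi(x)-\overline{\Phi(y)}\bigr|.
\]
Orthogonality of $D\Phi(y^*)$ yields $|\Phi(x)-\overline{\Phi(y)}| \geq c\,\max\{\abs{x-y},d_y\}$ on $\operatorname{supp}\eta$, so direct differentiation gives $\abs{\nabla_x M}+\abs{\nabla_y M}\leq C_1\min\{1/\abs{x-y},1/d_y\}+C_2$ throughout $\Omega$ (with $M\equiv 0$ outside $U$).

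\emph{Remainder and conclusion.} Set $R := H - M$. Then $\Delta^2 R = -\Delta^2 M$ in $\Omega$, where $\Delta^2 M$ is smooth, compactly supported in the transition region of $\eta$, and bounded uniformly in $y$. The Navier boundary data of $R$ are the differences between those of $H$ and $M$; since $\Phi$ carries $\partial\Omega\cap U$ into $\{w_4=0\}$, for $x\in\partial\Omega\cap U$ the vector $\overline{\Phi(y)}$ is an $O(d_y)$-perturbation of $\Phi(y)$, and $M$ reproduces exactly the leading logarithmic and inverse-square singularities of the boundary data of $H$ at $x=y^*$. Consequently both discrepancies extend to $C^2$ functions on $\partial\Omega$ with norms bounded uniformly in $y$, and Schauder estimates for the Navier bi-harmonic problem give $\norm{\nabla R(\cdot,y)}_{L^\infty(\Omega)} \leq C$ uniformly. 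For $x$ outside $U$ one has $\abs{x-y}$ bounded below, and the same Schauder argument applied directly to $H$ provides the uniform bound. Combining these with the explicit estimate on $\nabla M$ yields the claim for both $\nabla_x H$ and $\nabla_y H$.

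\emph{Main obstacle.} The delicate step is verifying that $M$ absorbs every singular contribution to the boundary data of $H$ that blows up as $d_y\to 0$, leaving a $C^2$ remainder with norm independent of $y$. This relies on $D\Phi(y^*)\in O(4)$, which gives $|\Phi(x)-\Phi(y)|=\abs{x-y}+O(\abs{x-y^*}^2)$ near $y^*$ and hence ensures that the discrepancies $8\log\abs{x-y}-M|_{\partial\Omega}$ and $16/\abs{x-y}^2-\Delta_x M|_{\partial\Omega}$ remain $C^2$-bounded as $y\to\partial\Omega$. Everything else reduces to routine bi-harmonic regularity.
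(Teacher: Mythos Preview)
Your overall strategy---subtract a half-space model for the regular part and bound the remainder by elliptic estimates---is exactly what the paper does. The paper simply sets $\psi(x,y)=H(x,y)-8\log|x-\bar y|$, where $\bar y$ is the geometric reflection of $y$ across $\partial\Omega$, and asserts that $\psi$ and $\nabla\psi$ are uniformly bounded; the claimed inequality then follows from $1/|x-\bar y|\leq C\min\{1/|x-y|,1/d_y\}+C$.

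Your implementation, however, contains a gap. You assert that $\Delta^2 M$ is ``compactly supported in the transition region of $\eta$, and bounded uniformly in $y$''. This would require $x\mapsto\log\bigl|\Phi(x)-\overline{\Phi(y)}\bigr|$ to be bi-harmonic on $\{\eta=1\}$, but it is not: the function $\log|\,\cdot-\overline{\Phi(y)}\,|$ is bi-harmonic only in the \emph{straightened} variable $w=\Phi(x)$, and pulling back through a non-affine diffeomorphism produces error terms in $\Delta_x^2$ of third and fourth order in $w$ with nonvanishing coefficients. These hit $\log|w-\overline{\Phi(y)}|$ to give a source of size $O\bigl(|w-\overline{\Phi(y)}|^{-3}\bigr)$ near $y^*$, which is neither supported in the cutoff annulus nor bounded as $d_y\to 0$; in fact it fails to lie in $L^p$ uniformly for any $p>4/3$, so standard $L^p$ or Schauder theory does not directly yield a uniform $C^1$ bound on $R$.

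The paper's choice of comparison function avoids this entirely: since $\bar y\notin\overline\Omega$, the map $x\mapsto 8\log|x-\bar y|$ is \emph{exactly} bi-harmonic in $\Omega$ in the original coordinates (recall $\Delta\log|x|=2|x|^{-2}$ and $|x|^{-2}$ is harmonic in $\mathbb{R}^4$), so $\psi=H-8\log|x-\bar y|$ is bi-harmonic and one only has to control its Navier boundary data---precisely the routine computation you already sketched. Replacing your $\Phi$-based model by the direct reflection $8\log|x-\bar y|$ closes the gap with no other change to your argument.
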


\noindent\textit{Proof}. \ For $y\in\Omega$ a point close to $\partial\Omega$
we denote by $\bar{y}$ its uniquely determined reflection with respect to
$\partial\Omega$. Define $\psi(x,y)=H(x,y)+8\log{\frac{1}{|x-\bar{y}|}}$. It
is straightforward to see that $\psi$ is bounded in $\bar{\Omega}\times
\bar{\Omega}$ and that $|\nabla_{x}\psi(x,y)|+|\nabla_{y}\psi(x,y)|\leq C$ for
some positive constant $C$. Claim \ref{Clf3} follows.

\medskip We have now all elements to prove Lemma \ref{bdryflow}. Assume, by
contradiction, that for some sequence $\delta_{n}\rightarrow0$ there are
points $\xi^{n}\in\partial{\mathfrak{D}}_{\delta_{n}},$ such that $\left\vert
\varphi_{m}(\xi^{n})\right\vert \leq K$ and, for every $i\in\{1,...,m\},$
\begin{equation}
\nabla_{\xi_{i}^{n}}\varphi_{m}(\xi^{n})=0\quad\hbox{ if }\xi_{i}^{n}\in
\Omega_{\delta_{n}}, \label{Lef41}%
\end{equation}
and
\begin{equation}
\nabla_{\xi_{i}^{n}}\varphi_{m}(\xi^{n})\cdot\tau=0\quad\hbox{ if }\xi_{i}%
^{n}\in\partial\Omega_{\delta_{n}}, \label{Lef42}%
\end{equation}
for any vector $\tau$ tangent to $\partial\Omega_{\delta_{n}}$ at $\xi_{i}%
^{n}$. It follows that there exists a point $\xi_{l}^{n}\in\partial
\Omega_{\delta_{n}}$ such that $H(\xi_{l}^{n},\xi_{l}^{n})\rightarrow-\infty$
as $n\rightarrow\infty.$ Since $\left\vert \varphi_{m}(\xi^{n})\right\vert
\leq K$, there are necessarily two distint points $\xi_{i}^{n}$ and $\xi
_{j}^{n}$ coming closer to each other, that is,
\[
\rho_{n}:=\inf_{i\neq j}\abs{\xi_i^n-\xi_j^n}\rightarrow0\quad
\hbox{ as }\;n\rightarrow\infty.
\]
Without loss of generality we can assume $\rho_{n}=\abs{\xi_1^n-\xi_2^n}$. We
define $\displaystyle{x_{j}^{n}:=\frac{\xi_{j}^{n}-\xi_{1}^{n}}{\rho_{n}}}$.
Thus, up to a subsequence, there exists a $k$, $2\leq k\leq m,$ such that
\[
\lim_{n\rightarrow\infty}\abs{x_j^n}<+\infty,\;j=1,\dots,k,\quad\hbox{
and }\quad\lim_{n\rightarrow\infty}\abs{x_j^n}=+\infty,\;j>k.
\]
For $j\leq k$ we set
\[
\bar{x}_{j}=\lim_{n\rightarrow\infty}x_{j}^{n}.
\]
We consider two cases:

\noindent(1) Either
\[
\frac{\operatorname{dist}(\xi_{1}^{n},\partial\Omega_{\delta_{n}})}{\rho_{n}%
}\rightarrow+\infty,
\]
(2) or there exists $C_{0}<+\infty$ such that for almost all $n$ we have
\[
\frac{\operatorname{dist}(\xi_{1}^{n},\partial\Omega_{\delta_{n}})}{\rho_{n}%
}<C_{0}.
\]
\emph{Case 1}. It is easy to see that in this case we actually have
\[
\frac{\operatorname{dist}(\xi_{j}^{n},\partial\Omega_{\delta_{n}})}{\rho_{n}%
}\rightarrow+\infty,\quad j=1,\dots,k.
\]
Furthermore, the points $\xi_{1}^{n},\dots,\xi_{k}^{n}$ are all in the
interior of $\Omega_{\delta_{n}}$, hence (\ref{Lef41}) is satisfied for all
partial derivates $\nabla_{\xi_{j}}$, $j\leq k$. Define
\[
\tilde{\varphi}_{m}(x_{1},\dots,x_{m}):=\varphi_{m}(\xi_{1}^{n}+\rho_{n}%
x_{1},\xi_{1}^{n}+\rho_{n}x_{2},\dots,\xi_{1}^{n}+\rho_{n}x_{k},\xi_{k+1}%
^{n}+\rho_{n}x_{k+1},\dots, \xi_{m}^{n}+\rho_{n}x_{m}),
\]
and $x=(x_{1},\dots,x_{m})$. We have that, for all $l=1,2,\;j=1,\dots,k,$%
\[
\partial_{(x_{j})_{l}}\tilde{\varphi}_{m}(x)=\rho_{n}\partial_{(\xi_{j})_{l}%
}\varphi_{m}(\xi_{1}^{n}+\rho_{n}x_{1},\dots,\xi_{1}^{n}+\rho_{n}x_{k},
\xi_{k+1}^{n}+\rho_{n}x_{k+1} , \dots, \xi_{m}^{n}+\rho_{n}x_{m}).
\]
Then at $\bar{x}=(\bar{x}_{1},\dots,\bar{x}_{k},0,\dots,0)$ we have
\[
\partial_{(x_{j})_{l}}\tilde{\varphi}_{m}(\bar{x})=0.
\]
On the other hand, using Claim~\ref{Clf3} and letting $n\rightarrow\infty$, we
obtain
\[
\lim_{n\rightarrow\infty}\rho_{n}\partial_{(\xi_{j})_{l}}\varphi_{m}(\xi
_{1}^{n}+\rho_{n}\tilde{x}_{1},\dots,\xi_{m}^{n}+\rho_{n}\tilde{x}_{m}%
)=8\sum_{i\neq j,\,i\leq k}\partial_{(x_{j})_{i}}\log
\abs{\bar{x}_i-\bar{x}_j}=0,
\]
a contradiction with Claim~\ref{Clf1}.

\medskip\noindent\emph{Case 2}. In this case we actually have
\[
\frac{\operatorname{dist}(\xi_{j}^{n},\partial\Omega_{\delta_{n}})}{\rho_{n}%
}<C_{1},\quad j=1,\dots,m,
\]
for some constant $C_{1}>0$ and for almost all $n$. If the points $\xi_{j}%
^{n}$ are all interior to $\Omega_{\delta_{n}}$, we argue as in Case 1 above
to reach a contradiction to Claim~\ref{Clf2}.

\noindent Therefore, we assume that for some $j^{\ast}$ we have $\xi_{j^{\ast
}}^{n}\in\partial\Omega_{\delta_{n}}$. Assume first that there exists a
constant $C$ such that $\delta_{n}\leq C\rho_{n}$. Consider the following sum
\[
s_{n}:=\sum_{i\neq j}G(\xi_{j}^{n},\xi_{i}^{n}).
\]
In this case it is not difficult to see that $s_{n}=O(1)$ as $n\rightarrow
+\infty$. On the other hand
\[
\sum_{j}H(\xi_{j}^{n},\xi_{j}^{n})\leq H(\xi_{j^{\ast}}^{n},\xi_{j^{\ast}}%
^{n})+C\leq8\log\abs{\xi_{j^*}^n-\bar{\xi}_{j^*}^n}+C,
\]
where $\bar{\xi}_{j^{\ast}}^{n}$ is the reflection of the point $\xi_{j^{\ast
}}^{n}$ with respect to $\partial\Omega$. Since $\abs{\xi_{j^*}^n -
\bar{\xi}_{j^*}^n}\leq2\delta_{n}$ we have that
\[
\sum_{j}H(\xi_{j}^{n},\xi_{j}^{n})\rightarrow-\infty,\quad
\hbox{ as }n\rightarrow\infty.
\]
But $\left\vert \varphi_{m}(\xi^{n})\right\vert \leq K$, a contradiction.

\noindent Finally assume that $\rho_{n}=o(\delta_{n})$. In this case after
scaling with $\rho_{n}$ around $\xi_{j^{\ast}}^{n}$, and arguing similarly as
in the Case 1 we get a contradiction with Claim~\ref{Clf1} since those points
$\xi_{j}^{n}$ which lie on $\partial\Omega_{\delta_{n}}$, after passing to the
limit, give rise to points that lie on the same straight line. Thus this case
cannot occur. \qed\noindent

\bigskip

We shall now show that we can perturbe the gradient vector field of
$\varphi_{m}$ near $\partial${$\mathfrak{D}$}$_{\delta}$ to obtain a new
vector field with the same stationary points, such that $\varphi_{m}$ is a
Lyapunov function for the associated flow and {$\mathfrak{D}$}$_{\delta}%
\cap\varphi_{m}^{-1}[-K,K]$ is positively invariant.

We consider the following more general situation. Let {$U$} be a bounded open
subset of $\mathbb{R}^{N}$ with smooth boundary, and let $m\in\mathbb{N}$. We
consider a decomposition of $\overline{{U}}^{m}$ as follows. Let $S$ be the
set of all functions $\sigma:\{1,...,m\}\rightarrow\{${$U$}$,\partial${$U$%
}$\},$ and define%
\[
\mathcal{Y}_{\sigma}:=\sigma(1)\times\cdot\cdot\cdot\times\sigma
(m)\subset\mathbb{R}^{mN}.
\]
Then
\[
\overline{{U}}^{m}=%
{\textstyle\bigcup\limits_{\sigma\in S}}
\mathcal{Y}_{\sigma},\text{ \ \ \ \ }\partial({U}^{m})=%
{\textstyle\bigcup\limits_{\sigma\in S\smallsetminus\sigma_{{U}}}}
\mathcal{Y}_{\sigma},\text{ \ \ \ \ and \ \ \ \ }\mathcal{Y}_{\sigma}%
\cap\mathcal{Y}_{\zeta}=\emptyset\text{ \ if }\sigma\neq\zeta,
\]
where $\sigma_{{U}}$ stands for the constant function $\sigma_{{U}}(i)=${$U.$}
Note that $\mathcal{Y}_{\sigma}$ is a manifold of dimension $\leq mN.$ We
denote by $T_{\xi}(\mathcal{Y}_{\sigma})$ the tangent space to $\mathcal{Y}%
_{\sigma}$ at the point $\xi\in\mathcal{Y}_{\sigma}.$ The following holds.

\begin{lema}
\label{pertgrad}Let $\mathcal{F}$ be a function of class $\mathcal{C}^{1}$ in
a neighborhood of $\overline{{U}}^{m}\cap\mathcal{F}^{-1}[b,c].$ Assume that%
\begin{equation}
\nabla_{\sigma}\mathcal{F}(\xi)\neq0\text{ \ \ \ for every }\xi\in
\mathcal{Y}_{\sigma}\cap\mathcal{F}^{-1}[b,c]\text{ \ with \ }\sigma\neq
\sigma_{{U}}, \label{tangrad}%
\end{equation}
where $\nabla_{\sigma}\mathcal{F}(\xi)$ is the projection of $\nabla
\mathcal{F}(\xi)$ onto the tangent space $T_{\xi}(\mathcal{Y}_{\sigma}).$ Then
there exists a locally Lipschitz continuous vector field $\chi:\mathcal{U}%
\rightarrow\mathbb{R}^{N},$ defined in an open neighborhood $\mathcal{U}$ of
$\overline{{U}}^{m}\cap\mathcal{F}^{-1}[b,c],$ with the following properties:
For $\xi\in\mathcal{U}$,\newline(i) \ \ $\chi(\xi)=0$ if and only if
$\nabla\mathcal{F}(\xi)=0,$\newline(ii) \ $\chi(\xi)\cdot\nabla\mathcal{F}%
(\xi)>0$ if $\nabla\mathcal{F}(\xi)\neq0,$\newline(iii) $\chi(\xi)\in T_{\xi
}(\mathcal{Y}_{\sigma})$ if $\xi\in\mathcal{Y}_{\sigma}\cap\mathcal{F}%
^{-1}[b,c].$
\end{lema}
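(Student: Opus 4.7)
The plan is to obtain $\chi$ as a modification of $\nabla\mathcal{F}$ that kills the component of $\nabla_{\xi_i}\mathcal{F}$ normal to $\partial U$ exactly when $\xi_i\in\partial U$, implemented through a smooth projection so that local Lipschitz regularity can be recovered by a pseudo-gradient partition-of-unity argument. I would first fix $\delta_{0}>0$ so that $\mathcal{N}:=\{x\in\mathbb{R}^{N}\,:\,\mathrm{dist}(x,\partial U)<\delta_{0}\}$ is a tubular neighborhood of $\partial U$ on which $d(x):=\mathrm{dist}(x,\partial U)$ is smooth and the outward unit normal $\nu$ extends smoothly, and pick a smooth cutoff $\eta:[0,\infty)\to[0,1]$ with $\eta(0)=1$, $\eta(s)<1$ for $s>0$, and $\eta(s)=0$ for $s\geq\delta_{0}$ (for instance $\eta(s)=e^{-s^{2}}\phi(s/\delta_{0})$ with $\phi$ a suitable bump). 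This produces a smooth endomorphism-valued field
\[
\Pi(y):=I-\eta(d(y))\,\nu(y)\otimes\nu(y),\qquad y\in\mathcal{N},
\]
extended by $\Pi(y)=I$ outside $\mathcal{N}$.

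The first candidate is the continuous vector field $\tilde{\chi}(\xi):=(\Pi(\xi_{1})\nabla_{\xi_{1}}\mathcal{F}(\xi),\ldots,\Pi(\xi_{m})\nabla_{\xi_{m}}\mathcal{F}(\xi))$. Property (iii) will be immediate: if $\sigma(i)=\partial U$ at $\xi\in\mathcal{Y}_{\sigma}$ then $d(\xi_{i})=0$, so $\eta=1$ and $\Pi(\xi_{i})$ is exactly the orthogonal projection onto $T_{\xi_{i}}(\partial U)$. For (i) and (ii) I would use the pointwise identity
\[
\tilde{\chi}(\xi)\cdot\nabla\mathcal{F}(\xi)=\sum_{i=1}^{m}\Bigl(\abs{\nabla_{\xi_{i}}\mathcal{F}(\xi)}^{2}-\eta(d(\xi_{i}))\,\abs{\nabla_{\xi_{i}}\mathcal{F}(\xi)\cdot\nu(\xi_{i})}^{2}\Bigr)\ge 0,
\]
whose equality case forces each $\nabla_{\xi_{i}}\mathcal{F}(\xi)$ either to vanish or to be parallel to $\nu(\xi_{i})$ with $\eta(d(\xi_{i}))=1$; since $\eta(s)<1$ for $s>0$, this second alternative requires $\xi_{i}\in\partial U$, and a coordinate-by-coordinate inspection then gives $\nabla_{\sigma}\mathcal{F}(\xi)=0$ for the unique stratum containing $\xi$. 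Hypothesis \eqref{tangrad} forces $\sigma=\sigma_{U}$ and hence $\nabla\mathcal{F}(\xi)=0$, which is exactly what is needed for (i) and (ii).

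Since $\mathcal{F}$ is only $\mathcal{C}^{1}$, $\tilde{\chi}$ is merely continuous, so the final step is to promote it to a locally Lipschitz vector field. Setting $\mathcal{R}:=\{\xi\in\overline{U}^{m}\cap\mathcal{F}^{-1}[b,c]\,:\,\nabla\mathcal{F}(\xi)\neq 0\}$, for each $\xi_{0}\in\mathcal{R}$ I would introduce
\[
w_{\xi_{0}}(\xi):=\bigl(\Pi(\xi_{1})\nabla_{\xi_{1}}\mathcal{F}(\xi_{0}),\ldots,\Pi(\xi_{m})\nabla_{\xi_{m}}\mathcal{F}(\xi_{0})\bigr),
\]
which is smooth (hence locally Lipschitz) in $\xi$, coincides with $\tilde{\chi}(\xi_{0})$ at $\xi_{0}$, and still satisfies $w_{\xi_{0}}(\xi)\in T_{\xi}(\mathcal{Y}_{\sigma})$ whenever $\xi\in\mathcal{Y}_{\sigma}$ by the same tangency argument. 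Continuity of $\nabla\mathcal{F}$ supplies an open neighborhood $V_{\xi_{0}}\subset\mathcal{R}$ on which $w_{\xi_{0}}(\xi)\cdot\nabla\mathcal{F}(\xi)>0$. Then I would extract a locally finite subcover of $\mathcal{R}$, take a subordinate locally Lipschitz partition of unity $\{\rho_{j}\}$, and define $\chi:=\sum_{j}\rho_{j}\,w_{\xi_{0}^{j}}$, extended by zero on the critical set. Since each $T_{\xi}(\mathcal{Y}_{\sigma})$ is a linear subspace, convex combinations preserve (iii), while (i) and (ii) follow from the pointwise positivity of each $w_{\xi_{0}^{j}}\cdot\nabla\mathcal{F}$ on $V_{\xi_{0}^{j}}$.

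The main difficulty is to reconcile (iii) with the Lipschitz regularity: a naive partition of unity built from the constant vectors $\tilde{\chi}(\xi_{0})$ would break (iii) at nearby boundary points, since $T_{\xi_{i}}(\partial U)$ moves with $\xi_{i}$. The smoothly varying projection $\Pi$ is the key device, as it automatically readjusts each local pseudo-gradient to the moving tangent space, allowing the standard partition-of-unity upgrade to preserve the stratified tangency.
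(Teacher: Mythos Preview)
Your argument is correct. The paper proceeds differently: it indexes the partition of unity by the $2^{m}$ strata $\sigma\in S$ rather than by points. For each $\sigma$ it fixes a tubular neighborhood $\mathcal{U}_{\sigma}$ of $\mathcal{Y}_{\sigma}$ with a smooth retraction $r_{\sigma}:\mathcal{U}_{\sigma}\to\mathcal{Y}_{\sigma}$, takes $\nabla_{\sigma}\mathcal{F}(r_{\sigma}(\xi))$ as the local field, checks by continuity from \eqref{tangrad} that this pairs positively with $\nabla\mathcal{F}(\xi)$ on a slightly enlarged level band, and then glues through a subordinate locally Lipschitz partition $\{\pi_{\sigma}\}$. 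Property~(iii) follows from the combinatorial observation that if $\xi\in\mathcal{Y}_{\tau}$ lies in $\mathcal{U}_{\sigma}$ then every boundary coordinate of $\tau$ is already a boundary coordinate of $\sigma$, so $r_{\sigma}(\xi)_{i}=\xi_{i}$ whenever $\tau(i)=\partial U$ and each summand lands in $T_{\xi}(\mathcal{Y}_{\tau})$. The paper's scheme is more concise (finitely many charts, no pseudo-gradient cover of the regular set), whereas your smoothly varying endomorphism $\Pi$ together with the frozen gradients $\nabla\mathcal{F}(\xi_{0})$ produces genuinely smooth local fields $w_{\xi_{0}}$, so the local Lipschitz property of $\chi$ follows directly from the sole assumption $\mathcal{F}\in\mathcal{C}^{1}$; in the paper's version the local fields $\nabla_{\sigma}\mathcal{F}\circ r_{\sigma}$ are only as regular as $\nabla\mathcal{F}$ itself, so the Lipschitz claim tacitly uses a little extra smoothness, which is of course available in the intended application.
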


\begin{proof}
Let $\mathcal{N}_{\alpha}:=\{x\in\mathbb{R}^{N}:{\mbox {dist}}(x,\partial$%
{$U$}$)<\alpha\}.$ Fix $\alpha>0$ small enough so that there exists a smooth
retraction $r:\mathcal{N}_{\alpha}\rightarrow\partial${$U$}$.$ For every
$\sigma\in S,$ let $\widehat{\sigma}:\{1,...,m\}\rightarrow\{${$U$}%
$,\partial\mathcal{N}_{\alpha}\}$ be the function $\widehat{\sigma}%
(i)=\sigma(i)$ if $\sigma(i)=U$ and $\widehat{\sigma}(i)=\mathcal{N}_{\alpha}$
if $\sigma(i)=\partial${$U$}$.$ Set%
\[
\mathcal{U}_{\sigma}:=\widehat{\sigma}(1)\times\cdot\cdot\cdot\times
\widehat{\sigma}(m).
\]
Then $\mathcal{U}_{\sigma}$ is an open neighborhood of $\mathcal{Y}_{\sigma}.$
Let $r_{\sigma}:\mathcal{U}_{\sigma}\rightarrow\mathcal{Y}_{\sigma}$ be the
obvious retraction$.$ Assumption (\ref{tangrad}) implies that $\mathcal{F}$
has no critical points on $\partial(${$U$}$^{m})\cap\mathcal{F}^{-1}[b,c]$
and, moreover, that
\[
\nabla_{\sigma}\mathcal{F}(\xi)\cdot\nabla\mathcal{F}(\xi)>0\text{ \ \ \ if
}\xi\in\mathcal{Y}_{\sigma}\cap\mathcal{F}^{-1}[b,c]\text{ and }%
\nabla\mathcal{F}(\xi)\neq0.
\]
So taking $\alpha$ even smaller if necessary, we may assume that $\mathcal{F}$
has no critical points in $\mathcal{U}_{\sigma}\cap\mathcal{F}^{-1}[b,c] $
if\ $\sigma\neq\sigma_{{U}},$ and that%
\[
\nabla_{\sigma}\mathcal{F}(r_{\sigma}(\xi))\cdot\nabla\mathcal{F}(\xi)>0\text{
\ \ \ if }\xi\in\mathcal{U}_{\sigma}\cap\mathcal{F}^{-1}(b-\alpha
,c+\alpha)\text{ and }\nabla\mathcal{F}(\xi)\neq0.
\]
Let $\{\pi_{\sigma}:\sigma\in S\}$ be a locally Lipschitz partition of unity
subordinated to the open cover $\{\mathcal{U}_{\sigma}:\sigma\in S\}. $ Define%
\[
\chi(\xi):=\sum_{\sigma\in S}\pi_{\sigma}(\xi)\nabla_{\sigma}\mathcal{F}%
(r_{\sigma}(\xi)),\text{ \ \ \ \ \ }\xi\in\mathcal{U}:=%
{\textstyle\bigcup\limits_{\sigma\in S}}
\mathcal{U}_{\sigma}\cap\mathcal{F}^{-1}(b-\alpha,c+\alpha).
\]
One can easily verify that $\chi$ has the desired properties.
\end{proof}

As usual, set $\mathcal{F}^{c}:=\{\xi\in$ dom$\mathcal{F}:\mathcal{F}(\xi)\leq
c\}$.

\begin{lema}
[Deformation lemma]\label{def}Let $\mathcal{F}$ be a function of class
$\mathcal{C}^{1}$ in a neighborhood of $\overline{{U}}^{m}\cap\mathcal{F}%
^{-1}[b,c].$ Assume that%
\[
\nabla_{\sigma}\mathcal{F}(\xi)\neq0\text{ \ \ \ for every }\xi\in
\mathcal{Y}_{\sigma}\cap\mathcal{F}^{-1}[b,c]\text{ \ with \ }\sigma\neq
\sigma_{{U}}.
\]
If $\mathcal{F}$ has no critical points in {$U$}$^{m}\cap\mathcal{F}%
^{-1}[b,c],$ then there exists a continuous deformation $\widetilde{\eta
}:[0,1]\times(\overline{{U}}^{m}\cap\mathcal{F}^{c})\rightarrow\overline{{U}%
}^{m}\cap\mathcal{F}^{c}$such that
\begin{align*}
\widetilde{\eta}(0,\xi)  &  =\xi\text{ \ for all }\xi\in\overline{{U}}^{m}%
\cap\mathcal{F}^{c},\\
\widetilde{\eta}(s,\xi)  &  =\xi\text{ \ for all }(s,\xi)\in\lbrack
0,1]\times(\overline{{U}}^{m}\cap\mathcal{F}^{b}),\\
\widetilde{\eta}(1,\xi)  &  \in\overline{{U}}^{m}\cap\mathcal{F}^{b}\text{
\ for all }\xi\in\overline{{U}}^{m}\cap\mathcal{F}^{c}.
\end{align*}

\end{lema}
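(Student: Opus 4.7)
The approach is to build the deformation as the flow of a suitably normalized vector field derived from the pseudo-gradient $\chi$ produced by Lemma \ref{pertgrad}, truncated so that points already below $b$ do not move. The key non-standard aspect is that the flow must stay inside $\overline{U}^m$, which will be read off from property (iii) of $\chi$ (tangency to each stratum $\mathcal{Y}_\sigma$).

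I would first invoke Lemma \ref{pertgrad} to obtain the locally Lipschitz vector field $\chi$ on an open neighborhood of the compact set $K:=\overline{U}^m\cap\mathcal{F}^{-1}[b,c]$. The hypothesis that $\mathcal{F}$ has no critical points in ${U}^m \cap \mathcal{F}^{-1}[b,c]$, combined with the tangential nondegeneracy $\nabla_\sigma\mathcal{F}\neq 0$ assumed on the other strata, forces $\chi\neq 0$ throughout $K$ by property (i). Property (ii) then gives $\chi\cdot\nabla\mathcal{F}\geq\delta>0$ on a smaller open neighborhood $\mathcal{V}$ of $K$. I would then normalize
\[
X(\xi):=-\kappa(\xi)\,\rho(\mathcal{F}(\xi))\,\frac{\chi(\xi)}{\chi(\xi)\cdot\nabla\mathcal{F}(\xi)},
\]
where $\rho:\R\to[0,1]$ is smooth with $\rho\equiv 1$ on $[b,c]$ and supported in a slightly larger interval, and $\kappa$ is a spatial cutoff equal to $1$ on $K$ and supported in $\mathcal{V}$; extend $X$ by zero outside. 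The resulting $X$ is locally Lipschitz and bounded, so the ODE $\dot\gamma=X(\gamma)$ generates a global continuous flow $\varphi_t$, along which $\frac{d}{dt}\mathcal{F}(\varphi_t(\xi))=-1$ whenever $\varphi_t(\xi)\in K$.

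The main obstacle is verifying that $\overline{U}^m$ is positively invariant under $\varphi_t$. I would argue coordinate by coordinate: for each $i=1,\dots,m$ let $d_i(\xi)$ be the signed outward distance from $\xi_i$ to $\partial U$ (positive outside $\overline{U}$, nonpositive inside, zero on $\partial U$). Property (iii) of $\chi$ says $\chi(\xi)_i\in T_{\xi_i}(\partial U)$ whenever $\xi_i\in\partial U$, equivalently $\nabla d_i\cdot X_i=0$ on $\{d_i=0\}$. Lipschitz continuity of $X$ then yields $\left|\frac{d}{dt}d_i(\varphi_t(\xi))\right|\leq L\,|d_i(\varphi_t(\xi))|$ in a tubular neighborhood of $\partial U$ in the $i$-th coordinate, and a Gronwall argument shows that $d_i$ cannot change sign along a flow line starting with $d_i(\xi)\leq 0$. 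Doing this in each coordinate gives $\varphi_t(\overline{U}^m)\subset\overline{U}^m$ for all $t\geq 0$.

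Finally I would define
\[
\widetilde{\eta}(s,\xi):=\varphi_{\min(s(c-b),\,\tau(\xi))}(\xi),\qquad \tau(\xi):=\max(\mathcal{F}(\xi)-b,\,0),
\]
and verify the three required properties. Clearly $\widetilde{\eta}(0,\xi)=\xi$; for $\xi\in\mathcal{F}^b$ we have $\tau(\xi)=0$, so $\widetilde{\eta}(s,\xi)\equiv\xi$ for every $s$; and at $s=1$, either $\mathcal{F}(\xi)\leq b$ (nothing moves, so $\widetilde{\eta}(1,\xi)=\xi\in\mathcal{F}^b$) or $\mathcal{F}(\xi)\in(b,c]$ with $\tau(\xi)=\mathcal{F}(\xi)-b\leq c-b$, in which case $\mathcal{F}(\widetilde{\eta}(1,\xi))=b$ since $\mathcal{F}\circ\varphi_t$ decreases at unit rate while the flow is in $K$. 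Monotonicity of $\mathcal{F}\circ\varphi_t$ keeps values in $\mathcal{F}^c$, the invariance step keeps them in $\overline{U}^m$, and continuity of $\widetilde{\eta}$ is inherited from that of $\varphi$ and $\tau$.
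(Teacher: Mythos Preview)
Your overall plan coincides with the paper's: invoke Lemma~\ref{pertgrad}, flow along the resulting pseudo-gradient, use property~(iii) to keep the trajectories in $\overline{U}^m$, and reparametrize so that everything lands in $\mathcal{F}^b$ at $s=1$. The paper does exactly this, with one difference: it flows along $-\chi$ itself and then reparametrizes by the entrance time $t_b(\xi):=\inf\{t\ge 0:\mathcal{F}(\eta(t,\xi))\le b\}$, rather than normalizing the field.

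That difference matters for your write-up. Your normalized field
\[
X(\xi)=-\kappa(\xi)\,\rho(\mathcal{F}(\xi))\,\frac{\chi(\xi)}{\chi(\xi)\cdot\nabla\mathcal{F}(\xi)}
\]
contains $\nabla\mathcal{F}$, and under the stated hypothesis $\mathcal{F}\in\mathcal{C}^1$ the map $\nabla\mathcal{F}$ is only continuous. Hence $X$ need not be locally Lipschitz. This undermines two steps you rely on: uniqueness (hence continuous dependence) of the flow $\varphi_t$, and the Gronwall bound $\bigl|\tfrac{d}{dt}d_i(\varphi_t(\xi))\bigr|\le L\,|d_i(\varphi_t(\xi))|$, which needs a Lipschitz estimate on $X$ near $\{d_i=0\}$, not merely the vanishing of $X_i\cdot\nabla d_i$ there. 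The paper avoids this by flowing with $-\chi$ directly (Lipschitz by Lemma~\ref{pertgrad}); property~(iii) then gives tangency of a Lipschitz field to each stratum $\mathcal{Y}_\sigma$, and a standard Nagumo/Gronwall argument (the one you sketch) is legitimate for $-\chi$. The price is that the hitting time $t_b(\xi)$ is no longer an explicit algebraic expression in $\mathcal{F}(\xi)$; its finiteness and continuity follow from compactness of $\overline{U}^m\cap\mathcal{F}^{-1}[b,c]$, the absence of zeros of $\chi$ there, and property~(ii). If you drop the normalization and use the entrance-time reparametrization, your argument goes through and is essentially the paper's.
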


\begin{proof}
Let $\chi:\mathcal{U}\rightarrow\mathbb{R}^{N}$ be as in Lemma \ref{pertgrad}
and consider the flow $\eta$ defined by%
\begin{equation}
\left\{
\begin{array}
[c]{l}%
\frac{\partial}{\partial t}\eta(t,\xi)=-\chi(\eta(t,\xi)),\\
\text{ \ \ }\eta(0,\xi)=\xi,
\end{array}
\right.  \label{flow}%
\end{equation}
for $\xi\in\mathcal{U}$ and $t\in\lbrack0,t^{+}(\xi))$, where $t^{+}(\xi)$ is
the maximal existence time of the trajectory $t\mapsto\eta(t,\xi)$ in
$\mathcal{U}.$ For each $\xi\in\mathcal{U},$ let
\[
t_{b}(\xi):=\inf\{t\geq0:\mathcal{F}(\eta(t,\xi))\leq b\}\in\lbrack0,\infty]
\]
be the entrance time into the sublevel set $\mathcal{F}^{b}.$ Property
\emph{(ii)} in Lemma \ref{pertgrad} implies that%
\[
\frac{d}{dt}\mathcal{F}(\eta(t,\xi))=-\nabla\mathcal{F}(\eta(t,\xi))\cdot
\chi(\eta(t,\xi))\leq0,
\]
therefore $\mathcal{F}(\eta(t,\xi))$ is nonincreasing in $t.$ This, together
with $\emph{(iii)}$\ in Lemma \ref{pertgrad} yields%
\[
\eta(t,\xi)\in\overline{{U}}^{m}\cap\mathcal{F}^{-1}[b,c]\text{ \ \ if }\xi
\in\overline{{U}}^{m}\cap\mathcal{F}^{-1}[b,c]\text{ and }t\in\lbrack
0,t_{b}(\xi)].
\]
Since $\mathcal{F}$ has no critical points in {$U$}$^{m}\cap\mathcal{F}%
^{-1}[b,c],$ we have that $t_{b}(\xi)<\infty$ for every $\xi\in\overline{{U}%
}^{m}\cap\mathcal{F}^{-1}[b,c],$ and the entrance time map $t_{b}%
:\overline{{U}}^{m}\cap\mathcal{F}^{c}\cap\mathcal{U}\rightarrow
\lbrack0,\infty)$ is continuous. It follows that the map%
\[
\widetilde{\eta}:[0,1]\times(\overline{{U}}^{m}\cap\mathcal{F}^{c}%
)\rightarrow\overline{{U}}^{m}\cap\mathcal{F}^{c}%
\]
given by%
\[
\widetilde{\eta}(s,\xi):=\left\{
\begin{array}
[c]{ll}%
\eta(st_{b}(\xi),\xi) & \text{if }\xi\in(\overline{{U}}^{m}\cap\mathcal{F}%
^{c})\cap\mathcal{U}\\
\xi & \text{if }\xi\in\overline{{U}}^{m}\cap\mathcal{F}^{b}%
\end{array}
\right.
\]
is a continuous deformation of $\overline{{U}}^{m}\cap\mathcal{F}^{c}$ into
$\overline{{U}}^{m}\cap\mathcal{F}^{b}$ which leaves $\overline{{U}}^{m}%
\cap\mathcal{F}^{b}$ fixed, as claimed.
\end{proof}

\medskip\noindent\textbf{Proof of Theorem \ref{teo1}}. Fix $\delta_{1}$ small
enough so that the inclusions
\begin{equation}
{\mathfrak{D}}_{\delta_{1}}\hookrightarrow\Omega^{m}\text{ \ \ \ \ and
\ \ \ \ }{\mathfrak{D}}_{\delta_{1}}\cap D\hookrightarrow B_{\delta_{1}%
}(D):=\{x\in\Omega^{m}\,:\,{\mbox
{dist}}(x,D)\leq\delta_{1}\} \label{d1}%
\end{equation}
are homotopy equivalences, where $D:=\{\xi\in\Omega^{m}\,:\,\xi_{i}=\xi
_{j}{\mbox { for some }}i\not =j\}$. Since $\varphi_{m}$ is bounded above on
{$\mathfrak{D}$}$_{\delta_{1}}$ and bounded below on $\Omega^{m}\smallsetminus
B_{\delta_{1}}(D),$ we may choose $b_{0},c_{0}>0$ such that
\[
{\mathfrak{D}}_{\delta_{1}}\subset\varphi_{m}^{c_{0}}\text{ \ \ \ \ and
\ \ \ \ }\varphi_{m}^{b_{0}}\subset B_{\delta_{1}}(D).
\]
Fix $K>\max\{-b_{0},c_{0}\}$ and, for this $K,$ fix $\delta\in(0,\delta_{1})$
as in Lemma \ref{bdryflow}. By property (\ref{due}), for each $\varepsilon$
small enough, there exist $b<c$ such that
\[
\varphi_{m}^{c_{0}}\subset\tilde{\mathcal{F}}_{\varepsilon}^{c}\subset
\varphi_{m}^{K}\text{, \ \ \ \ }\varphi_{m}^{-K}\subset\tilde{\mathcal{F}%
}_{\varepsilon}^{b}\subset\varphi_{m}^{b_{0}},
\]
and such that, for every $\xi=(\xi_{1},...,\xi_{m})\in\partial${$\mathfrak{D}%
$}$_{\delta}$ with $\tilde{\mathcal{F}}_{\varepsilon}(\xi)\in\lbrack b,c]$
there is an $i\in\{1,...,m\}$ with%
\[%
\begin{array}
[c]{ll}%
\nabla_{\xi_{i}}\tilde{\mathcal{F}}_{\varepsilon}(\xi)\not =0 & \text{if }%
\xi_{i}\in\Omega_{\delta}\\
\nabla_{\xi_{i}}\tilde{\mathcal{F}}_{\varepsilon}(\xi)\cdot\tau\not =0\text{
\ for some }\tau\in T_{\xi_{i}}(\partial\Omega_{\delta}) & \text{if }\xi
_{i}\in\partial\Omega_{\delta}.
\end{array}
\]
We wish to prove that $\tilde{\mathcal{F}}_{\varepsilon}$ has a critical point
in $\mathfrak{D}_{\delta}\cap\tilde{\mathcal{F}}_{\varepsilon}^{-1}[b,c].$ We
argue by contradiction: Assume that $\tilde{\mathcal{F}}_{\varepsilon}$ has no
critical points in $\mathfrak{D}_{\delta}\cap\tilde{\mathcal{F}}_{\varepsilon
}^{-1}[b,c].$ Then Lemma \ref{def} gives a continuous deformation%
\[
\widetilde{\eta}:[0,1]\times(\overline{{\mathfrak{D}}}_{\delta}\cap
\tilde{\mathcal{F}}_{\varepsilon}^{c})\rightarrow\overline{{\mathfrak{D}}%
}_{\delta}\cap\tilde{\mathcal{F}}_{\varepsilon}^{c}%
\]
of $\overline{{\mathfrak{D}}}_{\delta}\cap\tilde{\mathcal{F}}_{\varepsilon
}^{c}$ into $\overline{{\mathfrak{D}}}_{\delta}\cap\tilde{\mathcal{F}%
}_{\varepsilon}^{b}$ which keeps $\overline{{\mathfrak{D}}}_{\delta}\cap
\tilde{\mathcal{F}}_{\varepsilon}^{b}$ fixed. Our choices of $b$ and $c$ imply
that $\mathfrak{D}_{\delta_{1}}\subset\overline{{\mathfrak{D}}}_{\delta}%
\cap\tilde{\mathcal{F}}_{\varepsilon}^{c}$ and $\widetilde{\eta}$ induces a
deformation of $\mathfrak{D}_{\delta_{1}}$ into $\overline{{\mathfrak{D}}%
}_{\delta}\cap\tilde{\mathcal{F}}_{\varepsilon}^{b}\subset B_{\delta_{1}}(D),$
which keeps the diagonal $D$ fixed$.$ Consequently, the homomorphism
\[
\iota^{\ast}:H^{\ast}(\Omega^{m},B_{\delta_{1}}(D))\rightarrow H^{\ast
}(\mathfrak{D}_{\delta_{1}},\mathfrak{D}_{\delta_{1}}\cap D),
\]
induced by th inclusion map $\iota:\mathfrak{D}_{\delta_{1}}\hookrightarrow
\Omega^{m},$ factors through $H^{\ast}(B_{\delta_{1}}(D),B_{\delta_{1}%
}(D))=0.$ Hence, $\iota^{\ast}$ is the zero homomorphism. On the other hand,
our choice (\ref{d1}) of $\delta_{1}$ implies that $\iota^{\ast}$ is an
isomorphism. Therefore, $H^{\ast}(\Omega^{m},B_{\delta_{1}}(D))=H^{\ast
}(\Omega^{m},D)=0.$ But, by assumption, $H^{d}(\Omega)\neq0$ for some
$d\geq1.$ If we choose $d$ so that $H^{j}(\Omega)=0$ for $j>d,$ then Lemma
\ref{monica} asserts that $H^{md}(\Omega^{m},D)\not =0.$ This is a
contradiction. Consequently, $\tilde{\mathcal{F}}_{\varepsilon}$ must have
critical point in $\mathfrak{D}_{\delta}\cap\tilde{\mathcal{F}}_{\varepsilon
}^{-1}[b,c],$ as claimed. \qed\noindent

\subsection{Proof of Theorem \ref{teo2}}

Assume that there exist an open subset $U$ of $\Omega$ with smooth boundary,
compactly contained in $\Omega,$ and two closed subsets $B_{0}\subset B$ of
$U^{m},$ which satisfy conditions \textbf{P1)} and \textbf{P2)} stated in
Section 1. By property (\ref{due}), for $\varepsilon$ small enough,
$\tilde{\mathcal{F}}_{\varepsilon}$ satisfies those conditions too, that is,%
\[
b_{\varepsilon}:=\sup_{\xi\in B_{0}}\tilde{\mathcal{F}}_{\varepsilon}%
(\xi)<\inf_{\gamma\in\Gamma}\sup_{\xi\in B}\tilde{\mathcal{F}}_{\varepsilon
}(\gamma(\xi))=:c_{\varepsilon},
\]
where $\Gamma:=\{\gamma\in\mathcal{C}(B,\overline{U}^{m}):\gamma(\xi)=\xi$ for
every $\xi\in B_{0}\}$ and, for every $\xi=(\xi_{1},...,\xi_{m})\in\partial
U^{m}$ with $\tilde{\mathcal{F}}_{\varepsilon}(\xi)\in\lbrack c_{\varepsilon
}-\alpha,c_{\varepsilon}+\alpha]$, $\alpha\in(0,c_{\varepsilon}-b_{\varepsilon
})$ small enough, one has that%
\[%
\begin{array}
[c]{ll}%
\nabla_{\xi_{i}}\tilde{\mathcal{F}}_{\varepsilon}(\xi)\not =0 & \text{if }%
\xi_{i}\in U\\
\nabla_{\xi_{i}}\tilde{\mathcal{F}}_{\varepsilon}(\xi)\cdot\tau\not =0\text{
\ for some }\tau\in T_{\xi_{i}}(\partial U) & \text{if }\xi_{i}\in\partial U,
\end{array}
\]
for some $i\in\{1,...,m\}$. If $\tilde{\mathcal{F}}_{\varepsilon}$ has no
critical points in $U^{m}\cap\tilde{\mathcal{F}}_{\varepsilon}^{-1}%
[c_{\varepsilon}-\alpha,c_{\varepsilon}+\alpha],$ then Lemma \ref{def} gives a
continuous deformation%
\[
\widetilde{\eta}:[0,1]\times(\overline{U}^{m}\cap\tilde{\mathcal{F}%
}_{\varepsilon}^{c_{\varepsilon}+\alpha})\rightarrow\overline{U}^{m}\cap
\tilde{\mathcal{F}}_{\varepsilon}^{c_{\varepsilon}+\alpha}%
\]
of $\overline{U}^{m}\cap\tilde{\mathcal{F}}_{\varepsilon}^{c_{\varepsilon
}+\alpha}$ into $\overline{U}^{m}\cap\tilde{\mathcal{F}}_{\varepsilon
}^{c_{\varepsilon}-\alpha}$ which keeps $\overline{U}^{m}\cap\tilde
{\mathcal{F}}_{\varepsilon}^{c_{\varepsilon}-\alpha}$ fixed. Let $\gamma
\in\Gamma$ be such that $\tilde{\mathcal{F}}_{\varepsilon}(\gamma(\xi))\leq
c_{\varepsilon}+\alpha$ for every $\xi\in B.$ Since $b_{\varepsilon
}<c_{\varepsilon}-\alpha,$ the map $\widetilde{\gamma}(\xi):=\widetilde{\eta
}(1,\gamma(\xi))$ belongs to $\Gamma.$ But $\tilde{\mathcal{F}}_{\varepsilon
}(\widetilde{\gamma}(\xi))\leq c_{\varepsilon}-\alpha$ for every $\xi\in B,$
contradicting the definition of $c_{\varepsilon}.$ Therefore, $c_{\varepsilon
}$ is a critical value of $\tilde{\mathcal{F}}_{\varepsilon}.$ \qed\noindent

\medskip

\bigskip\bigskip\centerline{\bf Acknowledgement}

\medskip This work has been partly supported by grants Fondecyt 1040936,
Chile, and by grants CONACYT 43724\ and PAPIIT IN105106, Mexico.

\bigskip

\end{document}